\DeclareMathOperator{\op}{op}
\newcommand{\B}{{\mathbb B}}
\newcommand{\C}{{\mathbb C}}
\newcommand{\N}{{\mathbb N}}
\newcommand{\R}{{\mathbb R}}
\newcommand{\cC}{{\mathcal C}}
\newcommand{\cD}{{\mathcal D}}
\newcommand{\cE}{{\mathcal E}}
\newcommand{\cF}{{\mathcal F}}
\newcommand{\cH}{{\mathcal H}}
\newcommand{\cK}{{\mathcal K}}
\newcommand{\cL}{{\mathcal L}}
\newcommand{\cO}{{\mathcal O}}
\newcommand{\cP}{{\mathcal P}}
\newcommand{\cS}{{\mathcal S}}
\newcommand{\ga}{\alpha}
\newcommand{\gb}{\beta}
\renewcommand{\gg}{\gamma}
\newcommand{\gG}{\Gamma}
\newcommand{\gd}{\delta}
\newcommand{\gD}{\Delta}
\newcommand{\gve}{\varepsilon}
\newcommand{\gl}{\lambda}
\newcommand{\gL}{\Lambda}
\newcommand{\go}{\omega}
\newcommand{\gO}{\Omega}
\newcommand{\gt}{\theta}
\newcommand{\gs}{\sigma}
\newcommand{\Proof}[1]{{\em Proof}. #1~\hfill$\Box$\medskip}
\renewcommand{\Re}{\mathop{\rm Re}}
\renewcommand{\Im}{\mathop{\rm Im}}
\newtheorem{thm}{Theorem}[section]
\newtheorem{proposition}[thm]{Proposition}
\newtheorem{corollary}[thm]{Corollary}
\newtheorem{definition}[thm]{Definition}
\newtheorem{theorem}[thm]{Theorem}
\newtheorem{lemma}[thm]{Lemma}
\newtheorem{remark}[thm]{Remark}
\begin{document}
\title[Bounded Imaginary Powers of Cone Differential Operators]%
{Bounded Imaginary Powers of Cone Differential Operators\\ on Higher Order 
Mellin-Sobolev Spaces \\
and Applications to the Cahn-Hilliard Equation}
\author{Nikolaos Roidos}
\author{Elmar Schrohe}
\address{Institut für Analysis, Leibniz Universität Hannover, Welfengarten 1, 
30167 Hannover, Germany}
\email{roidos@math.uni-hannover.de, schrohe@math.uni-hannover.de}

\begin{abstract}
Extending earlier results on the existence of bounded imaginary powers for cone differential operators on weighted $L^p$-spaces $\cH^{0,\gg}_p(\B)$ over a manifold with conical 
singularities, we show how the same assumptions also yield the existence of bounded imaginary powers on higher order Mellin-Sobolev spaces $\cH^{s,\gg}_p(\B)$, $s\ge0$. 

As an application we consider the Cahn-Hilliard equation on a manifold with (possibly warped) conical singularities. 
Relying on our work for the case of straight cones, we first establish $R$-sectoriality (and thus maximal regularity) for the linearized equation and then deduce the existence of a short time solution with the help of a theorem by Clément and Li. We also obtain the short time asymptotics of the solution near the conical point.
\end{abstract}
\subjclass[2000]{35J70,35K59,58J40}
\date{\today}

\maketitle

\section{Introduction}
In this article we show the existence of bounded imaginary powers for a class of elliptic differential operators on higher order Mellin-Sobolev spaces over manifolds with conical singularities and use this to establish the existence of
maximally regular solutions to the Cahn-Hilliard equation.

We model the underlying manifold with conical singularities by a smooth manifold $\B$ with boundary, of dimension $n+1$, $n\ge1$, endowed with a conically degenerate Riemannian metric. 
In a collar neighborhood of the boundary, we choose coordinates $(x,y)$ with $0\le x<1$ and
$y\in \partial \B$. The metric then is assumed to be of the form 
\begin{eqnarray}\label{e2}
g = dx^2+ x^2 h(x),
\end{eqnarray}
where $x\mapsto h(x)$ is a smooth family of Riemannian metrics on the cross-section,
non-degenerate up to $x=0$. 
The cone is straight, if this family is constant near $x=0$; otherwise it is warped. 

We call a differential operator $A$ of order $\mu$ on the interior $\B^\circ$ of $\B$ a
cone differential operator or conically degenerate, if, in local coordinates near the boundary, it can be written in the form 
\begin{eqnarray}\label{conediffop}
A=x^{-\mu}\sum_{j=0}^\mu a_j(x,y,D_y) (-x\partial_x)^j
\end{eqnarray} 
with $x\mapsto a_j(x,y,D_y)$ smooth families, up to $x=0$, of differential operators of order 
$\mu-j$ on the cross-section $\partial \B$. 
It is degenerate elliptic or shortly $\B$-elliptic, if the principal pseudodifferential symbol $\gs^\mu_\psi(A)$ is invertible on $T^*\B^\circ$ and, moreover, in local coordinates 
$(x,y)$ near the boundary and corresponding covariables 
$(\xi,\eta)$, the rescaled principal symbol  
\begin{eqnarray}\label{rps}
x^{\mu}\gs^\mu_\psi(A)(x,y,\xi/x,\eta) 
= \sum_{j=0}^\mu\gs^{\mu-j}_\psi a_j(x,y,\eta)(-i\xi)^j
\end{eqnarray}
is invertible up to $x=0$.

The Laplace-Beltrami operator with respect to the metric $g$ above is an elliptic cone differential operator. 
In fact, a short computation shows that, in local coordinates near the boundary, it is of the form 
\begin{eqnarray}\label{Delta}
\Delta = 
x^{-2}\left((x\partial_x)^2  
- (n-1+H(x))(-x\partial_x )
+ \Delta_{h(x)}
 \right),
\end{eqnarray}
where $\Delta_{h(x)}$ is the Laplace-Beltrami operator on the cross-section with respect to the metric $h(x)$ and $H(x) = x\partial_x (\det h(x))/(2\det h(x))$.

Conically degenerate operators act in a natural way on scales of weighted Mellin-Sobolev spaces 
$\cH^{s,\gg}_p(\B)$, $s,\gg\in\R$, $1<p<\infty$, cf.\ Section \ref{MSS}. 
For $s\in \N_0$, $\cH^{s,\gg}_p(\B)$ is the space of all functions $u$ in $H^s_{loc}(\B^\circ)$
such that, near the boundary, 
\begin{eqnarray}\label{measure}
x^{\frac{n+1}2-\gamma}(x\partial_x)^j\partial_y^{\alpha}u(x,y)
\in L^p\Big([0,1]\times \partial \B, \sqrt{\det h(x)}\,\frac{dx}xdy\Big),\quad j+|\ga|\le s.
\end{eqnarray}
In particular, $\cH^{0,\gg}_p(\B)$ simply is a weighted $L^p$-space. 
The factor $\sqrt{\det h(x)}$ could be omitted in the definition of the spaces; 
it will be important only for questions of symmetry of the (warped) Laplacian. 

Sometimes it is necessary to consider a cone differential operator $A$ as an unbounded operator in a fixed space $\cH^{s,\gg}_p(\B)$. 
If $A$ is $\B$-elliptic, it turns out that, under a mild additional assumption, the domain of the minimal extension, i.e. the closure of $A$ considered as an operator on $C^\infty_c(\B^\circ)$, is 
$$\cD(A_{\min,s})= \cH^{s+\mu,\gg+\mu}_p(\B),$$ 
while the domain of the maximal extension is
$$\cD(A_{\max,s}) = \{u\in \cH^{s,\gg}_p(\B):  Au\in \cH^{s,\gg}_p(\B)\} = 
\cD(A_{\min,s}) \oplus \cE,$$
where $\cE$ is a finite-dimensional space consisting of linear combinations of functions of the form $x^{-\rho}\log^k x \, c(y)$ with $\rho\in \C$, $k\in \N_0$ and a smooth function $c$ on the cross-section. It can be chosen independent of $s$. This result has a long history, see e.g. \cite{BrueningSeeley88}, \cite{Le},  \cite{Sh},  \cite{se}; the present version is due to Gil, Krainer and Mendoza \cite{GKM}.
 
A densely defined unbounded operator $A$ on a Banach space $X_0$ is said to have bounded imaginary powers with angle $\phi\ge0$, 
provided that 
(i) its resolvent exists in a closed sector $\gL_\theta$ of angle $\theta$ around the negative real axis and decays there like $\gl^{-1}$ as $\gl\to\infty$, and (ii) the purely imaginary powers $A^{it}$, $t\in \R$, satisfy the estimate
$$\|A^{it}\|_{\cL(X_0)} \le M e^{\phi |t|}$$
for a suitable constant  $M$. 

It was shown in \cite{CSS2}, that $\B$-elliptic cone differential operators have bounded imaginary powers on $\cH^{0,\gg}_p(\B)$, provided their resolvent exists in a sector $\gL_\theta$ as above and has a certain structure. For details see Theorem 2 in connection with Remark 6 in the cited article.  

In the present paper, we show that the same assumptions 
on the structure of the resolvent also guarantee the existence of bounded imaginary powers on higher order Mellin-Sobolev spaces $\cH^{s,\gg}_p(\B)$, $s\ge0$.

This comes as a small surprise. Of course, the boundedness of imaginary powers of 
elliptic pseudodifferential operators on $L^p(M)$, where $M$ is a closed manifold, carries over to all Sobolev spaces $H_p^s(M)$, $s\in \R$. 
But already boundary value problems exhibit a
more complicated behavior. Seeley \cite{See} proved the existence of bounded
imaginary powers for certain elliptic boundary value problems in $L^p$.  
A direct computation, cf.\ Nesensohn  \cite{Nes}, however, shows that
the resolvent to the Dirichlet-Laplacian $\gD_{\text{Dir}}$, considered as an unbounded operator in the Sobolev space $H^1_p(\gO)$ over a bounded domain $\gO$ with domain 
$\cD(\gD_{\text{Dir}})=\{u\in H^3_p(\gO): u|_{\partial \gO}=0\}$, decays only as 
$|\gl|^{-1/2-1/(2p)}$ as $|\gl|\to\infty$, so that already the most basic condition for 
the boundedness of imaginary powers is violated. 
See Denk and Dreher \cite{DD} for a careful analysis of the 
resolvent decay for boundary value problems.
 
For cone differential operators, Gil, Krainer and Mendoza studied the resolvent decay on higher order Mellin-Sobolev spaces. 
In  \cite[Theorem 6.36]{GKM}, they showed that,
under certain conditions implying existence and $O(|\gl|^{-1})$-decay of the resolvent on 
$\cH^{0,\gg}_2(\B)$ for $\gl$ in a sector of the complex plane,  the resolvent also exists on $\cH^{s,\gg}_2(\B)$. 
Moreover, it is $O(|\gl|^{M(s)})$ as $|\gl|\to\infty$ in the sector, with a function $M(s)$ of at most polynomial growth. 

It was therefore not to be expected that cone differential operators satisfying the assumptions in \cite{CSS2} would have bounded imaginary powers on $\cH^{s,\gg}_p(\B)$, $s\ge0$.  
In fact, our proof reduces the higher order case to the case $s=0$ with the help of 
commutator techniques. 

We use the result on bounded imaginary powers in order to improve earlier work on 
the Cahn-Hilliard equation on manifolds with conical singularities.

The Cahn-Hilliard equation 
\begin{eqnarray}\label{CH1}
\partial_t u(t)+\Delta^{2}u(t)+\Delta\big(u(t)-u^{3}(t)\big)&=&0, \quad  t\in(0,T);
\\ 
u(0)&=&u_{0},\label{CH2}
\end{eqnarray}
is a phase-field or diffuse interface equation which models phase separation of a binary mixture; $u$ denotes the concentration difference of the components. 
The sets where $u=\pm1$ correspond to domains of pure phases.
Global solvability had been established before, cf.\  Elliott and Zheng Songmu \cite{ElliottZhengSongmu86} or Caffarelli and Muler \cite{CaffarelliMuler95}.
We are interested in the properties of the solutions caused by the conical singularities. 

In \cite{RS} we studied the case of a {\em straight} conical singularity.
Our analysis was based on understanding the Laplacian $\gD$ associated with the cone metric. 
We fixed a suitable extension $\underline \gD$ of $\gD$ in $\cH^{0,\gg}_p(\B)$ and chose the domain of the bilaplacian correspondingly as 
$$\cD(\underline \gD^2)=\{u\in \cD(\underline \Delta): \Delta u \in \cD(\underline \gD)\}.$$
We could then show the existence of a maximally regular short time solution to the Cahn-Hilliard equation. An important ingredient was the existence of bounded imaginary powers for $\underline \gD$ established in \cite{CSS2}.

We  extend  this  in two directions: 

\begin{itemize}
\item  For the case of straight conical singularities we consider extensions of $\gD$ in the higher order Mellin-Sobolev spaces $\cH^{s,\gg}_p(\B)$, $s\ge0$, 
and establish higher regularity of the solutions using the above mentioned result on bounded imaginary powers.

\item We treat the case, where the manifold has warped conical singularities 
with the help of $R$-sectoriality and suitable perturbation results. 
\end{itemize}
Our strategy is to prove first maximal regularity for the linearized equation. 
For manifolds with straight conical singularities, we obtain it from the existence of bounded imaginary powers for the Laplacian by a combination of the above results with \cite[Theorem 5.7]{Sh}, cf.\ Theorem \ref{hibip}, below. 

For manifolds with warped cones, the existence of bounded imaginary powers for the Laplacian is not clear. 
We instead infer maximal regularity from $R$-sectoriality and perturbation results by 
Kunstmann and Weis \cite{KL}. 
Maximal regularity together with a theorem by Clément and Li then implies 
the existence of a short time solution to the non-linear equation. We obtain the following result: \medskip

{\bf Theorem.} {\it
Let $\underline \gD$ be the closed extension of the cone Laplacian in $\cH^{s,\gg}_p(\B)$ with domain $\cD(\underline \gD) = \cH^{s+2,\gg+2}_p(\B)\oplus \C$. 
Then, for small $T$, the Cahn-Hilliard equation with initial value $u_0$ has a unique solution $u$ in the space}
\begin{eqnarray*}
 W^{1}_q(0,T; \cH^{s,\gg}_p(\B))\cap L^q(0,T; \cD(\underline\gD^2)). 
\end{eqnarray*}
Here we choose $p\ge\dim \B$, $q>2$ and $\gg$ slightly larger than $\dim\B/2-2$; the possible choices are limited by spectral data for the boundary Laplacian $\gD_{h(0)}$. 
The initial value $u_0$ necessarily is an element of the real interpolation space 
$X_q=(\cD(\underline\gD^2),\cH^{s,\gg}_p(\B))_{1/q,q}$, and the solution belongs
to $C([0,T]; X_q)$. 

As the domain of $\gD^2$ (under a mild additional assumption) equals $\cH^{s+4,\gg+4}_p(\B)\oplus \cF$, where  $\cF$ is a finite-dimensional space of singular functions of the form $x^q\log^k x\, c(y)$,  the theorem gives us information on the asymptotics of the solution close to the conic point. 
The asymptotics can be determined rather explicitly. For straight cones, they only depend on spectral data of $\gD_{h(0)}$; see Section \ref{domd2} for details. 
On warped cones; they additionally depend on the metric $h$ and its derivatives in $x=0$. 
 
In principle, maximal regularity and the theorem of Clément and Li imply solvability 
even for quasilinear equations, while the Cahn-Hilliard equation is only semilinear. 
Still, this equation already exhibits very clearly the difficulties that arise as a consequence of the combination of singular analysis with nonlinear theory while many computations still can be performed explicitly, so that it seems a good example. 

Our results are complemented by a recent article by Vertman \cite{V}. 
He studied the Cahn-Hilliard  (and more general) equations on manifolds with edges, showed the existence of short time solutions and obtained results on their short time asymptotics. 
He works in the $L^2$-setting with the analysis based on the Friedrichs extension of the Laplacian and a microlocal heat kernel construction. 
In \cite[Definition 2.2(i)]{V} a spectral condition on $\gD_{h(0)}$ is imposed which, in the language of this article, corresponds to the assumption that $\overline \gve >1$ in \eqref{epsilonbar}. 
Moreover, warping of the cone is admitted, but condition (ii) of the mentioned definition requires the difference to the straight metric to be $O(x^2)$. 

This article is structured as follows: In Section 2, we recall basic notions such as
bounded imaginary powers, $R$-sectoriality and maximal regularity.
We prove the existence of bounded imaginary powers in Section 3.  
Section 4 contains the analysis of the Cahn-Hilliard equation in higher order Mellin-Sobolev spaces for the case of a straight cone metric. The case of warped cones is treated in Section 5. 
 

\section{Preliminary Results on Parabolic Problems and Mellin Sobolev Spaces}
\setcounter{equation}{0}
\subsection{Maximal $L^p$-regularity and parabolic problems}
We start with the notion of sectoriality which guarantees the existence of solutions for the linearized problem. For the rest of the section let $X_0$  be a Banach space.

\begin{definition}\label{sectorial}
For $\theta\in[0,\pi[$ denote by  $\mathcal{P}(\theta)$  the class of all closed densely defined linear operators $A$ in $X_0$ such that 
\[
S_{\theta}=\{z\in\mathbb{C}\,|\, |\arg z|\leq\theta\}\cup\{0\}\subset\rho{(-A)} \quad \mbox{and} \quad (1+|z|)\|(A+z)^{-1}\|\leq K_{\theta}, \quad z\in S_{\theta},
\]
for some $K_{\theta}\geq1$. The elements in $\mathcal{P}(\theta)$ are called {\em sectorial operators of angle $\theta$}.
\end{definition}

The Dunford integral allows us to define the complex powers of a sectorial operator for negative real part. The definition then extends to give arbitrary complex powers, cf. Amann \cite[III.4.6.5]{Am}.

\begin{definition}
Let $A\in\mathcal{P}(\theta)$, $\theta\in[0,\pi[$. We say that $A$  has {\em bounded imaginary powers} if there exists some $\varepsilon>0$ and $K\geq1$ such that 
\[
A^{it}\in\mathcal{L}(X_0) \,\,\,\,\,\, \mbox{and} \,\,\,\,\,\, \|A^{it}\|\leq K \,\,\,\,\,\, \mbox{for all} \,\,\,\,\,\, t\in[-\varepsilon,\varepsilon].
\]
In this case, there exists a $\phi\geq0$ such that  $\|A^{it}\|\leq M e^{\phi|t|}$ for all $t\in\mathbb{R}$ with some $M\geq1$, and  we write $A\in\mathcal{BIP}(\phi)$.
\end{definition}

We continue with the notion of the $R$-sectoriality, 
that is slightly stronger than the standard sectoriality and will guarantee maximal regularity for the linearized problem. 

\begin{definition}
Let $\theta\in[0,\pi[$. An operator $A\in\mathcal{P}(\theta)$ is called {\em $R$-sectorial of angle $\theta$} if for any choice of $\lambda_{1},...,\lambda_{n}\in S_{\theta}$,  $x_{1},...,x_{n}\in X_0$, and $n\in\mathbb{N}$, we have 
\[
\big\|\sum_{k=1}^{n}\epsilon_{k}\lambda_{k}(A+\lambda_{k})^{-1}x_{k}\big\|_{L^{2}(0,1;X_0)} \leq C \big\|\sum_{k=1}^{n}\epsilon_{k}x_{k}\big\|_{L^{2}(0,1;X_0)},
\]
for some constant $C\geq 1$, called the $R$-bound, and the sequence  $\{\epsilon_{k}\}_{k=1}^{\infty}$  of the Rademacher functions. 
\end{definition} 

Let $A$ be a closed densely defined linear operator $A:\mathcal{D}(A)=X_{1}\rightarrow X_{0}$. 
It is well known that $-A$ generates a bounded analytic semigroup if and only if $c+A\in\mathcal{P}(\theta)$ for some $c\in \C$ and some $\theta>\pi/2$. For such $A$, consider the  Cauchy problem
\begin{gather}\label{AP}
\Big\{\begin{array}{lclc} u'(t)+Au(t)=g(t), & t\in(0,T)\\
u(0)=u_{0}&
\end{array} 
\end{gather}
in the $X_{0}$-valued $L^{q}$-space $L^{q}(0,T;X_{0})$, where $1<q<\infty$,  $T>0$.
We say that $A$ has {\em maximal $L^{q}$-regularity}, if for some $q$ (and hence by a result of Dore \cite{Do} for all) we have that, given any data $g\in L^{q}(0,T;X_0)$ and $u_{0}$ in the real interpolation space $X_{q}=(X_{1},X_{0})_{\frac{1}{q},q}$, the unique solution of \eqref{AP} belongs to $L^{q}(0,T;X_{1})\cap W^{1,q}(0,T;X_{0})\cap C([0,T];X_{q})$ and depends continuously on $g$ and $u_{0}$. 
If the space $X_{0}$ is UMD (unconditionality of martingale differences property) then the following result holds.

\begin{theorem}{\rm (Weis, \cite[Theorem 4.2]{W})}
In a UMD Banach space any $R$-sectorial operator of angle greater than $\frac{\pi}{2}$ has maximal $L^{q}$-regularity. 
\end{theorem}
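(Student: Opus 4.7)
The plan is to recast maximal $L^q$-regularity as the $L^q$-boundedness of an operator-valued Fourier multiplier and then to invoke the operator-valued Mikhlin theorem on UMD spaces (itself also due to Weis, proved in the same paper).

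First I would reduce to the case $u_{0}=0$: the contribution of the initial value is handled by standard arguments using the trace characterization of $X_{q}$ and the bounded analytic semigroup generated by $-A$, which exists because $\theta>\pi/2$. It then suffices to prove that the map $g\mapsto Au$, where $u(t)=\int_{0}^{t}e^{-(t-s)A}g(s)\,ds$, is bounded on $L^{q}(0,\infty;X_{0})$. Extending $g$ by zero to $\R$, this operator is convolution with the kernel $t\mapsto Ae^{-tA}\mathbf{1}_{t>0}$, and its distributional Fourier transform is the operator-valued symbol
$$m(\xi):=A(i\xi+A)^{-1},\qquad \xi\in\R\setminus\{0\},$$
which is well defined because $\theta>\pi/2$ places the punctured imaginary axis inside $\rho(-A)$.

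The central step is the operator-valued Mikhlin multiplier theorem on UMD spaces: $m$ induces a bounded Fourier multiplier on $L^{q}(\R;X_{0})$ as soon as both families
$$\{m(\xi):\xi\neq 0\}\quad\text{and}\quad\{\xi m'(\xi):\xi\neq 0\}$$
are $R$-bounded. The first is immediate from the identity $m(\xi)=I-(i\xi)(i\xi+A)^{-1}$ together with the $R$-sectoriality hypothesis applied to $\lambda=i\xi\in S_{\theta}$, because augmenting an $R$-bounded family by a single bounded operator (here the identity) preserves $R$-boundedness. For the second, using that $A$ commutes with its resolvent one finds
$$\xi m'(\xi)=-\bigl[(i\xi)(i\xi+A)^{-1}\bigr]\cdot\bigl[A(i\xi+A)^{-1}\bigr],$$
and since the pointwise product of two $R$-bounded families of operators is again $R$-bounded, $R$-sectoriality supplies the required estimate. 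Causality of the convolution then transfers the $L^{q}(\R;X_{0})$-bound to $L^{q}(0,T;X_{0})$ for any $T>0$, and continuous dependence on the data follows from the closed graph theorem together with standard embeddings into $C([0,T];X_{q})$.

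The main obstacle is clearly the operator-valued Mikhlin theorem itself rather than the verifications above, which are essentially algebraic manipulations of resolvents. That multiplier theorem is precisely where the UMD hypothesis is used: it enters through a vector-valued Littlewood--Paley-type decomposition of $m$ whose dyadic pieces are controlled, via Kahane's contraction principle, by the $R$-boundedness of $\{m(\xi)\}$ and $\{\xi m'(\xi)\}$, while the reassembly of the pieces is legitimate only because of the unconditional boundedness of vector-valued Haar expansions characteristic of UMD spaces. Once this deep ingredient is accepted, the role of $R$-sectoriality (as opposed to mere sectoriality) in the theorem is exactly to produce, at the level of the resolvent, the randomized estimates that the operator-valued Mikhlin theorem demands of its symbol.
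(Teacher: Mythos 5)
Your proposal is correct and follows essentially the same route as the proof the paper points to: the statement is quoted from Weis \cite{W}, whose argument is precisely the reduction of maximal regularity to the operator-valued Fourier multiplier $m(\xi)=A(i\xi+A)^{-1}$, with the $R$-boundedness of $\{m(\xi)\}$ and $\{\xi m'(\xi)\}$ extracted from $R$-sectoriality and fed into his operator-valued Mikhlin theorem on UMD spaces. Your resolvent identities and the treatment of the initial value via the trace characterization of $X_q$ match that argument, modulo the standard approximation needed to identify the symbol of the non-integrable kernel $Ae^{-tA}\mathbf{1}_{t>0}$.
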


\begin{remark}\rm 
In a UMD space, an operator $A\in \mathcal{BIP}(\phi)$ with $\phi<\pi/2$ is
$R$-sectorial  with angle greater than $\pi/2$ by \cite[Theorem 4]{CP} and hence has maximal $L^q$-regularity. This also is a classical result by Dore and Venni \cite{DV}.  
\end{remark}

Next, we consider quasilinear problems of the form
\begin{gather}\label{QL}
\Big\{\begin{array}{lclc} \partial_{t}u(t)+A(u(t))u(t)=f(t,u(t))+g(t), & t\in(0,T_{0});\\
u(0)=u_{0}&	
\end{array} 
\end{gather}
in $L^{q}(0,T_{0};X_{0})$, such that $\mathcal{D}(A(u(t)))=X_{1}$, $1<q<\infty$ and $T_{0}$ is finite. The main tool we use for proving the existence of solutions of the above problems and regularity results is the following theorem that is based on a Banach fixed point argument.  

\begin{theorem}\label{CL} {\rm (Clément and Li, \cite[Theorem 2.1]{CL}) }
Assume that  there exists an open neighborhood $U$ of $u_0$ in $X_q$ such that $A(u_0): X_{1}\rightarrow X_{0}$ has maximal $L^{q}$-regularity and that  
\begin{itemize}
\item[(H1)] $A\in C^{1-}(U, \cL(X_1,X_0))$,    
\item[(H2)] $f\in C^{1-,1-}([0,T_0]\times U, X_0)$,
\item[(H3)] $g\in L^q(0,T_0; X_0)$.
\end{itemize}
Then there exists a $T>0$ and a unique $u\in L^q(0,T;X_1)\cap W^1_q(0,T;X_0)\cap C([0,T];X_q)$ solving the equation \eqref{QL} on $(0,T)$.
\end{theorem}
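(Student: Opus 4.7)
The plan is to set up a Banach fixed-point argument on the natural maximal regularity space
\begin{equation*}
\mathbb{F}_T := L^q(0,T;X_1)\cap W^{1,q}(0,T;X_0),
\end{equation*}
freezing the coefficient operator at $u_0$ so that the hypothesized maximal $L^q$-regularity of $A(u_0)$ is directly applicable. First I fix the affine subset $\mathbb{F}_T^{u_0}=\{v\in\mathbb{F}_T:v(0)=u_0\}$; by the trace embedding $\mathbb{F}_T\hookrightarrow C([0,T];X_q)$ with constant independent of $T\in(0,T_0]$ (after equipping $\mathbb{F}_T$ with the natural norm), this set is well defined and closed. For $v\in\mathbb{F}_T^{u_0}$ taking values in the neighborhood $U$ of the hypothesis, define $\Phi(v)$ to be the unique $w\in\mathbb{F}_T$ solving
\begin{equation*}
\partial_t w+A(u_0)w=[A(u_0)-A(v)]\,v+f(t,v)+g(t),\qquad w(0)=u_0,
\end{equation*}
whose existence, uniqueness, and continuous dependence on the data are precisely what maximal $L^q$-regularity of $A(u_0)$ furnishes, once one verifies that the right-hand side lies in $L^q(0,T;X_0)$.

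The next step is to exhibit a closed ball $\overline{B}_r(w_\ast)\subset\mathbb{F}_T^{u_0}$, centered at the solution $w_\ast$ of the reference linear problem $\partial_t w_\ast+A(u_0)w_\ast=g$, $w_\ast(0)=u_0$, that is invariant under $\Phi$ and on which $\Phi$ is a strict contraction. For invariance, one writes $\Phi(v)-w_\ast$ as the maximal regularity solution driven by $[A(u_0)-A(v)]v+f(t,v)$, and estimates each summand in $L^q(0,T;X_0)$ using (H1), (H2), and the uniform embedding constant. For contraction, an analogous computation decomposes $\Phi(v_1)-\Phi(v_2)$ as the maximal regularity solution driven by $[A(v_2)-A(v_1)]v_1+[A(u_0)-A(v_2)](v_1-v_2)+f(t,v_1)-f(t,v_2)$; each piece is bounded in $L^q(0,T;X_0)$ by a Lipschitz constant times $\|v_1-v_2\|_{\mathbb{F}_T}$ times a factor that will be made small by choosing $r$ and $T$ small.

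The main obstacle will be the quasilinear term $[A(u_0)-A(v)]v$: by (H1) it is bounded pointwise in $X_0$ by $L\,\|u_0-v(t)\|_{X_q}\|v(t)\|_{X_1}$, and $\|v(\cdot)\|_{X_1}$ is not itself $T$-small in $L^q$. One must therefore exploit the $X_q$-continuity supplied by the trace embedding: on $\overline{B}_r(w_\ast)$ one has
\begin{equation*}
\sup_{t\in[0,T]}\|u_0-v(t)\|_{X_q}\le\sup_{t\in[0,T]}\|w_\ast(t)-v(t)\|_{X_q}+\sup_{t\in[0,T]}\|u_0-w_\ast(t)\|_{X_q},
\end{equation*}
where the first term is bounded by a constant times $r$ and the second tends to $0$ as $T\to 0$ because $w_\ast\in C([0,T];X_q)$ with $w_\ast(0)=u_0$. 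Choosing first $r$ small enough that every such $v$ takes values in $U$ and the Lipschitz constants are controlled, and then $T$ small enough that all product-type factors fall below $1/2$, both invariance and contraction hold. Banach's theorem yields a unique fixed point $u$, which is the required solution; the continuity $u\in C([0,T];X_q)$ is inherited from the same trace embedding that underlies the construction.
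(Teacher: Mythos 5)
The paper does not prove this theorem at all: it is quoted verbatim from Cl\'ement and Li \cite[Theorem 2.1]{CL}, with only the remark that the result ``is based on a Banach fixed point argument.'' Your proposal is a correct reconstruction of exactly that argument --- freezing the coefficient at $u_0$, solving the linearized problem by maximal $L^q$-regularity, and contracting on a small ball around the reference solution $w_\ast$ in $L^q(0,T;X_1)\cap W^{1,q}(0,T;X_0)$, with the key points (the $T$-uniform trace embedding for differences vanishing at $t=0$, and the smallness of $\sup_t\|v(t)-u_0\|_{X_q}$ used to tame the term $[A(u_0)-A(v)]v$) handled as in the original source. So the approach matches the one the paper points to, and I see no gap.
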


\subsection{Mellin Sobolev Spaces}\label{MSS}

By a cut-off function (near $\partial \B$) we mean a smooth non-negative
function $\go$ on $\B$ with $\go\equiv1$ near $\partial \B$  and $\go\equiv0$ 
outside the collar neighborhood  of the boundary. 

There are various ways of extending the definition of the Mellin-Sobolev spaces 
$\cH^{s,\gg}_p(\B)$ given in the introduction for $s\in \N$ to arbitrary $s\in\R$.
A simple way, cf.~\cite{CSS1}, is given via the map 
$$\cS_\gamma: C^\infty_c(\R^{n+1}_+) \to   C^\infty_c(\R^{n+1}),\qquad 
v(t,y) \mapsto e^{(\gg-\frac{n+1}2)t} v(e^{-t},y).$$
Let $\kappa_{j}:U_{j}\subseteq\partial \B\to\R^n$, $j=1,\ldots,N,$
be a covering of $\partial \B$ by coordinate charts and $\{\varphi_{j}\}$ 
a subordinate partition of unity. 

\begin{definition}
$\cH^{s,\gg}_p(\B)$, $s,\gg\in\R$, $1<p<\infty$, is the space of all distributions on $\B^\circ$ such that 
    \begin{equation}\label{norm}
	\begin{array}{lcl}
	    \|u\|_{\cH^{s,\gg}_p(\B)} \! & \! = \! & \!
	    \displaystyle
	     \sum_{j=1}^{N}\|\cS_{\gamma}(1\otimes\kappa_{j})_{*}
	              (\omega\varphi_{j}u)\|_{H^{s}_{p}(\R^{1+n})}+
	             \| (1-\omega)u)\|_{H^{s}_{p}(\B)}
        \end{array}    
    \end{equation}
is defined and finite. Here, $\omega$ is a (fixed) cut-off 
function and $*$ refers to the push-forward of distributions. 
Up to equivalence of norms, this construction is independent of the choice of 
$\go$ and the $\kappa_j$. 
Clearly, all the spaces $\cH^{s,\gg}_p(\B)$ are UMD spaces. 
\end{definition}

\begin{corollary}\label{c0}Let $s>(n+1)/p$. 
Then $\|uv\|_{\cH^{s,\gg}_p(\B)}\le \|u\|_{\cH^{s,\gg}_p(\B)}\|v\|_{\cH^{s,(n+1)/2}_p(\B)}$.  
In particular, $\cH^{s,\gg}_p(\B)$ is a Banach algebra whenever $s>(n+1)/p$ and $\gg\ge (n+1)/2$.
\end{corollary}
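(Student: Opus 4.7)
The approach rests on two ingredients: the pointwise identity
\[
\cS_{\gamma}(fg) = \cS_{\gamma}(f)\cdot \cS_{(n+1)/2}(g)
\]
for functions $f,g$ on $\R^{n+1}_+$, which is immediate from the definition of $\cS_{\gamma}$ since $e^{(\gamma-(n+1)/2)t}(fg)(e^{-t},y)=\big(e^{(\gamma-(n+1)/2)t}f(e^{-t},y)\big)\cdot g(e^{-t},y)$, and the classical Banach algebra property $\|fg\|_{H^{s}_{p}(\R^{n+1})}\le C\|f\|_{H^{s}_{p}}\|g\|_{H^{s}_{p}}$ valid for $s>(n+1)/p$. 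First I decompose $uv=\omega uv+(1-\omega)uv$ via the fixed cut-off $\omega$ from the definition of the norm.

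For the collar contribution I introduce auxiliary cut-offs $\tilde\omega$ and $\tilde\varphi_j$ equal to $1$ on the supports of $\omega$ and $\varphi_j$ respectively, with $\tilde\omega$ still supported in the collar neighborhood and $\tilde\varphi_j$ supported in the chart $U_j$. Then $\omega\varphi_j uv=(\omega\varphi_j u)(\tilde\omega\tilde\varphi_j v)$, both factors are supported in $U_j$, and after push-forward by $1\otimes\kappa_j$ the identity above gives
\[
\cS_{\gamma}(1\otimes\kappa_j)_*(\omega\varphi_j uv) = \cS_{\gamma}(1\otimes\kappa_j)_*(\omega\varphi_j u)\cdot \cS_{(n+1)/2}(1\otimes\kappa_j)_*(\tilde\omega\tilde\varphi_j v).
\]
The algebra property of $H^{s}_{p}(\R^{n+1})$ bounds its norm by the product of the two factor norms; the first is at most $\|u\|_{\cH^{s,\gamma}_{p}(\B)}$ by definition, and the second is dominated by $C\|v\|_{\cH^{s,(n+1)/2}_{p}(\B)}$ using the well-known independence, up to equivalence, of the Mellin-Sobolev norm from the specific choice of cut-off and partition of unity.

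For the interior contribution I factor $(1-\omega)uv=((1-\omega)u)(\chi v)$ with $\chi\in C^{\infty}_{c}(\B^\circ)$ equal to $1$ on the support of $1-\omega$, and apply the classical algebra property of $H^{s}_{p}$ on a compactly supported product together with the local equivalence of $\cH^{s,\gamma'}_{p}$ and $H^{s}_{p}$ in the interior (independent of the weight $\gamma'$). Summing over $j$ and combining the two pieces yields the first inequality. The Banach algebra assertion for $\gamma\ge(n+1)/2$ then follows from the embedding $\cH^{s,\gamma}_{p}(\B)\hookrightarrow \cH^{s,(n+1)/2}_{p}(\B)$, which I obtain from the identity $\cS_{(n+1)/2}v = e^{-(\gamma-(n+1)/2)t}\cS_{\gamma}v$: on the support of $\cS_{\gamma}(\omega\varphi_j v)$, contained in $\{t\ge 0\}$ since $0<x\le 1$ on $\mathrm{supp}\,\omega$, the factor $e^{-(\gamma-(n+1)/2)t}$, truncated by a suitable $t$-cut-off, belongs to $C^{\infty}_{b}(\R^{n+1})$ and acts as a bounded pointwise multiplier on $H^{s}_{p}(\R^{n+1})$.

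The main technical nuisance is the careful bookkeeping of the auxiliary cut-offs $\tilde\omega$, $\tilde\varphi_j$, and $\chi$, and the verification that substituting them for the original ones in the norm definition only produces equivalent norms; this relies on the independence of $\|\cdot\|_{\cH^{s,\gamma}_{p}(\B)}$ from such choices, which is built into the definition. No subtle analysis beyond the multiplicative structure of $\cS_{\gamma}$ and the standard Sobolev algebra property is required.
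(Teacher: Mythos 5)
Your proof is correct and follows essentially the same route as the paper: the identity $\cS_{\gamma}(fg)=\cS_{\gamma}(f)\cdot\cS_{(n+1)/2}(g)$ is exactly the weight-splitting the paper uses before invoking the Banach algebra property of $H^{s}_{p}(\R^{n+1})$ for $s>(n+1)/p$. You merely carry out in detail the cut-off bookkeeping and the embedding $\cH^{s,\gamma}_{p}(\B)\hookrightarrow\cH^{s,(n+1)/2}_{p}(\B)$ that the paper dispatches with ``we can assume $u$ and $v$ supported near the boundary'' and equivalence of norms.
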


\Proof{Since $s>(n+1)/p$, $H^s_p(\R^{1+n})$ is a Banach algebra. We can therefore assume
$u$ and $v$ to be supported close to the boundary. Then
\begin{eqnarray*}
\|e^{(\gg-\frac{n+1}2)t} u(e^{-t},y)v(e^{-t},y)\|_{H^s_p(\R^{n+1})}
&\le&\|e^{(\gg\frac{n+1}2)t} u(e^{-t},y)\|_{H^s_p(\R^{n+1})}\|v(e^{-t},y)\|_{H^s_p(\R^{n+1})}\\
&\sim& \|u\|_{\cH^{s,\gg}_p(\B)} \|v\|_{\cH^{s,(n+1)/2}_p(\B)},
\end{eqnarray*}
where $\sim$ denotes equivalence of norms. 
}

\begin{corollary}\label{c1} Let $1\le p<\infty$ and $s>(n+1)/p$. 
Then a function $u$ in 
$\cH^{s,\gg}_p(\B)$ is continuous on $\B^\circ$, and, near $\partial\B$, we have
\begin{eqnarray*}
|u(x,y)|\le c x^{\gg-(n+1)/2} \|u\|_{\cH^{s,\gg}_p(\B)}
\end{eqnarray*}
for a constant $c>0$.
\end{corollary}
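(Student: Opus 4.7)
\medskip

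My plan is to deduce both statements directly from the Sobolev embedding $H^s_p(\R^{n+1})\hookrightarrow C_b(\R^{n+1})$, which holds exactly under the assumption $s>(n+1)/p$, via the transformation $\cS_\gamma$ that is built into the definition of the norm \eqref{norm}.

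\textbf{Continuity on the interior.} Away from the boundary, $u\in \cH^{s,\gg}_p(\B)$ is locally in $H^s_p$ of a coordinate patch of $\B^\circ$, so the classical Sobolev embedding shows that $u$ is continuous on $\B^\circ$. For the collar neighborhood I pull back by charts and use the statement near the boundary proved below.

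\textbf{The pointwise bound near $\partial\B$.} Fix a point $(x_0,y_0)$ in the collar with $x_0$ small, and pick an index $j$ so that $\varphi_j(y_0)=1$ on a neighborhood of $y_0$ and $\omega\equiv1$ near $x_0$. Write $v_j=(1\otimes \kappa_j)_*(\omega\varphi_j u)$, a distribution on $\R_+\times \R^n$ supported in the strip $\{0<x<1\}$. By definition of the norm,
\[
\|\cS_\gg v_j\|_{H^s_p(\R^{n+1})}\le \|u\|_{\cH^{s,\gg}_p(\B)}.
\]
Since $s>(n+1)/p$, the Sobolev embedding theorem gives
\[
\|\cS_\gg v_j\|_{L^\infty(\R^{n+1})}\le c\,\|\cS_\gg v_j\|_{H^s_p(\R^{n+1})}\le c\,\|u\|_{\cH^{s,\gg}_p(\B)}.
\]
By the very definition of $\cS_\gg$, $(\cS_\gg v_j)(t,y)=e^{(\gg-\frac{n+1}2)t}v_j(e^{-t},y)$, i.e.\
\[
v_j(x,y)=x^{\gg-\frac{n+1}2}(\cS_\gg v_j)(-\log x,y).
\]
Evaluated at $(x_0,\kappa_j(y_0))$ this gives
\[
|u(x_0,y_0)|=|v_j(x_0,\kappa_j(y_0))|\le x_0^{\gg-\frac{n+1}2}\|\cS_\gg v_j\|_{L^\infty}\le c\,x_0^{\gg-\frac{n+1}2}\|u\|_{\cH^{s,\gg}_p(\B)},
\]
which is the desired estimate. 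Covering the boundary by finitely many such charts and taking the maximum of the constants yields a single $c$ valid on a uniform collar neighborhood.

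\textbf{Expected obstacle.} There is essentially no hard step; the only minor care needed is to make sure the local factor $\varphi_j$ on which we evaluate is actually $\equiv 1$ at the point $y_0$ under consideration (otherwise one only bounds $\varphi_j(y_0)|u(x_0,y_0)|$). This is handled by choosing a covering and a subordinate partition of unity so that at every point of $\partial\B$ at least one $\varphi_j$ equals one on a neighborhood, or, alternatively, by summing the local bounds for all $j$ and absorbing the finite number of terms into the constant $c$.
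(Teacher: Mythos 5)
Your proof is correct, but it takes a different route from the paper's. The paper first applies the trace theorem to $\cS_\gg(1\otimes\kappa_j)_*(\go\varphi_j u)\in H^s_p(\R^{1+n})$ to get, for each fixed $t$, a bound $e^{(\gg-(n+1)/2)t}\|u(e^{-t},\cdot)\|_{B^{s-1/p}_{p,p}(\partial\B)}\le c\|u\|_{\cH^{s,\gg}_p(\B)}$ on the slices, and then reaches $L^\infty$ of the cross-section via the embeddings $B^{s-1/p}_{p,p}(\partial\B)\hookrightarrow H^{s-1/p-\gve}_p(\partial\B)\hookrightarrow C(\partial\B)$, using $s-1/p-\gve>n/p$. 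You instead apply the full $(n+1)$-dimensional Sobolev embedding $H^s_p(\R^{n+1})\hookrightarrow C_b(\R^{n+1})$ directly to $\cS_\gg v_j$ and then invert the substitution $x=e^{-t}$; this is more elementary (no trace or Besov spaces needed) and uses the hypothesis $s>(n+1)/p$ in exactly one place, while the paper's slicing argument yields the slightly stronger by-product of a uniform weighted bound on the Besov norms of the slices $u(x,\cdot)$. The one point you rightly flag --- that $\varphi_j(y_0)$ need not equal $1$ --- is handled cleanly by your second alternative: since $\sum_j\varphi_j\equiv1$, writing $u(x_0,y_0)=\sum_j\go(x_0)\varphi_j(y_0)u(x_0,y_0)$ for $x_0$ in the region where $\go\equiv1$ and bounding each summand as you do absorbs the finitely many terms into the constant; this is preferable to modifying the covering, since the norm \eqref{norm} is defined with a genuine partition of unity (though it is equivalent for any admissible choice). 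With that choice made, the argument is complete.
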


This is \cite[Corollary 2.5]{RS}. For convenience, we recall the easy proof.
The Sobolev embedding theorem implies continuity as  
$\cH^{s,\gg}_p(\B)\hookrightarrow H^s_{p,loc}(\B^\circ)$.
Near the boundary, we deduce from \eqref{norm} and the trace 
theorem that for each $t\in\R$, 
$$e^{(\gg-(n+1)/2)t}\|u(e^{-t},\cdot)\|_{B^{s-1/p}_{p,p}(\partial\B)}
\le c \|u\|_{\cH^{s,\gamma}_p(\B)}.
$$
For $x=e^{-t}$ we obtain the assertion from the fact that the Besov space 
$B^{s-1/p}_{p,p}(\partial\B)$ embeds into the Sobolev space
$H_p^{s-1/p-\gve}(\partial\B)$ for every $\gve>0$ and the 
Sobolev embedding theorem. \hfill $\Box$

\begin{corollary}\label{c2} Let $1< p<\infty$,  $s\ge0$, $\gve>0$.  
Then the operator $M_m$ of multiplication by a function $m$ in 
$\cH^{s+(n+1)/p+\gve,(n+1)/2}_p(\B)$ defines a continuous map
$$M_m: \cH^{s,\gg}_p(\B)\to \cH^{s,\gg}_p(\B).$$
\end{corollary}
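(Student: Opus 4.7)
The idea is to reduce the multiplier claim to a pointwise multiplier estimate on the standard Sobolev space $H^s_p(\R^{n+1})$ via the transform $\cS_\gg$ from the definition of the $\cH^{s,\gg}_p$-norm. The crucial algebraic identity is
\[
\cS_\gg(mu)(t,y)=\cS_{(n+1)/2}(m)(t,y)\cdot\cS_\gg(u)(t,y),
\]
valid for $u$ and $m$ supported near the boundary. It follows immediately from $\cS_\gg v(t,y)=e^{(\gg-(n+1)/2)t}v(e^{-t},y)$: the weight factor appears linearly and can be absorbed entirely into the $u$-factor, leaving $m$ with the neutral weight corresponding to $\gg=(n+1)/2$. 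Thus multiplication by $m$ on $\cH^{s,\gg}_p(\B)$ translates into multiplication by $\cS_{(n+1)/2}(m)$ on $H^s_p(\R^{n+1})$, with the weight $\gg$ of $u$ dropping out entirely.

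Concretely, I would first split $u$ using the cut-off $\omega$ appearing in \eqref{norm}. The interior part $(1-\omega)u$ is handled on a compact subset of $\B^\circ$ by the classical Sobolev multiplier theorem: for $s\ge 0$, $\sigma>(n+1)/p$ and $\sigma\ge s$ one has $\|fg\|_{H^s_p(\R^{n+1})}\le C\|f\|_{H^\sigma_p(\R^{n+1})}\|g\|_{H^s_p(\R^{n+1})}$, applied with $\sigma=s+(n+1)/p+\gve$. For the boundary part I would introduce auxiliary cut-offs $\tilde\omega$ and $\tilde\varphi_j$ with $\tilde\omega\omega=\omega$ and $\tilde\varphi_j\varphi_j=\varphi_j$, and use the identity above to write, in each chart $\kappa_j$,
\[
\cS_\gg(1\otimes\kappa_j)_*(\omega\varphi_j mu)=\cS_{(n+1)/2}(1\otimes\kappa_j)_*(\tilde\omega\tilde\varphi_j m)\cdot\cS_\gg(1\otimes\kappa_j)_*(\omega\varphi_j u).
\]
Applying the Sobolev multiplier theorem with the same $\sigma$ and summing over $j$, the first factors combine into $\|m\|_{\cH^{s+(n+1)/p+\gve,(n+1)/2}_p(\B)}$ while the $H^s_p$-norms of the second factors are controlled by $\|u\|_{\cH^{s,\gg}_p(\B)}$.

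The main obstacle is purely bookkeeping: arranging the auxiliary cut-offs so that the chart-by-chart factorization closes without leaking uncontrolled error terms. Conceptually, the only point not already covered by Corollary \ref{c0} is the use of the classical Sobolev multiplier theorem in the regime $s<(n+1)/p$, where $H^s_p(\R^{n+1})$ is not an algebra; a paraproduct decomposition handles this in one line, so no deeper technique is required.
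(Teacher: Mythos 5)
Your proposal is correct and follows essentially the same route as the paper: both reduce to local coordinates near the boundary via $\cS_\gg$, use exactly the observation that the weight factor attaches entirely to $u$ so that $m$ is transformed with the neutral weight $(n+1)/2$, and then invoke a Euclidean pointwise-multiplier theorem on $H^s_p(\R^{n+1})$. The only cosmetic difference is that the paper factors the multiplier step through the Zygmund space, citing $H^{s+(n+1)/p+\gve}_p\hookrightarrow C^{s+\gve}_*$ and the boundedness of multiplication by $C^\tau_*$-functions on $H^s_p$ for $-\tau<s<\tau$ (Taylor), which is precisely the paraproduct argument you defer to for the regime $s<(n+1)/p$.
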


\Proof {Let $v\in \cH^{s,\gg}_p(\B)$ and denote by $\sim$ equivalence of norms. We can assume that $v$ is supported in a single coordinate neighborhood and work in local coordinates. Then
\begin{eqnarray*}
\lefteqn{\|mv\|_{\cH^{s,\gg}_p(\B)} 
\sim \|e^{(\gg-(n+1)/2)t}m(e^{-t},y)v(e^{-t},y)\|_{H^s_p(\R^{n+1})}}\\
&\le& c_1 \|m(e^{-t},y)\|_{C^{s+\gve}_*(\R^{n+1})}
	\|e^{(\gg-(n+1)/2)t}v(e^{-t},y)\|_{H^s_p(\R^{n+1})}\\
&\le& c_2\|m(e^{-t},y)\|_{H^{s+(n+1)/p+\gve}_p(\R^{n+1})}
	\|e^{(\gg-(n+1)/2)t}v(e^{-t},y)\|_{H^s_p(\R^{n+1})}\\
&\sim&\|m\|_{\cH^{s+(n+1)/p+\gve,(n+1)/2}_p(\B)} \|v\|_{\cH^{s,\gg}_p(\B)}.
\end{eqnarray*} 
Here, the first inequality is due to the fact that multiplication by functions in the Zygmund space
$C^\tau_*$ defines a bounded operator in $H^s$ for $-\tau<s<\tau$, cf.\ \cite[Section 13, Theorem 9.1]{Tay}, and the second is a consequence of the fact that $H^{s+(n+1)/p}_p(\R^{n+1})
\hookrightarrow C^s_*$, cf.\ \cite[Section 13, Proposition 8.5]{Tay}.}


\section{Bounded Imaginary Powers of Cone Differential Operators}
\setcounter{equation}0

\subsection{Cone differential operators}\label{cdop}
We consider a cone differential operator 
\begin{eqnarray}\label{a}
A: C_c^\infty(\B^\circ,E)\to C_c^\infty(\B^\circ,E)
\end{eqnarray}
of the form  \eqref{conediffop}, acting on sections of a vector bundle $E$ over $\B$.
We may assume that $E$ respects the product structure near the boundary $\partial \B$, i.e. is the pull-back of a vector bundle $E|_{\partial \B}$ over $\partial \B$ 
under the canonical projection $[0,1[\times \partial \B\to \partial \B$. 
In order to keep the notation simple, we shall not indicate the bundles in the function spaces and write $C^\infty_c(\B^\circ), \cH^{s,\gg}_p(\B)$, etc.. 
We moreover assume $A$ to be $\B$-elliptic as explained around \eqref{rps} in the introduction. 

The conormal symbol  of a $A$ is the operator polynomial 
$$\gs_M(A):\C\to \cL(H_p^{s+\mu}(\partial\B), H^s_p(\partial\B)),\text{ defined by } 
\gs_M(A)(z) =   \sum_{j=0}^\mu a_j(x,y,D_y)z^j.$$
To simplify matters, we shall assume:
\begin{eqnarray}\label{conormal}
\gs_M(A) \text{ is invertible on the line }\Re z = (n+1)/2-\gg-\mu
\end{eqnarray}  
(note that the invertibility is independent of $s\in\R$ and $1<p<\infty$).
This implies the existence of a parametrix $B:\cH^{s,\gg}_p(\B)\to 
\cH^{s+\mu,\gg+\mu}_p(\B)$, such that $BA-I$ is compact on $\cH^{s+\mu,\gg+\mu}_p(\B)$ and
$AB-I$ is compact on $\cH^{s,\gg}_p(\B)$.

We shall now consider $A$ as an unbounded operator in 
$\cH^{s,\gg}_p(\B)$. 
Under assumption \eqref{conormal} the domain of the minimal extension $A_{\min,s}$ of $A$, i.e. the closure of $A$ acting as in \eqref{a} in $\cH^{s,\gg}_p(\B)$, is   
$$\cD(A_{\min,s})=\cH^{s+\mu,\gg+\mu}_p(\B),$$ 
and the domain of the maximal extension $A_{\max,s}$ is 
\begin{eqnarray}\label{Dmax}
\cD(A_{\max,s}) =\cD(A_{\min,s}) \oplus \cE,
\end{eqnarray}
where $\cE$ is a finite-dimensional space  of functions of the form 
\begin{eqnarray}
\label{sing}
\sum_{j,k}c_{jk}(y)\go(x) x^{-q_j}\log^kx
\end{eqnarray}
with $c_{jk}\in C^\infty(\partial \B)$, a cut-off function $\go$, 
\begin{eqnarray}\label{real}
\frac{n+1}2-\gg-\mu < \Re q_j < \frac{n+1}2-\gg,\quad \text{and }\ k\in \N_0.
\end{eqnarray}
As a subset of $\cH^{\infty,\gg}_p(\B)$, $\cE$ is independent of $s$.
For details see Section 3 in \cite{GKM} (the statements there are made for the case $p=2$, but they extend to $1<p<\infty$) in connection with \cite[Sections 2.2, 2.3]{Sh}.  

It follows that the domain of an arbitrary closed extension $\underline A_s$ of $A$
in $\cH^{s,\gg}_p(\B)$ is of the form 
$$\cD(\underline A_s) = \cD(A_{\min,s})\oplus \underline \cE
$$
with a subspace $\underline \cE$ of the above space $\cE$. 

\begin{remark}\rm
In case condition \eqref{conormal} does not hold, the minimal domain of $A$, 
considered as an unbounded operator in $\cH^{s,\gg}_p(\B)$ with representation 
\eqref{conediffop} is given by
\begin{eqnarray}\label{Dmin}
\cD(A_{\min,s})= \Big\{ u\in \bigcap_{\gve>0}\cH_p^{s+\mu,\gg+\mu-\gve}(\B): 
x^{-\mu}\sum_{j=0}^\mu a_j(0,y,D_y)(-x\partial_x)^ju\in \cH^{s,\gg}_p(\B)\Big\};
\end{eqnarray}
the maximal domain still  is as in \eqref{Dmax}.
\end{remark}

\subsection{The results on $\cH^{0,\gg}_p(\B)$}

We denote by  $\underline A_0$ a closed extension of $A$, 
considered as an unbounded operator on $\cH^{0,\gg}(\B)$. 
It is the main result of \cite{CSS2} that $\underline A_0$ has bounded imaginary 
powers, provided the resolvent has a certain  structure which we will now recall:

For given $\gt$ and $\gd$, let
$$\gL_\gt = \{z\in \C:|z|\le\gd \text{ or }|\arg z| \ge \gt  \}.$$
Assume that the resolvent to $\underline A_0$ exists on $\gL_\gt$ and 
is of the following form:  
\begin{eqnarray}\label{res}
(\underline A_0-\gl)^{-1} = \go_1(x^\mu\op_M^\gg(g)(\gl)  + G(\gl))\,\go_2 + (1-\go_1)P(\gl) (1-\go_3) 
+G_\infty(\gl).
\end{eqnarray}
Here $\go_1, \go_2, \go_3$ are cut-off functions with $\go_1\go_2 = \go_1$, $\go_3\go_1 = \go_3$, and 
\begin{enumerate}\renewcommand{\labelenumi}{(\roman{enumi})}
\item $g(x, z, \gl) = \tilde g(x, z, x^\mu\gl)$  with 
$\tilde g\in C^\infty(\R_{\ge0}, M_{\cO}^{-\mu,\mu}(\partial \B; \gL_\gt))$
\item $P (\gl) \in  L^{-\mu,\mu}_{cl}(\B^\circ;\gL_\gt )$
\item $G(\gl) \in  R_G^{-\mu,\mu}(\R\times\partial \B; \gL_\gt, \gg )$
\item $G_\infty(\gL_\gt) \in   C_G^{-\infty}(\B;\gL_\gt,\gg)$.
\end{enumerate}
Let us give a short description of these operator classes; full details can be found in 
\cite{CSS2}.
\begin{itemize}
\item $M_{\cO}^{-\mu,\mu}(\partial \B; \gL_\gt)$ is the class of holomorphic Mellin symbols of order 
$-\mu$, depending on the parameter $\gl\in \gL_\gt$ in such a way that if we write $\gl=\gs^\mu$, 
then  $\gs$ plays the role of an additional covariable. 
\item $L^{-\mu,\mu}_{cl}(\B^\circ;\gL_\gt )$ 
denotes the classical pseudodifferential operators of order $-\mu$
depending on the parameter $\gl\in \gL_\gt$ in the same sense as above.
\end{itemize}

We need more notation to explain the other two classes. 
By $C^{\infty,\gg}(\B)$ denote the space of all smooth functions $u$ on $\B^\circ$ such that, in local coordinates near the boundary 
\begin{eqnarray}\label{sgg}
\sup_{0<x<1} x^{(n+1)/2-\gg} |||\log^l x(x\partial_x)^k u(x,\cdot)|||<\infty \text{ for all }	k,l\in \N_0	
\end{eqnarray}
for each semi-norm $ ||| \cdot |||$  of $C^\infty(\partial \B)$. 
Similarly, $\cS_0^{\gg}$  
is the space of all 
$u=u(x,y) \in C^\infty (\R_{>0}\times \partial \B)$ which are rapidly decreasing as $x\to \infty$  and satisfy \eqref{sgg}. 
Now:
\begin{itemize}
\item $R_G^{-\mu,\mu}(\R_{>0}\times \partial \B; \gL_\gt, \gg )$ is the space of 
all operator families  $G = G(\gl)$ with an integral kernel with respect to the 
$L^2(\R_{>0}\times \partial \B, x^ndxdy)$-scalar product of the form
\begin{eqnarray}\label{kernel}
k(\gl,x,y,x',y' ) 
= [\gl]^{(n+1)/\mu}\widetilde k(\gl,[\gl]^{1/\mu} x,y,[\gl]^{1/\mu} x' ,y' ),
\end{eqnarray}
where $[\cdot]$ is a smoothed norm-function (i.e., $[\cdot]$ is smooth, positive on $\C$ and 
$[\gl] = |\gl|$ for large $\gl$), and, for some $\gve = \gve(G)>0$,
\begin{eqnarray}\label{kernel2}
 \widetilde k(\gl,x,y,x',y')\in 
S^{-1}_{cl}(\gL_\gt)\widehat\otimes_\pi \, \cS_0^{\gg+\gve}\, 
\widehat\otimes_\pi\,  \cS_0^{\gg+\gve}. 
\end{eqnarray}
\item  An operator-family $G_\infty = G_\infty(\gl)$, $\gl\in \gL_\gt$, belongs to 
$C_G^{-\infty}(\B;\gL_\gt,\gg)$, if there exists an $\gve  =\gve(G_\infty)>0$ such that 
$G_\infty(\gl)$ has an integral kernel $k(\gl) = k(\gl,\cdot,\cdot)$ with respect to the $\cH^{0,0}(\B)$-scalar product and
$$
k(\gl,y,y') \in \cS(\gL_\gt,C^{\infty,\gg+\gve}(\B)\widehat\otimes_\pi C^{\infty,\gg+\gve}(\B)).
$$
\end{itemize}

The following theorem follows from \cite[Theorem 2]{CSS2} in combination with Remark 6 there.

\begin{theorem}\label{css2}
Let $\underline A_0$ be an extension of the cone differential operator $A$ on $\cH^{0,\gg}_p(\B)$
with domain 
\begin{eqnarray}\label{css2.1}
\cD(\underline A_0) = \cH^{\mu, \gg+\mu}_p(\B)\oplus\underline \cE,
\end{eqnarray}
with a subspace $\underline \cE$ of $\cE$, whose resolvent is of the form \eqref{res} above. 
Then $\underline A_0$ has bounded imaginary powers on 
$\cH^{0,\gg}_p(\B)$ of angle $\gt$.
\end{theorem}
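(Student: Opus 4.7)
The plan is to represent $\underline A_0^{it}$ by a Dunford integral and to exploit the explicit structure \eqref{res} of the resolvent. Concretely, for a contour $\Gamma$ traced in $\gL_{\gt'}$ encircling the spectrum of $\underline A_0$, with $\gt'$ arbitrarily close to $\gt$, one writes
$$
\underline A_0^{it} \;=\; \frac{1}{2\pi i}\int_\Gamma \gl^{it}\,(\gl-\underline A_0)^{-1}\,d\gl,
$$
and inserts \eqref{res}. The task then splits into four contributions, one for each term of the decomposition, and the theorem follows once each contribution is shown to produce a bounded operator on $\cH^{0,\gg}_p(\B)$ whose norm grows no faster than $Me^{\gt|t|}$ in $|t|$.

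The two residual pieces are the more straightforward ones. The smoothing family $G_\infty(\gl)$ is Schwartz in $\gl$ with values in operators that are regularizing on $\cH^{0,\gg}_p(\B)$, so the $\Gamma$-integral converges absolutely in $\cL(\cH^{0,\gg}_p(\B))$ and the only $t$-dependence is the bound $|\gl^{it}|\le e^{\gt'|t|}$ on $\Gamma$. The interior piece $(1-\go_1)P(\gl)(1-\go_3)$ is supported away from the singularity and belongs to the parameter-dependent classical calculus $L^{-\mu,\mu}_{cl}(\B^\circ;\gL_\gt)$; a parameter-dependent version of Seeley's classical argument shows that its Dunford integral is a zeroth-order classical pseudodifferential operator on the interior and is therefore bounded on the corresponding part of $\cH^{0,\gg}_p(\B)$ with the required exponential control.

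The substantive work concerns the Mellin contribution $\go_1\bigl(x^\mu\op_M^\gg(g)(\gl)+G(\gl)\bigr)\go_2$ near the tip. Exploiting the relation $g(x,z,\gl)=\tilde g(x,z,x^\mu\gl)$, one substitutes $\gl=\gs^\mu$ and deforms the contour using the holomorphy of $\tilde g$ in $z$; after these manipulations the Dunford integral of the first summand assumes the form $\op_M^\gg(f_t)$ of a Mellin pseudodifferential operator whose symbol $f_t(x,z)$ lies, uniformly in $t\in\R$, in an order-zero Mellin symbol class on the weight line $\Re z=(n+1)/2-\gg$, and is therefore bounded on $\cH^{0,\gg}_p(\B)$ by a Mikhlin-type theorem applied after conjugation with $\cS_\gg$. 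The Green family $G(\gl)$ is handled by the same pattern: its kernel decomposition \eqref{kernel}--\eqref{kernel2} isolates the parameter dependence inside the classical symbol class $S^{-1}_{cl}(\gL_\gt)$, which guarantees absolute convergence of the $\gl$-integral, while the $\cS_0^{\gg+\gve}$ tensor factors ensure weighted-$L^p$ continuity with an $\gve$-slack in the weight that absorbs the cut-offs $\go_1,\go_2$.

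The principal obstacle is the uniform-in-$t$ control of the resulting Mellin symbol $f_t$: one must show not merely that $f_t$ lies in the order-zero class, but that its seminorms grow at most like $e^{\gt|t|}$ as $|t|\to\infty$. This is engineered by choosing $\gt'$ arbitrarily close to $\gt$, so that on $\Gamma$ one has $|\gl^{it}|\le e^{\gt'|t|}$, while all symbol and kernel estimates along the contour contribute only polynomial factors in $t$, which are absorbed into the exponential upon passing from $\gt'$ down to $\gt$. Collecting the four estimates then gives $\underline A_0^{it}\in\cL(\cH^{0,\gg}_p(\B))$ with $\|\underline A_0^{it}\|\le Me^{\gt|t|}$, which is the statement of the theorem.
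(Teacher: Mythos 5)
Your outline is correct and follows essentially the same route as the proof the paper relies on: the paper itself gives no argument for this theorem but cites \cite[Theorem 2 and Remark 6]{CSS2}, whose proof is exactly the Dunford-integral-plus-term-by-term analysis of the four components of \eqref{res} that you describe (and which the paper reuses in Step 3 of the proof of Theorem \ref{hibip}, invoking Propositions 2 and 3 of \cite{CSS2} for the Mellin and Green parts). The only loose point is your final remark about absorbing polynomial factors in $t$ by moving from $\gt'$ to $\gt$: since the contour must stay in $\gL_\gt$, such factors would only yield angle $\gt+\gve$ for every $\gve>0$, so to obtain angle exactly $\gt$ one needs, as in \cite{CSS2}, symbol estimates for $e^{-\gt|\Im z|}g_z$ that are genuinely uniform rather than polynomially growing.
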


\subsection{Bounded imaginary powers on higher order Mellin-Sobolev spaces}

We next extend Theorem \ref{css2} to higher order Mellin-Sobolev spaces, keeping $\gg, p$ and the 
space $\underline\cE$ in \eqref{css2.1} fixed.

\begin{theorem}\label{hibip} 
For $s\ge0$ denote by $\underline A_s$ the unbounded operator on $\cH^{s,\gg}_p(\B)$ 
with domain 
$$\cD(\underline A_s) = \cH^{s+\mu, \gg+\mu}_p(\B)+ \underline \cE.$$
Assume that the resolvent of the corresponding extension $\underline A_0$ with domain \eqref{css2.1} 
exists on $\gL_\gt$ and is of the form \eqref{res}. 
Then  $\underline A_s$ has bounded imaginary powers on $\cH^{s,\gg}_p(\B)$ of angle $\gt$. 
\end{theorem}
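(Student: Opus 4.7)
My plan is to prove Theorem \ref{hibip} by induction on integer $s$, then cover non-integer $s$ by complex interpolation. The base case $s=0$ is supplied by Theorem \ref{css2}. For the interpolation step, the scale $\{\cH^{s,\gg}_p(\B)\}_{s\ge0}$ forms a complex interpolation scale — this is inherited from $\{H^s_p(\R^{n+1})\}_{s\ge0}$ via the map $\cS_\gg$ and the coordinate charts appearing in \eqref{norm}. Since BIP bounds $\|A^{it}\|\le Me^{\gt|t|}$ interpolate with the same exponential factor, establishing BIP on $\cH^{s,\gg}_p$ for all integer $s$ yields BIP on $\cH^{s,\gg}_p$ for all $s\ge 0$ with the same angle $\gt$. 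I therefore focus on the induction step: assuming BIP with angle $\gt$ on $\cH^{s-1,\gg}_p$, prove it on $\cH^{s,\gg}_p$.

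I choose a finite family $P_1,\ldots,P_N$ of first-order admissible differential operators (locally $x\partial_x$ and $\partial_{y_j}$ near the boundary, ordinary derivatives in the interior) providing the norm equivalence
$$\|u\|_{\cH^{s,\gg}_p}\sim \|u\|_{\cH^{s-1,\gg}_p}+\sum_{i=1}^N\|P_iu\|_{\cH^{s-1,\gg}_p}.$$
Boundedness of $A^{it}$ on $\cH^{s,\gg}_p$ then reduces to bounding each composition $P_iA^{it}\colon\cH^{s,\gg}_p\to\cH^{s-1,\gg}_p$. Writing
$$P_iA^{it}=A^{it}P_i+[P_i,A^{it}],$$
the first summand is controlled by the inductive hypothesis applied to $P_iu\in\cH^{s-1,\gg}_p$. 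The commutator is analyzed via the Dunford-integral representation
$$[P_i,A^z]=-\frac{1}{2\pi i}\int_\Gamma \lambda^z(\lambda-A)^{-1}[P_i,A](\lambda-A)^{-1}\,d\lambda,$$
valid for $\Re z<0$ on a Hankel contour $\Gamma$ encircling the spectrum, and continued analytically to $z=it$.

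A direct computation in local coordinates shows that $[P_i,A]$ is again a cone differential operator of order $\mu$: for $P_i=x\partial_x$ one obtains $[P_i,A]=-\mu A+R_i$ with $R_i=x^{-\mu}\sum_j(x\partial_x a_j)(x,y,D_y)(x\partial_x)^j$, and for $P_i=\partial_{y_j}$ one obtains $[P_i,A]=R_i$ with suitably differentiated coefficients. The $-\mu A$ contribution to the commutator integral collapses, via the identity $(\lambda-A)^{-1}A(\lambda-A)^{-1}=\lambda(\lambda-A)^{-2}-(\lambda-A)^{-1}$ together with integration by parts in $\lambda$, to a scalar multiple (polynomial in $z$) of $A^z$, which is controlled by the inductive hypothesis. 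The $R_i$ contribution is handled by exploiting that $R_i$ has the same algebraic type as $A$ in the cone calculus, so $R_i(\lambda-A)^{-1}$ fits the parameter-dependent Mellin and pseudodifferential structure of \eqref{res}; in particular, $\sigma=\lambda^{1/\mu}$ acts as an additional covariable in the classes $M_\cO^{-\mu,\mu}$, $L^{-\mu,\mu}_{cl}$ and $R_G^{-\mu,\mu}$, yielding enough decay in $\lambda$ for the integral to converge with growth of the form $(1+|t|)^ke^{\gt|t|}$ in $t$. Summing over $i$ and absorbing the polynomial factor into the exponential then gives BIP on $\cH^{s,\gg}_p$ with angle $\gt$, closing the induction.

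The main obstacle lies in the last step. Since $R_i$ has the same order $\mu$ as $A$, naive operator-norm estimates do not give $\lambda$-integrability of the integrand. One must carefully track how each ingredient of \eqref{res} — the holomorphic Mellin part $x^\mu\op_M^\gg(g)(\lambda)$, the classical parameter-dependent piece $P(\lambda)$, the Green operators $G(\lambda)$ and the smoothing remainder $G_\infty(\lambda)$ — composes with $R_i$, and how weights and Sobolev indices shift through these compositions. This amounts to promoting the resolvent calculus of \cite{CSS2} to the commutator level, and is where the technical work is concentrated.
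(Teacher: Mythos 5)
Your overall architecture (induction on integer $s$ via first-order vector fields $P_i$, interpolation for non-integer $s$, base case from Theorem \ref{css2}) is reasonable, and the algebra of $[x\partial_x,A]=-\mu A+R_i$ and the collapse of the $-\mu A$ term via $(\gl-A)^{-1}A(\gl-A)^{-1}=\gl(\gl-A)^{-2}-(\gl-A)^{-1}$ is correct. But the proof is not complete: the entire difficulty of the theorem is concentrated in the one step you defer, namely the uniform bound, as $\Re z\uparrow 0$, of $\int_\Gamma\gl^z(\gl-\underline A)^{-1}R_i(\gl-\underline A)^{-1}\,d\gl$ in $\cL(\cH^{s-1,\gg}_p(\B))$. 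Since $R_i$ has order $\mu$, the best naive estimate of the integrand is $O(|\gl|^{\Re z-1})$, which yields only $O(1/|\Re z|)$ and blows up on the imaginary axis; you acknowledge this and state that one must ``promote the resolvent calculus of \cite{CSS2} to the commutator level,'' but that promotion is precisely the content of the theorem and is not carried out. The paper avoids this composition problem entirely: it never forms $(\gl-A)^{-1}R_i(\gl-A)^{-1}$, but instead commutes $x\partial_x$ and $D_{y_j}$ through each of the four constituents of \eqref{res} separately (see \eqref{css2.2} and the kernel computation for $G(\gl)$), checks that each derivative stays in the same symbol or kernel class ($M_\cO^{-\mu,\mu}$, $R_G^{-\mu,\mu}$, etc.), and then quotes the $s=0$ estimates of \cite[Propositions 1--3]{CSS2} verbatim for the Dunford integral of each piece. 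That route requires no new $\gl$-decay beyond what is already encoded in the classes.

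Two further gaps. First, your commutator identity $[P_i,(\gl-\underline A_s)^{-1}]=(\gl-\underline A_s)^{-1}[\underline A_s,P_i](\gl-\underline A_s)^{-1}$ presupposes that $P_i$ maps $\cD(\underline A_s)=\cH^{s+\mu,\gg+\mu}_p(\B)\oplus\underline\cE$ into itself. The theorem is stated for an arbitrary subspace $\underline\cE$ of $\cE$, and $x\partial_x$ applied to $x^{-q}\log^kx\,\go(x)c(y)$ produces combinations (with lower logarithmic powers) that lie in $\cE$ but in general not in the chosen $\underline\cE$; for such $u$ one cannot apply $(\gl-\underline A_s)^{-1}(\gl-A)$ to $P_iu$ and recover $P_iu$. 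Second, bounded imaginary powers in the sense of Definition 2.2 presuppose $\underline A_s\in\cP(\gt)$, i.e.\ existence and $O(|\gl|^{-1})$ decay of the resolvent on $\cH^{s,\gg}_p(\B)$ itself; your proposal never addresses this (it is Steps 1 and 2 of the paper's proof, and is nontrivial in light of the $O(|\gl|^{M(s)})$ phenomenon from \cite[Theorem 6.36]{GKM} discussed in the introduction). A minor point: the factors polynomial in $z$ that you propose to ``absorb into the exponential'' cannot be absorbed at the fixed angle $\gt$; they only give $\mathcal{BIP}(\gt')$ for every $\gt'>\gt$, which is weaker than the stated conclusion.
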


\Proof{{\em Step} 1. 
First let us check that  the restriction of the resolvent of $\underline A_0$ to 
$\cH^{s,\gg}_p(\B)$ is the resolvent of $\underline A_s$. 
The identities 
$$(\underline A_0-\gl )^{-1}(A-\gl)= I\text{ and } (A-\gl)(\underline A_0-\gl )^{-1}=I$$
hold  on $\cD(\underline A_s) $ and $\cH^{s,\gg}_p(\B)$, respectively. Thus it suffices to show that 
$(\underline A_0-\gl )^{-1}(\cH^{s,\gg}_p(\B))\subseteq \cD(\underline A_s)$. 
So suppose $u\in \cD(\underline A_0)$ and $(A-\gl)u\in \cH^{s,\gg}_p(\B)$. 
We first claim that $u\in \cD(A_{\max,s})$.
Indeed, by \eqref{css2.1}, $u\in \cH^{\mu,\gg}_p(\B)$. If $s\le\mu$, we are done. Otherwise, we
have $u\in \cD(A_{\max,\mu})\subseteq \cH^{2\mu,\gg}_p(\B)$. After finitely many steps we reach the assertion. 
Hence $u$ is in the intersection of $\cD(A_{\max,s})$ and $\cD(\underline A_0)$, which is $\cD(\underline A_s)$. 

{\em Step} 2. We show that for arbitrary $s\ge 0$
$$(\underline A_s-\gl)^{-1} = O(|\gl|^{-1}), \quad \gl\in \gL_\gt.$$
We recall that the result was proven for $s=0$ in \cite[Proposition 1]{CSS2} making only 
use of the representation \eqref{res} of the resolvent. 
Using interpolation, it suffices to treat the case where $s$ is a positive integer 
and to show the corresponding estimates for the four types of operators appearing in 
the resolvent.

Ad (i). We write $M(\gl) = x^\mu \op_M^\gg(g)(\gl)$. 
By $\Gamma_\beta$ we denote the set $\{z\in\C: \Re z=\beta\}$.
For $u\in C^\infty_c(\R_{>0}, C^\infty(\partial\B))$ 
we then have
\begin{eqnarray*}
\lefteqn{(x\partial_x)\left(\op_M^\gg(g)(\gl)\right) (u)(x)}\\
&=& (x\partial_x)\int_{\gG_{\frac{n+1}2-\gg}}\int_0^\infty 
(x'/x)^z \widetilde g(x,z,x^\mu\gl) u(x')\, \frac{dx'}{x'} dz\\
&=& \int_{\gG_{\frac{n+1}2-\gg}}\int_0^\infty (x'/x)^z \left( 
(x\partial_x\widetilde g)(x,z,x^\mu\gl) + \mu x^\mu(\partial_\gl \widetilde g)(x,z,x^\mu\gl)
\right)u(x') \, \frac{dx'}{x'} dz\\
&&+\int_{\gG_{\frac{n+1}2-\gg}}\int_0^\infty (x'/x)^z \widetilde g(x,z,x^\mu\gl) (x'\partial_{x'}u)(x') \, \frac{dx'}{x'} dz.
\end{eqnarray*}  
Hence 
\begin{eqnarray}\label{css2.2}
(x\partial_x)(\go_1M(\gl) \go_2)=(\go_1 M(\gl) \go_2 )(x\partial_x) + \widetilde M(\gl)
\end{eqnarray}
with 
\begin{eqnarray*}
\widetilde M(\gl) &=&  \mu\, \go_1M(\gl)\go_2+ (x\partial_x\go_1) M(\gl)\go_2 + \go_1M(\gl) (x\partial_x\go_2) \\&&+ 
\go_1x^\mu \op_M^\gg((x\partial_x\widetilde g)(x,z,x^\mu\gl) + \mu x^\mu(\partial_\gl \widetilde g)(x,z,x^\mu\gl)) \go_2.
\end{eqnarray*}
The operator family $ \gl \widetilde M(\gl)$ is uniformly bounded on 
$\cH^{0,\gg}_p(\B)$, since it satisfies the same assumptions as $\go_1\gl M(\gl)\go_2$ above.

Moreover, the well-known commutator identity  $[D_{y_j},\op p]= \op(D_{y_j}p)$, valid for a 
pseudodifferential symbol $p=p(y,\eta)$  on $\R^n$, implies that we have in local coordinates
$$D_{y_j}M(\gl) =M(\gl)D_{y_j}+ x^\mu\op_M^\gg(g_1(x,z,x^\mu\gl))$$
for a symbol   $g_1\in C^\infty(\R_{\ge0},M^{-\mu,\mu}_{\cO}(\partial \B,\gL_\gt))$. 
Together with equation \eqref{css2.2} this shows that $\go_1\gl M(\gl)\go_2$ is uniformly bounded on $\cH^{1,\gg}_p(\B)$. 
Iteration then implies boundedness on $\cH^{s,\gg}_p(\B)$ for all $s\in \N$. 

Ad (ii). Away from the boundary, the space $\cH^{s,\gg}_p(\B)$ coincides with the usual Sobolev 
space of order $s$. The uniform boundedness of the operator family 
$\gl (1-\go_1)P(\gl)(1-\go_3)$ then is an immediate consequence of the well-known continuity 
result for pseudodifferential operators.

Ad (iii). In order to show the uniform boundedness of $\go_1\gl G(\gl)\go_2$ 
on $\cH^{s,\gg}_p(\B)$, it is sufficient to prove that $\go_1\gl (x\partial_x)^j D^\ga_y G(\gl)\go_2$ 
is uniformly bounded in $\cL( \cH^{0,\gg}_p(\B))$ whenever $j+|\ga|\le s$.
In order to see this, we simply note that, as a consequence of \eqref{kernel}, 
$(x\partial_x)^jD^\ga_yG(\gl)$ has the integral kernel
$$(x\partial_x)^sD^\ga_y k(\gl, x,y,x',y') = 
[\gl]^{(n+1)/\mu}((x\partial_x )^sD^\ga_y\widetilde k)(\gl, [\gl]^{1/\mu}x,y,[\gl]^{1/\mu}x',y').$$
As 
$$(x\partial_x )^jD^\ga_y\widetilde k\in 
S^{-1}_{cl}(\gL_\gt)\widehat\otimes_\pi \, \cS_0^{\gg+\gve}\, 
\widehat\otimes_\pi\, \cS_0^{\gg+\gve}, 
$$
we obtain the required boundedness from \cite[Proposition 1]{CSS2}.

Ad (iv). Elements of $C_G^{-\infty}(\B;\gL_\gt,\gg)$ clearly have operator norms on 
$\cH^{s,\gg}_p(\B)$ 
which are $O(|\gl|^{-N})$ for arbitrary $N$.  

{\em Step} 3. Finally we prove the required estimate for the purely imaginary powers of  $\underline A_s$. 
For $\Re z<0$, $A^z$ is defined by the Dunford integral
$$\frac i{2\pi}\int_{\cC}\gl^z(\underline A_s-\gl)^{-1}\,d\gl$$
where $\cC$ is a contour around $\gL_\gt$.

We have to show that, as $0<\Re z<1$, the norm of the Dunford integral in 
$\cL(\cH^{s,\gg}_p(\B))$  can be estimated by $Me^{\gt|\Im z|}$, uniformly in $\Re z$.   
To this end we estimate the norms of the four terms that arise by replacing 
the resolvent in the integrand by the terms in (i)-(iv). 

Ad (i).  
It is shown in \cite[Proposition 3]{CSS2} that 
$$g_z(x,y, (n+1)/2 -\gg +i\tau,\eta) 
= \go_1(x) x^\mu\int_{\cC}\gl^z\tilde g(x,y, (n+1)/2 -\gg +i\tau,\eta, x^\mu\gl)\, d\gl
$$ 
is a zero order Mellin symbol with respect to the line $\{\Re z = (n+1)/2-\gg\}$ (an element of $MS^0(\R_{>0}\times\R^n\times \gG_{(n+1)/2-\gg}\times\R^n)$ in the notation of \cite[Section10]{CSS2}).
Moreover, the symbol estimates of $e^{-\gt|\Im z|}g_z $ are proven to be uniform in $-1 \le \Re z < 0$. 
This implies the uniform boundedness of the operator norm also on $\cH^{s,\gg}_p(\B)$ for each $s$.  

Ad (ii). This is immediate from the result for standard pseudodifferential operators. 

Ad (iii). Conjugation by $x^\gg$ reduces the task to the case $\gg=0$. 
By definition $G(\gl)$ then is an integral operator with an integral kernel $k(\gl,x,y,x',y' )$
of the form given in \eqref{kernel} and \eqref{kernel2}.
According to the proof of  \cite[Proposition 2]{CSS2} this is sufficient to establish that the norms of the operators  $K_z$ with kernels
$$
k_z(x,y, x',y') =
\int_{\cC}\gl^z k(\gl,x,y, x',y') \, d\gl
$$
on $\cH^{0,0}_p(\B)$ are $O(e^{\gt|\Im z|})$, uniformly for $-1\le\Re z<0$. 

Now it is easy to infer much better mapping properties: For $s\in \N$, the  norms of $K_z: \cH^{0,0}_p(\B) \to \cH^{s,0}_p(\B)$ can be estimated by the operator norms on $\cH^{0,0}_p(\B)$ of the operators with kernels
$$(x\partial_{x})^jD^\ga_{y} k_z(x,y, x',y') =
\int_{\cC}\gl^z (x\partial_{x})^jD^\ga_{y} k(\gl,x,y, x',y') \, d\gl.$$   
Now  \eqref{kernel} and \eqref{kernel2} imply that $(x\partial_{x})^jD^\ga_{y} k(\gl,x,y, x',y')$ has the same structure as $k$, since $\cS^\gve_0$ is invariant under derivatives $(x\partial_x)^j$ and $D^\ga_y$. This implies the uniform boundedness of the operator norms.

Ad (iv). This follows immediately from the fact that $\gl\mapsto G_\infty(\gl)$ is rapidly decreasing with values in bounded operators on $\cH^{s,\gg}_p(\B)$ for each $s\in \R$.  

Hence we obtain the assertion.
}

\subsection{The model cone operator} For completeness, we recall the definition of the 
{\em model cone operator} $\widehat A$ associated with $A$. In the notation \eqref{conediffop}, it is given by 
\begin{eqnarray*}\widehat A = x^{-\mu} \sum_{j=0}^\mu a_j(0,y,D_y)(-x\partial_x)^j.
\end{eqnarray*}
It naturally acts on the spaces $ \cK^{s,\gg}_p(\R_{>0}\times\partial\B) $, $s,\gg\in \R, 1<p<\infty$. 
In order to introduce these, denote first by $H^s_{p,cone}(\R_{>0}\times\partial\B)$ 
the space of all tempered distributions $u$ on $\R_{>0}\times\partial\B$ which belong to the Sobolev space $H^s_p(\R_{>0}\times\partial\B)$ with respect to the cone metric $g_0=dx^2+x^2h(0)$ on $\R_{>0}\times\partial\B$. 
Then let 
$$\cK^{s,\gg}_p(\R_{>0}\times\partial\B) = \{u\in  \cS'(\R_{>0}\times\partial\B): \go u \in \cH^{s,\gg}_p(\B)
\text{ and } (1-\go)u\in H^s_{p,cone}(\R_{>0}\times\partial\B)\}.$$
Here $\go$ is an arbitrary cut-off function on $[0,1[\times \partial \B$. 

Similarly as for $A$ acting in the spaces $\cH^{s,\gg}_p(\B)$, 
one can study the closed extensions of $\widehat A$ in $\cK^{s,\gg}_p(\R_{>0}\times\partial\B)$. 
Under the assumption \eqref{conormal}, the 
domain of minimal extension $\widehat A_{\min,s}$ on $\cK^{s,\gg}_p(\R_{>0}\times\partial\B)$ is given by 
$\cK^{s+\mu,\gg+\mu}_p(\R_{>0}\times\partial\B)$ and the maximal domain is 
$$\cD(\widehat A_{\max,s})= \cK^{s+\mu,\gg+\mu}_p(\R_{>0}\times\partial\B) \oplus \widehat\cE,$$
where $\widehat \cE$ is a set of singular functions as in \eqref{sing}. 
If the operators $a_j(x,y,D_y)$ are independent of $x$ close to $x=0$, then $\widehat \cE$ 
coincides with the space $\cE$ in \eqref{Dmax}. 
In the general case, there is a 1-1-correspondence between $\widehat \cE$ and $\cE$; 
in particular both spaces have the same dimension.
For more details see \cite{GKM}.


\section{Higher Regularity for the Cahn-Hilliard Equation on Manifolds with Straight Conical 
Singularities}\label{straight}
\setcounter{equation}{0}

\subsection{The straight cone Laplacian in $\cH^{s,\gg}_p(\B)$}  \label{sc} 
In this section we assume that in a neighborhood of the boundary, say on $[0,1/2\,[\ \times \partial \B$, our Riemannian metric -- which we now denote by $g_0$ in order to distinguish it from the more general 
Riemannian metric $g$ used above -- is of the form 
$$g_0 = dx^2 + x^2h(0)$$  
with the metric $h$ in \eqref{e2}, so that $g_0$ models a manifold with a straight conical
singularity.

We write $\Delta_{0}$ for the Laplace-Beltrami operator on $\mathbb{B}$ with respect to $g_0$.
Our aim is to show that smoother data $u_0$ in the Cahn-Hilliard equation produce smoother short time solutions.
 
By $0=\gl_0>\lambda_1>\gl_2> \ldots$ we denote the different eigenvalues of $\gD_{h(0)}$. We recall that $\dim\B = n+1$ and that the conormal symbol of $\gD_0$ is the operator-valued polynomial
\begin{eqnarray}
z\mapsto \gs_M(\gD_0)(z) = z^2-(n-1)z + \gD_{h(0)}. 
\end{eqnarray}
It is invertible as an operator $H^2(\partial \B) \to L^2(\partial \B)$, provided
$z\not= q^\pm_j$, $j=0,1,2,\ldots$, where 
\begin{eqnarray}\label{qj}
q_j^\pm = \frac{n-1}2 \pm \sqrt{\left(\frac{n-1}{2}\right)^{2}-\lambda_{j}}.
\end{eqnarray} 
The inverse is given by 
\begin{eqnarray}\label{inverse1}
(\gs_M(\gD_0)(z))^{-1}=\sum_{j=0}^\infty \frac1{(z-q^+_j)(z-q^-_j)}\pi_j
\end{eqnarray}
with the orthogonal projection $\pi_j$ onto the eigenspace $E_j$ of $\gl_j$ in $L^2(\partial\B)$. 
As a pseudodifferential operator, it is then also invertible in $\cL(H^{s+2}_p(\partial \B),
H^s_p(\partial \B))$
for any $s\in\R, 1<p<\infty$.
With $q_j^\pm$ we associate the spaces  
$$\cE_{q_j^\pm}=\{x^{-q_j^\pm}\go(x)e(y):e\in E_j\},$$ 
except for the case, where $j=0$ and $\dim \B=2$
when, according to \eqref{inverse1}, we have a double pole in $q_0^\pm=0$  and let 
$$\cE_0=\{\go(x)e_0(y) + \log x\, \go(x)e_1(y): e_0,e_1\in E_0\}.$$
The asymptotics space in \eqref{Dmax} is then  
$$\cE = \bigoplus \cE_\rho$$
with $\rho$ ranging over the $q_j^\pm$ in $]\frac{n+1}2-\gg-2, \frac{n+1}2-\gg[$.

For arbitrary $s\ge 0$ and $1<p<\infty$ we consider $\gD_0$ as an unbounded operator in 
$\cH^{s,\gg}_p(\B)$.  
We fix a weight $\gg$ with 
\begin{gather}\label{fixgamma}
\frac{n-3}{2}<\gamma
<\min\left\{\frac{n-3}2+\overline \gve, \frac{n+1}{2}\right\},
\end{gather}
where 
\begin{eqnarray}\label{epsilonbar}
\overline \gve = -\frac{n-1}2 + \sqrt{\left(\frac{n-1}{2}\right)^{2}-\lambda_{1}}>0.
\end{eqnarray}
This implies in particular that $\gs_M(\gD_0)(z)$ is invertible whenever $\Re z = (n+1)/2-\gg$. 

By $\underline{\Delta}_{0}$ we denote the closed extension of $\Delta_{0}$ on $\cH^{s,\gg}_p(\B)$ with domain
\begin{gather}\label{dom_sgamma}
\mathcal{D}_s(\underline{\Delta}_{0})=\mathcal{H}_{p}^{s+2,\gamma+2}(\B)\oplus \C,
\end{gather}
where $\C$ stands for the constant functions. In this way, the domain of $\underline\gD_0$ (and hence that of $\underline\gD_0^2$) will consist of bounded functions only and will contain functions that do not vanish at the tip of the cone. 
For more information on the choice of $\gg$ see \cite[Section 2.3 and Proposition 3.1]{RS};
it is a special case of that in \cite[Theorem 5.7]{Sh}. 

We will now show that $\underline \gD_0$ has spectrum in $\R_{\le0}$. To this end we first note that $\gD_0$ is symmetric and bounded from above by zero on $C^\infty_c(\B^\circ)$ with respect to the scalar product on $\cH^{0,0}_2(\B)$ given by \eqref{measure} (for $h\equiv h(0)$) and thus has a Friedrichs extension with spectrum in $\R_{\le0}$.

 
\begin{theorem}\label{spectrum}
Let $\underline{\Delta}_0$ be the closed extension \eqref{dom_sgamma}
of $\Delta_0$ in $\cH^{s,\gg}_p(\B)$.
Then $\sigma(\underline{\Delta}_{0})\subseteq\mathbb{R}_{\le0}$.    
\end{theorem}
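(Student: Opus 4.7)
The plan is to deduce the spectrum of $\underline{\gD}_{0}$ from that of the Friedrichs extension $\gD_{0}^{F}$ of $\gD_{0}$ on $\cH^{0,0}_{2}(\B)$, which is non-positive self-adjoint by the remark preceding the theorem. The key mechanism for the transfer from the $L^2$-setting to general $\cH^{s,\gg}_{p}(\B)$ is the pseudodifferential representation \eqref{res} of the resolvent established in \cite{CSS2}.

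Concretely, fix $\lambda \in \C \setminus \R_{\le 0}$. Since $\gD_{0}^{F}$ is self-adjoint with spectrum in $\R_{\le 0}$, the resolvent $R_{\lambda}^{F} = (\gD_{0}^{F} - \lambda)^{-1}$ exists in $\cL(\cH^{0,0}_{2}(\B))$. My first step would be to verify, using \cite{CSS2} (whose hypotheses follow from \eqref{fixgamma} and the ellipticity of $\gD_{0}$), that $R_{\lambda}^{F}$ admits a pseudodifferential representation as in \eqref{res} in which the Green remainders produce asymptotics only in the constants; the specific asymptotics space is dictated by the poles of the conormal symbol in the admissible strip via \eqref{qj}--\eqref{inverse1} combined with \eqref{fixgamma}. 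Each building block of \eqref{res} then extends to a continuous map $\cH^{s,\gg}_{p}(\B) \to \cH^{s+2,\gg+2}_{p}(\B) \oplus \C$ by the standard mapping properties in the respective subcalculus (interior pseudodifferential, holomorphic Mellin, Green operator).

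Next I would check that the resulting extension $\widetilde R_{\lambda} \colon \cH^{s,\gg}_{p}(\B) \to \cD_{s}(\underline{\gD}_{0})$ is indeed the resolvent of $\underline{\gD}_{0}$. The identities $(\gD_{0} - \lambda) \widetilde R_{\lambda} = I$ and $\widetilde R_{\lambda} (\gD_{0} - \lambda) = I$ hold on the common core $C_{c}^{\infty}(\B^{\circ}) \oplus \C$, which is dense in $\cH^{s,\gg}_{p}(\B)$ and in $\cD_{s}(\underline{\gD}_{0})$ with respect to its graph norm, and extend by continuity to the full spaces. Hence $\widetilde R_{\lambda} = (\underline{\gD}_{0} - \lambda)^{-1}$, proving $\sigma(\underline{\gD}_{0}) \subseteq \R_{\le 0}$.

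The main obstacle will be verifying that the Friedrichs resolvent $R_{\lambda}^{F}$ on $\cH^{0,0}_{2}(\B)$ admits the pseudodifferential structure \eqref{res} with the Green part producing exactly the asymptotics $\C$ and nothing more. This requires matching the form description of the Friedrichs extension (where the constants automatically belong to the form domain because $|\nabla_{g_{0}} 1|_{g_{0}} = 0$) with the abstract singular-function description of closed extensions in the cone calculus, handling the low-dimensional case $\dim \B = 2$ separately (where \eqref{inverse1} has a double pole at $z = 0$ and an extra logarithmic singular function is discarded by our choice of domain), and using the threshold $\gg < (n-3)/2 + \overline{\gve}$ from \eqref{fixgamma} to exclude the higher exponents $q_{j}^{\pm}$, $j \ge 1$, from the admissible strip. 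Once this structural identification is secured, the spectral conclusion is a formal consequence of the transfer argument outlined above.
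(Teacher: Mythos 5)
Your strategy has a genuine gap at its central step: the claim that the Friedrichs resolvent $R^F_\lambda$ admits a representation \eqref{res} whose Green part produces ``asymptotics only in the constants'' is false in general. By \eqref{Friedrichs}, the domain of the Friedrichs extension is $\cD(\gD_{0,\min})\oplus\bigoplus_\rho\cE_\rho(\oplus\,\C)$ with $\rho$ running over \emph{all} non-invertibility points of the conormal symbol in $]-1,0[$ (for $\dim\B=2$) resp.\ $](n-3)/2,(n-1)/2]$ (for $\dim\B>2$); whenever $\gD_{h(0)}$ has eigenvalues close enough to $0$, some $q_j^-\neq 0$ falls into this window, and the corresponding $x^{-q_j^-}\go(x)e(y)$ lies neither in $\C$ nor in $\cH^{s+2,\gg+2}_p(\B)$ for $\gg$ as in \eqref{fixgamma}. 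So $\widetilde R_\lambda$ would not map $\cH^{s,\gg}_p(\B)$ into $\cD_s(\underline\gD_0)=\cH^{s+2,\gg+2}_p(\B)\oplus\C$, and the identity $\widetilde R_\lambda=(\underline\gD_0-\lambda)^{-1}$ cannot hold. The obstruction is structural, not technical: $\underline\gD_0$ and $\underline\gD_{0,F}$ are \emph{different} closed extensions with genuinely different domains, so their resolvents are different operators; no restriction or continuous extension of $R^F_\lambda$ can serve as the resolvent of $\underline\gD_0$. Your concluding density argument inherits the same problem, since $(\gD_0-\lambda)\widetilde R_\lambda=I$ on a core only helps if one already knows $\widetilde R_\lambda$ lands in the domain.

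The paper uses the Friedrichs extension in a much weaker--and therefore viable--way: only to exclude eigenvalues, never to construct the resolvent. It first reduces to $s=0$, $p=2$ via the $s,p$-independence of the Fredholm data of $\lambda-\gD_0$ (from \cite{ScSe}), then invokes \cite{Sh} to get invertibility with compact inverse for large $\lambda$ in sectors, so that every spectral point off $\R_{\le 0}$ is an eigenvalue. For an eigenfunction $u=u_0+\mu$ with $u_0\in\cH^{2,\gg+2}_p(\B)$, $\mu\in\C$, the equation gives $(\gD_0-\lambda)u_0=\lambda\mu$, a constant, which is regular enough to place $u_0$ in $\cD(\gD_{0,\max})\cap\cH^{2,1}_2(\B)\subseteq\cD(\underline\gD_{0,F})$; hence $u$ is an eigenfunction of the non-positive self-adjoint operator $\underline\gD_{0,F}$ and must vanish. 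Note that this eigenfunction trick sidesteps exactly the issue your proposal runs into: it only needs the \emph{inclusion} of one specific, very regular element into the Friedrichs domain, not a matching of the two extensions or of their resolvents. You would also need to supply surjectivity separately (via the index-zero Fredholm property), which your proposal bundles into the resolvent construction that does not go through.
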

 
\begin{proof} 
According to our choice of $\gamma$, the conormal symbol of $\gl-\gD$ is invertible on the line $\Re z =(n+1)/2-\gg$. 
By \cite[Corollaries 3.3 and 3.5]{ScSe}, $\gl-\gD$ has a parametrix in the cone calculus; 
moreover, it is a Fredholm operator in $\cL(\cH^{s+2,\gg+2}_p(\B),\cH^{s,\gg}_p(\B))$ for 
$1<p<\infty$ with index and kernel independent of $s$ and $p$. The same applies to the formal adjoint, which is also $\B$-elliptic, cf.\ \cite[Theorem 2.10]{ScSe}. 
Thus the  invertibility of
$$\gl-\underline \gD: \cH^{s+2,\gg+2}_p(\B)\oplus \C \to\cH^{s,\gg}_p(\B)
$$ is  independent of $s$ and $p$, and we may assume $s=0$, $p=2$.

We also know from \cite{Sh}, specifically Theorem 4.3 in connection with Theorem 5.7,  
that $\gl-\underline\gD_{0}$ is invertible with compact inverse for sufficiently large $\gl$ outside any sector containing $\R_{\le0}$. 
Hence any point in the spectrum is an  eigenvalue.

Let $\underline\gD_{0,F}$ be the Friedrichs extension of $\gD_0$. 
According to \cite[Corollary 5.4]{Sh}, its domain is
\begin{eqnarray}\label{Friedrichs}
\cD(\underline\gD_{0,F}) = \begin{cases}
\cD(\gD_{0,\min}) \oplus \bigoplus_\rho \cE_\rho\oplus \C,&\dim\B=2\\
\cD(\gD_{0,\min}) \oplus \bigoplus_\rho \cE_\rho,&\dim\B>2,
\end{cases}
\end{eqnarray}
where $\gD_{0,\min/\max}$ refers to the minimal/maximal extension on $\cH^{0,0}_2(\B)$ and the summation is over all $\rho$  in $]-1, 0[$\ in the first case and in $](n-3)/2,(n-1)/2]$ in the second. 

We note that $\C$ 
is always contained in $\cD(\underline\gD_{0,F})$. Indeed, for $n=1$ this is trivial. 
For $n=2$, $\C \subseteq \cE_0$, and for $n\ge3$, it is a subset of $\cD(\gD_{0,\min})$. 
Moreover,  $\cD(\underline\gD_{0,F})$ contains $\cD(\gD_{0,\max})\cap \cH^{2,1}_2(\B)$.

Let $\gl\notin \R_{\le0}$, $u\in \cD(\underline\gD_{0})$ and $(\gl-\gD_0)u=0$. 
Write $u = u_0+\mu$ with $u_0\in\cH^{2,\gg+2}_p(\B)$ and $\mu\in\C$. 
Then 
$\gD_0u_0-\gl u_0=\gl \mu\in \cH^{0,(n+1)/2-\gd}_2(\B)$ for all $\gd>0$. 
Hence $u_0$ belongs to the maximal domain of $\gD_0$ in $\cH^{0,(n+1)/2-\gd}_2(\B)$,
thus to $\cD(\gD_{0,\max})\cap \cH^{2,1}_2(\B)\subseteq 
\cD(\underline\gD_{0,F})$ in view of the fact that $2+\gg>(n+1)/2\ge1$.  
We conclude that $u$ belongs to the domain of the Friedrichs
extension and therefore is zero. 
Hence $\gl-\underline\gD_0:\cH^{2,\gg+2}_2(\B)\oplus\C\to \cH^{0,\gg}_2(\B)$ is invertible and so
is $\gl-\underline\gD_0:\cH^{s+2,\gg+2}_p(\B)\oplus\C\to \cH^{s,\gg}_p(\B)$.
\end{proof}

The following theorem extends the statements in \cite{RS} to the case of higher $s$ and arbitrary $c>0$.

\begin{theorem}\label{elldomain3} Let $c>0$, $\gt\in[0,\pi[$, $\phi>0$ and $\gg$, $\underline{\Delta}_{0}$ be chosen as above.
Then $c-\underline{\Delta}_{0}\in \cP(\gt)\cap\mathcal{BIP}(\phi)$ on $\cH^{s,\gg}_p(\B)$ for any $s\ge0$.
\end{theorem}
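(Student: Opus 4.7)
The plan is to reduce the assertion to the lifting theorem \ref{hibip}. Concretely, I would (a) establish sectoriality of $c-\underline{\gD}_0$ on $\cH^{0,\gg}_p(\B)$ together with the representation \eqref{res} of its resolvent, (b) invoke Theorem \ref{css2} to obtain bounded imaginary powers on $\cH^{0,\gg}_p(\B)$, and (c) apply Theorem \ref{hibip} to propagate the boundedness of imaginary powers to every $s\geq 0$.

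For (a), Theorem \ref{spectrum} yields $\gs(\underline{\gD}_0)\subseteq \R_{\leq 0}$, hence $\gs(c-\underline{\gD}_0)\subseteq [c,\infty)$. Since $c>0$, for any $\gt\in[0,\pi[$ and any $\gd\in(0,c)$ the sector $\gL_\gt$ lies in the resolvent set of $c-\underline{\gD}_0$. The decay $\|(c-\underline{\gD}_0-\gl)^{-1}\|=O(|\gl|^{-1})$ on $\gL_\gt$ and, more importantly, the explicit representation \eqref{res} of the resolvent are furnished by the cone-calculus parametrix construction performed in \cite{RS} and \cite{CSS2} for the cone Laplacian. Passing from $-\underline{\gD}_0$ to $c-\underline{\gD}_0$ merely translates the spectral parameter by $c$; since $c$ is a bounded zero-order perturbation, the principal, rescaled principal, and conormal symbols of $\gD_0$ are unchanged, and all four ingredients (i)--(iv) of \eqref{res} remain in their stated classes. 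Note also that, under our choice of $\gg$, the constant functions lie in $\cE$, so $\underline{\cE}=\C$ is an admissible asymptotics space. Steps (b) and (c) are then immediate: Theorem \ref{css2}, applied with $\underline A_0 = c-\underline{\gD}_0$ and $\underline{\cE}=\C$, gives $c-\underline{\gD}_0\in\cP(\gt)\cap\mathcal{BIP}(\phi)$ on $\cH^{0,\gg}_p(\B)$, and Theorem \ref{hibip}, whose hypotheses are exactly this representation together with the domain structure $\cH^{s+2,\gg+2}_p(\B)\oplus\C$ fixed in \eqref{dom_sgamma}, lifts the conclusion to $\cH^{s,\gg}_p(\B)$ for every $s\geq 0$.

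The only point of real substance is the verification in (a) that the cone-calculus representation \eqref{res} persists after shifting the spectrum by $c$. This is essentially a bookkeeping check, but it does require confirming that the holomorphic Mellin symbol $\tilde g(x,z,x^\mu\gl)$, the interior pseudodifferential piece $P(\gl)$, and the Green remainders $G(\gl)$ and $G_\infty(\gl)$ all remain in their respective symbol and kernel classes under the substitution $\gl\mapsto\gl-c$. Once this is in place, the conclusion is a direct concatenation of Theorems \ref{spectrum}, \ref{css2}, and \ref{hibip}, and, in particular, the upgrade from the $s=0$ case treated in \cite{RS} to arbitrary $s\geq 0$ is entirely carried by Theorem \ref{hibip}.
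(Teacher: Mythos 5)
Your overall architecture coincides with the paper's: obtain the representation \eqref{res} of the resolvent of $c-\underline\gD_0$ on $\cH^{0,\gg}_p(\B)$ and then let Theorem \ref{hibip} carry the boundedness of imaginary powers to all $s\ge 0$ (your intermediate appeal to Theorem \ref{css2} is harmless but redundant, since Theorem \ref{hibip} already contains the case $s=0$). The identification $\underline\cE=\C$ under \eqref{fixgamma} and the use of Theorem \ref{spectrum} to place $\gL_\gt$ in the resolvent set are also as in the paper.

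The gap is in your step (a), precisely at the point you defer as ``bookkeeping'': the claim that \eqref{res} holds for \emph{every} $c>0$ because passing to $c-\underline\gD_0$ ``merely translates the spectral parameter.'' The parametrix construction of \cite{Sh}, \cite{CSS2} yields \eqref{res} only for $c$ large: for $|\gl|$ large in the sector the symbolic inversion works for any $c$, but $\gL_\gt$ also contains the disk $|\gl|\le\gd$, and there one needs genuine invertibility together with the statement that the correction is of type $G_\infty$; this is what forces $c$ large in \cite[Theorem 5.7]{Sh}. A translation argument cannot repair this: to reduce a small $c$ to a large $c_0$ one writes $(c-\underline\gD_0-\gl)^{-1}=(c_0-\underline\gD_0-(\gl+c_0-c))^{-1}$, and the shift $\gl\mapsto\gl+(c_0-c)$ moves $\gL_\gt$ \emph{towards} the positive real axis, i.e.\ out of the sector on which \eqref{res} is known. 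Moreover the classes in (i)--(iv) are anisotropic in $\gl$ (the Mellin symbol depends on $x^\mu\gl$, the Green kernels scale with $[\gl]^{1/\mu}$ as in \eqref{kernel}), so even set-theoretically admissible translations do not obviously preserve membership. The paper avoids all of this: it invokes \cite[Theorem 5.7]{Sh} together with \cite[Remark 2.10]{RS} to get \eqref{res} \emph{for large $c$ only}, deduces $c-\underline\gD_0\in\cP(\gt)$ for all $c>0$ from Theorem \ref{spectrum} and the arbitrariness of $\gt$, and then obtains $\mathcal{BIP}(\phi)$ for all $c>0$ from the large-$c$ case by a shift of the integration contour in the Dunford integral (legitimate because the resolvent is holomorphic off $\R_{\le 0}$). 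You need either that contour-shift argument or an actual verification of \eqref{res} for small $c$; as written, neither is supplied.
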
 

\Proof{It follows from \cite[Theorem 5.7]{Sh} in connection with \cite[Remark 2.10]{RS}
that the resolvent of $c-\underline \gD_0$ on $\cH^{0,\gg}_p(\B)$ has the structure required
in Theorem \ref{hibip} for large $c>0$. As the resolvent moreover exists outside $\R_{\le0}$ by Theorem \ref{spectrum} and $\gt\in[0,\pi[$ is arbitrary, we conclude that $c-\underline \gD_0\in \cP(\gt)$ for all $c>0$. Also, $c-\underline \gD_0\in \mathcal{BIP}(\phi)$ 
by a shift of the  integration contour.
}

\subsection{The associated domain of  $\underline\gD_0^2$} \label{domd2}
We first recall the basic facts from  \cite[Section 3.2]{RS}. 
The conormal symbol of $\gD_0^2$ is given by 
\begin{eqnarray*}
\gs_M(\gD_0^2)(z) = \gs_M(\gD_0)(z+2)\gs_M(\gD_0)(z).
\end{eqnarray*} 
By \eqref{inverse1}, its inverse is 
\begin{eqnarray}\label{inverse}
(\gs_M(\gD_0^2)(z))^{-1} &=&   
\sum_{j=0}^\infty\frac{1}{(z-q_j^+)(z-q_j^-)(z+2-q_j^+)(z+2-q_j^-)}\pi_j.
\end{eqnarray}
It is meromorphic with poles of order $\le 2$ in the 
points  $z=q_j^\pm$ and $z=q_j^\pm-2$. 

The domain of the maximal extension 
of $\gD_0^2$ on $\cH^{s,\gg}_p(\B)$ then is of the form  
$$\cD((\gD_0^2)_{\min,s})\oplus \bigoplus_\rho \tilde\cE_\rho.$$
Here $\cD((\gD_0^2)_{\min,s})\subseteq  \cH^{4,\gg+4-\gve}_p(\B)$ for all $\gve>0$, $\rho$ runs over the poles of $\gs_M(\gD_0^2)$   between 
$\dim \B/2-4-\gg$ and $\dim\B/2-\gg$, and the spaces $\tilde\cE_\rho$ are of the form 
\begin{eqnarray*} 
\tilde\cE_\rho = \{x^{-\rho}\log x\go(x)\, e_1(y)  + x^{-\rho}\go(x)\, e_0(y)\}
\end{eqnarray*}
with $e_0$ and $e_1$ in the corresponding eigenspace $E_j$ (the logarithmic terms will only appear for a double pole in $\rho$).

By assumption $\cD(\underline \gD_0^2)$ is a subset of $\cD(\underline\gD_0)= \cH^{2,\gg+2}_p(\B)\oplus\C$. 
In view of the fact that a function $u(x,y) = x^{-\rho}\log x\go(x)e(y)$ with $0\not=e\in C^\infty(\partial \B)$ belongs to $\cH^{\infty,\gg+2}_p(\B)$ if and only if $\Re \rho < \dim\B/2-\gg-2$,
we obtain: 

\begin{lemma}\label{domdelta2} The domain of $\underline\gD_0^2$ as an unbounded operator
on $\cH^{s,\gg}_p(\B)$  is given by 
\begin{eqnarray*}
\cD(\underline\gD_{0,s}^2)= 
\cD(\underline \gD_{0,\min,s}^2) \oplus \bigoplus_\rho \tilde \cE_\rho\oplus \C
\end{eqnarray*}
with the summation now over $\dim \B/2 -\gg-4< \rho<\dim \B/2-\gg-2$. 
\end{lemma}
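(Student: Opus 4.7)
The strategy is to apply the defining characterization
$$\cD(\underline\gD_0^2) = \{u\in\cD(\underline\gD_0): \underline\gD_0 u\in\cD(\underline\gD_0)\}$$
to the structural decomposition of the maximal domain of $\gD_0^2$ on $\cH^{s,\gg}_p(\B)$ recalled in Section \ref{domd2},
$$\cD((\gD_0^2)_{\max,s}) = \cD((\gD_0^2)_{\min,s})\oplus\bigoplus_\rho\tilde\cE_\rho,$$
where $\rho$ ranges over the poles of $(\gs_M(\gD_0^2))^{-1}$ in the open strip $\dim\B/2-\gg-4<\Re\rho<\dim\B/2-\gg$. As a preliminary I would sandwich $\underline\gD_0^2$ between the minimal and maximal extensions: the inclusion $\cD(\underline\gD_0^2)\subseteq\cD((\gD_0^2)_{\max,s})$ is immediate from $\gD_0^2 u=\underline\gD_0(\underline\gD_0 u)\in\cH^{s,\gg}_p(\B)$, while any $u\in C_c^\infty(\B^\circ)$ satisfies both $u$ and $\gD_0 u$ in $\cD(\underline\gD_0)$, and taking the closure in the graph norm of $\gD_0^2$ yields $\cD(\underline\gD_{0,\min,s}^2)\subseteq\cD(\underline\gD_0^2)$.

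Next I would decompose an arbitrary $u\in\cD(\underline\gD_0^2)$ as $u_0+\tilde e$ with $u_0\in\cD((\gD_0^2)_{\min,s})$ and $\tilde e=\sum_\rho e_\rho$ a finite sum of singular terms, and translate the two membership conditions into constraints on $\tilde e$. Since the minimal part $u_0$ lies in $\cH^{s+4,\gg+4-\gve}_p(\B)\subset\cH^{s+2,\gg+2}_p(\B)$ for small $\gve>0$, and $\gD_0$ carries it again into $\cD(\underline\gD_0)$, both defining conditions are automatically satisfied by $u_0$ and reduce to conditions on $\tilde e$ alone. The first condition, combined with the criterion recalled just before the lemma --- namely that $x^{-\rho}\log^k x\,\go(x)e(y)$ lies in $\cH^{\infty,\gg+2}_p(\B)$ precisely when $\Re\rho<\dim\B/2-\gg-2$ --- restricts the admissible poles to the narrower strip $\dim\B/2-\gg-4<\Re\rho<\dim\B/2-\gg-2$. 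The only singular behaviour inside $\cD(\underline\gD_0)$ not already captured by $\cH^{s+2,\gg+2}_p(\B)$ consists of constants, which must therefore be added separately as the $\oplus\C$ summand.

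The second condition $\gD_0\tilde e\in\cD(\underline\gD_0)$ is then verified via the Mellin identity
$$\gD_0\bigl(x^{-\rho}\log^k x\,\go(x)e(y)\bigr) = x^{-\rho-2}\bigl(\gs_M(\gD_0)(\rho)\log^k x + k\,\gs_M(\gD_0)'(\rho)\log^{k-1}x + \cdots\bigr)e$$
modulo terms in the minimal domain of $\gD_0$, together with the factorization $\gs_M(\gD_0^2)(z)=\gs_M(\gD_0)(z+2)\gs_M(\gD_0)(z)$ and the residue description of the asymptotic subspaces $\tilde\cE_\rho$. The main technical obstacle will be the careful bookkeeping at the exceptional pole $\rho=-2$, where $\gD_0$ produces a non-trivial constant that has to be absorbed by the $\C$-summand of $\cD(\underline\gD_0)$, together with the treatment of possible double poles and their accompanying logarithmic factors arising when $\dim\B=2$. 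Once these delicate cases are handled, the stated direct-sum decomposition follows.
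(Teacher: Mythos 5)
Your overall strategy coincides with the paper's: the proof given there consists precisely of the two observations made just before the lemma, namely that $\cD(\underline\gD_0^2)\subseteq\cD(\underline\gD_0)=\cH^{s+2,\gg+2}_p(\B)\oplus\C$ and that a singular function $x^{-\rho}\log^kx\,\go(x)e(y)$ lies in $\cH^{\infty,\gg+2}_p(\B)$ exactly when $\Re\rho<\dim\B/2-\gg-2$, so that intersecting the maximal domain of $\gD_0^2$ with $\cD(\underline\gD_0)$ narrows the range of $\rho$ as claimed and leaves the constants as a separate summand. Your sandwich between the minimal and maximal extensions and the reduction to the singular part $\tilde e$ is a more explicit write-up of exactly this argument (with the minor caveat that $\gD_0u_0\in\cD(\underline\gD_0)$ for $u_0$ in the minimal domain should be justified by the inclusion $\cD((\gD_0^2)_{\min,s})\subseteq\cD(\underline\gD_0^2)$ you already established, not by the mapping property $\gD_0:\cH^{s+4,\gg+4-\gve}_p\to\cH^{s+2,\gg+2-\gve}_p$, which does not land in $\cH^{s+2,\gg+2}_p$).

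The gap is in your final paragraph. You rightly point out -- going beyond what the paper records -- that membership in $\cD(\underline\gD_0^2)=\{u\in\cD(\underline\gD_0):\gD_0u\in\cD(\underline\gD_0)\}$ imposes the second condition $\gD_0\tilde e\in\cD(\underline\gD_0)$, but you only assert that it ``is then verified'' and that the delicate cases ``can be handled''. This is exactly the step that does not follow formally. Modulo terms supported away from $x=0$ one has $\gD_0\bigl(x^{-\rho}\go(x)e(y)\bigr)=\gs_M(\gD_0)(\rho)e\cdot x^{-(\rho+2)}\go(x)$ for $e\in E_j$. If $\rho=q_j^\pm$ this vanishes and there is nothing to check; but if $\rho=q_j^\pm-2$ with $q_j^\pm$ in the critical strip $]\dim\B/2-\gg-2,\dim\B/2-\gg[$, the coefficient $\gs_M(\gD_0)(q_j^\pm-2)e=(-2)(q_j^\pm-2-q_j^\mp)e$ is in general nonzero, and the resulting term $x^{-q_j^\pm}\go\,e'$ belongs to $\cD(\underline\gD_0)=\cH^{s+2,\gg+2}_p(\B)\oplus\C$ only if it is a constant, i.e.\ only if $q_j^\pm=0$. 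For $\dim\B\ge4$ (and connected cross-section) condition \eqref{fixgamma} forces $q_0^-=0$ to be the only non-invertibility point in that strip, so the only case is $\rho=-2$, where $\gD_0(x^2\go e_0)=2(n+1)e_0\go+\cdots\in\C+\cH^{\infty,\infty}_p(\B)$ and the check succeeds. For $\dim\B=2,3$, however, other $q_j^\pm$ can enter the strip (e.g.\ $q_0^+=1$ when $\dim\B=3$ and $\gg<1/2$, for which $\gD_0(x\go)=2x^{-1}\go+\cdots\notin\cD(\underline\gD_0)$), and then $\tilde\cE_{q_j^\pm-2}$ is \emph{not} contained in $\cD(\underline\gD_0^2)$. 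Your proof therefore has to carry out this case analysis explicitly and either rule out the offending poles under the standing hypotheses or modify the asserted direct sum accordingly; as written, the proposal stops precisely where the real work, and the only possible failure, lies.
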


\subsection{Interpolation spaces}\label{interpol}
We want to treat the Cahn-Hilliard equation in $\cH^{s,\gg}_p(\B)$ for $s\ge0$ with the help
of Clément and Li's Theorem \ref{CL}. 
To this end we have to come to a good understanding of the real interpolation space 
$X_q = (X_1,X_0)_{1/q,q}$ for $X_0= \cH^{s,\gg}_p(\B)$ and 
$X_1= \cD(\underline \gD_{0,s}^2)$.
Let us suppose that 
$$ 2<q<\infty.$$
For real $\eta$ with $1/2<\eta<1-1/q$ and $c>0$ we then have 
\begin{eqnarray}\label{e32}
X_{q}=(X_{0},X_{1})_{1-\frac{1}{q},q}
&\hookrightarrow&[X_{0},X_{1}]_{\eta}
=[\cH^{s,\gg}_p(\B), \cD(\underline{\Delta}_{0,s}^2)]_{\eta}=
[\cD((c-\underline{\Delta}_{0,s})^0),\cD((c-\underline{\Delta}_{0,s})^2) ]_\eta\\
&=&[\cD((c-\underline{\Delta}_{0,s})),\cD((c-\underline{\Delta}_{0,s})^2) ]_{2\eta-1} \nonumber
\end{eqnarray}
by the reiteration theorem for interpolation, cf. \cite[(I.2.8.4)]{Am}, using the boundedness of the imaginary powers. See Section \ref{5.4} for an alternative way.

Since $c-\underline{\Delta}\in\mathcal{BIP}(\phi)$, $\phi>0$, 
we apply  (I.2.9.8) in \cite{Am} and obtain
\begin{gather}\label{e62}
[\cD((c-\underline{\Delta}_{0,s})^{0}),\cD((c-\underline{\Delta}_{0,s})^{2})]_{\eta}
\hookrightarrow\cD((c-\underline{\Delta}_{0,s})^{(1-\eta)0+2\eta})
=\cD((c-\underline{\Delta}_{0,s})^{2\eta}).
\end{gather} 
In particular, as $2\eta>1$, we have 
\begin{eqnarray}\label{embedding}
X_q\hookrightarrow \cD(\underline \gD_{0,s}).
\end{eqnarray}
 
\subsection{The linearized equation}
We write the Cahn-Hilliard equation in the form 
$$\partial_t u + A(u) u = F(u), \quad u(0) = u_0$$
with 
\begin{eqnarray}\label{defA}A(v)u
&=&\underline \Delta_0^{2}u+\underline\Delta _0u-3v^{2}\underline\Delta_0 u \,\,\, \mbox{and}\\
F(u)&=&-6u(\nabla u,\nabla u)_{g_0},\label{defF}
\end{eqnarray}
where
\begin{eqnarray}\label{defnabla}
(\nabla u,\nabla  v)_{g_0} = 
\frac1{x^2}\left((x\partial_x u)(x\partial_x v)
+\sum h^{ij}(0)(\partial_{y^i} u)(\partial_{y^j}v)\right).
\end{eqnarray}
Our next goal is to establish the existence of bounded imaginary powers for $A(u_0)$, $u_0\in X_q$, $q>2$. To this end we use the following lemma. A proof can be found in 
\cite[Lemma 3.6]{RS}.

\begin{lemma}\label{l1}
Let $X_0$ be a Banach space and $T\in\mathcal{P}(\theta)$ in the sense of Definition 
$\ref{sectorial}$ with $\theta\geq\pi/2$. 
Then $T^{2}\in\mathcal{P}(\tilde \theta)$ for $\tilde\theta= 2\theta-\pi$ 
and $(T^{2})^{z}=T^{2z}$ for $z\in \C$.
\end{lemma}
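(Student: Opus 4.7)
The plan is to pass through the algebraic factorization
\begin{equation*}
T^2 + z = (T + i\sqrt{z})(T - i\sqrt{z}),
\end{equation*}
valid on $\mathcal{D}(T^2)$ with $\sqrt{\cdot}$ the principal branch, and combine it with the Dunford functional calculus. First I would verify sectoriality of $T^2$. For $z \in S_{\tilde\theta}$ with $\tilde\theta = 2\theta - \pi$, one has $|\arg\sqrt{z}| \le \theta - \pi/2$, hence
\begin{equation*}
|\arg(\pm i\sqrt{z})| \le \tfrac{\pi}{2} + \bigl(\theta - \tfrac{\pi}{2}\bigr) = \theta,
\end{equation*}
so $\pm i\sqrt{z} \in S_\theta$. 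Since $T \in \mathcal{P}(\theta)$, the resolvents $(T \pm i\sqrt{z})^{-1}$ exist and commute, and the factorization yields $(T^2 + z)^{-1} = (T+i\sqrt{z})^{-1}(T-i\sqrt{z})^{-1}$ together with
\begin{equation*}
(1+|z|)\,\|(T^2+z)^{-1}\| \le \frac{(1+|z|)\,K_\theta^2}{(1+|\sqrt{z}|)^2} \le \tilde K_\theta,
\end{equation*}
using $(1+\sqrt{|z|})^2 \ge 1 + |z|$. The case $z = 0$ is covered by the fact that $T \in \mathcal{P}(\theta)$ is already invertible, so $T^2$ is invertible as well. This gives $T^2 \in \mathcal{P}(\tilde\theta)$.

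For the identity $(T^2)^z = T^{2z}$, I would first treat $\Re z < 0$ via the Dunford integral
\begin{equation*}
(T^2)^z = \frac{i}{2\pi}\int_{\mathcal{C}'} \mu^z (T^2 - \mu)^{-1}\, d\mu,
\end{equation*}
with $\mathcal{C}'$ a contour encircling $\sigma(T^2)$ in the resolvent set. Substituting $\mu = \lambda^2$ and using the partial fraction identity
\begin{equation*}
(T^2 - \lambda^2)^{-1} = \frac{1}{2\lambda}\bigl[(T-\lambda)^{-1} - (T+\lambda)^{-1}\bigr],
\end{equation*}
splits the integral into two pieces. A contour deformation, exploiting that $-\sigma(T)$ lies to the right of the splitting ray while $\sigma(T)$ is on the left, collapses one piece to zero and converts the other to the Dunford representation of $T^{2z}$. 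The identity then extends to all $z \in \mathbb{C}$ by the standard construction of complex powers in \cite[III.4.6.5]{Am}, or directly by the composition rule of the Dunford calculus applied to $w \mapsto w^2$ followed by $\mu \mapsto \mu^z$.

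The main obstacle will be the rigorous contour manipulation in the second step: one must track carefully how the image of $\mathcal{C}'$ under the principal square root deforms to a contour around $\sigma(T)$, and verify that boundary contributions at infinity and near the branch cut vanish using the resolvent decay inherited from sectoriality. The sectoriality half, by contrast, is essentially a direct computation based on the factorization.
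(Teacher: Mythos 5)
Your proposal is correct and follows essentially the same route as the paper: the paper does not reprove the lemma here but defers to \cite[Lemma 3.6]{RS}, whose argument is exactly the factorization $T^2+z=(T+i\sqrt z)(T-i\sqrt z)$ with $\pm i\sqrt z\in S_\theta$ (the same device reappears in \eqref{e12a} in the proof of Proposition \ref{RBPA}), followed by the substitution $\mu=\lambda^2$ and partial fractions in the Dunford integral, with the $(T+\lambda)^{-1}$ term killed by Cauchy's theorem. Your sketch captures both halves faithfully, including the resolvent bound via $(1+|z|^{1/2})^2\ge 1+|z|$, so no further comparison is needed.
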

 
\begin{proposition}\label{p1}Let $s\ge0$, $p> (n+1)/2$, and $q>2$. 
For every choice of  
$u_0\in X_q$, $\phi>0$, and $\gt\in[0,\pi[$,
the operator $A(u_0)+c_0I$, 
considered as an unbounded operator in 
$\mathcal{H}^{s,\gamma}_{p}(\mathbb{B})$ with domain 
$\cD(\underline{\Delta}_{0,s}^{2})$ belongs to $\cP(\gt)\cap\mathcal{BIP}(\phi)$ 
for all sufficiently large $c_0>0$.
\end{proposition}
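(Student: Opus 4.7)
My approach is to realize the operator $A(u_0) + c_0 I$ as a small perturbation of the shifted bilaplacian $(c - \underline{\Delta}_0)^2$ for a suitably large auxiliary constant $c > 0$, and to transfer the $\mathcal{BIP}$ and sectoriality already established for the Laplacian to this perturbed fourth-order operator.

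First, Theorem~\ref{elldomain3} gives, for any $\phi > 0$ and any $\theta' \in [0, \pi)$, that $c - \underline{\Delta}_0 \in \cP(\theta') \cap \mathcal{BIP}(\phi/2)$ on $\cH^{s,\gg}_p(\B)$. Choosing $\theta'$ close enough to $\pi$, Lemma~\ref{l1} upgrades this to $(c - \underline{\Delta}_0)^2 \in \cP(\theta) \cap \mathcal{BIP}(\phi)$ for the given $\theta$. Expanding the square together with definition \eqref{defA}, I would then write
\begin{equation*}
A(u_0) + c_0 I \;=\; (c - \underline{\Delta}_0)^2 \;+\; (1 + 2c - 3u_0^2)\,\underline{\Delta}_0 \;+\; (c_0 - c^2)\,I,
\end{equation*}
so that the task reduces to controlling the last two terms as a small perturbation of $(c - \underline{\Delta}_0)^2$.

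The key auxiliary fact is the boundedness of the multiplication operator $M_{u_0^2}$ on $\cH^{s,\gg}_p(\B)$. By the embedding \eqref{embedding}, $u_0 \in X_q$ sits inside $\cD(\underline{\Delta}_{0,s}) = \cH^{s+2,\gg+2}_p(\B) \oplus \C$; the hypothesis $p > (n+1)/2$ together with the choice \eqref{fixgamma} of $\gg$ guarantees an embedding $\cH^{s+2,\gg+2}_p(\B) \hookrightarrow \cH^{s + (n+1)/p + \gve, (n+1)/2}_p(\B)$ for some $\gve > 0$, so Corollary~\ref{c0} places $u_0^2$ in the same scale and Corollary~\ref{c2} delivers the desired multiplier bound. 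Combined with the moment inequality
\begin{equation*}
\|\underline{\Delta}_0 u\|_{\cH^{s,\gg}_p(\B)} \;\le\; \eta\, \|(c - \underline{\Delta}_0)^2 u\|_{\cH^{s,\gg}_p(\B)} \;+\; C_\eta\, \|u\|_{\cH^{s,\gg}_p(\B)},
\end{equation*}
valid for any $\eta > 0$ as a consequence of the sectoriality of $c - \underline{\Delta}_0$, the remainder becomes relatively bounded by $(c - \underline{\Delta}_0)^2$ with arbitrarily small relative bound, provided $c_0$ is chosen sufficiently large.

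A standard perturbation theorem for $\mathcal{BIP}$ in the Kalton--Weis / Pr\"uss--Sohr tradition then preserves both $\cP(\theta)$ and $\mathcal{BIP}(\phi)$ under such a ``half-order'' small perturbation, yielding $A(u_0) + c_0 I \in \cP(\theta) \cap \mathcal{BIP}(\phi)$. The main obstacle I foresee lies precisely in this final step: sectoriality alone follows from a routine Neumann-series argument, but preserving the bound on the purely imaginary powers requires a careful use of the $H^\infty$-calculus of $(c - \underline{\Delta}_0)^2$ together with the moment estimate on its half-power, and this verification has to be carried out in the concrete Mellin--Sobolev setting where the multiplier $M_{u_0^2}$ introduces non-smooth coefficients.
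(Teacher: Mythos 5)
Your proposal is correct, and its genuinely new ingredient coincides with the paper's: the only thing that has to be added to the known $s=0$ case is the boundedness of multiplication by $u_0^2$ on $\cH^{s,\gg}_p(\B)$, which you obtain, exactly as the paper does, from the embedding \eqref{embedding} $X_q\hookrightarrow \cD(\underline\gD_{0,s})=\cH^{s+2,\gg+2}_p(\B)\oplus\C$ together with $s+2>s+(n+1)/p$ (from $p>(n+1)/2$), $\gg+2\ge(n+1)/2$ (from \eqref{fixgamma}), and Corollaries \ref{c0} and \ref{c2}. Where you differ is in presentation: the paper's proof is a one-line reduction, citing \cite[Proposition 3.7]{RS} for the base case $X_0=\cH^{0,\gg}_p(\B)$, $u_0\in L^\infty$, and observing that the argument there transfers verbatim once the multiplier bound is available; you instead reconstruct the underlying perturbation argument from scratch -- writing $A(u_0)+c_0I=(c-\underline\gD_0)^2+(1+2c-3u_0^2)\underline\gD_0+(c_0-c^2)I$, invoking Lemma \ref{l1} and Theorem \ref{elldomain3} for the unperturbed square, and treating the middle term as a half-order perturbation via the moment inequality. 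That is essentially the mechanism used in \cite{RS} (and mirrored in Proposition \ref{RBPA} of this paper for the $R$-sectorial analogue), so your route is sound and more self-contained, at the price of having to import a Pr\"uss--Sohr/Amann-type theorem on preservation of $\mathcal{BIP}$ under lower-order perturbations -- which, as you correctly flag, is the one nontrivial step, and is precisely what the citation to \cite{RS} supplies. One cosmetic remark: such perturbation theorems may enlarge the power angle by an arbitrarily small $\eps$, so you should start from $\mathcal{BIP}(\phi')$ with $\phi'<\phi/2$ for the Laplacian; since $\phi>0$ is arbitrary in Theorem \ref{elldomain3}, this costs nothing.
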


\Proof{This proposition was shown as \cite[Proposition 3.7]{RS} for the base space 
$X_0 = \cH^{0,\gg}_p(\B)$ and $u_0\in L^\infty(\B)$. 
In order for the statement to extend to the present situation we need to prove that 
multiplication by a function in $X_q$ defines a bounded operator on $\cH^{s,\gg}_p(\B)$.
For $q>2$, however, we know from \eqref{embedding} that 
$X_q\hookrightarrow \cD(\underline \gD_{0,s}) =\cH^{s+2,\gg+2}_p(\B)\oplus \C$.
Clearly, multiplication by constants furnishes a bounded operator. 
By assumption $s+2> s+ (n+1)/p$; by \eqref{fixgamma} we have $\gg+2\ge (n+1)/2$.  
Hence Corollary \ref{c2} shows that also the functions in $\cH^{s+2,\gg+2}_p(\B)$ 
define continuous multipliers. This completes the argument.}

\subsection{Short time solutions of the Cahn-Hilliard equation}

\begin{theorem}\label{hireg}
Let $p\geq n+1$, $q>2$.  
Given any $u_0\in X_q$, there exists a $T>0$ and a 
unique solution in
$$u\in L^q(0,T; \cD(\underline\gD_0^2))\cap W^{1}_q(0,T; \cH^{s,\gg}_p(\B))
\cap C([0,T],X_q)$$
solving Equation \eqref{CH1} on $]0,T[$ with initial condition \eqref{CH2}. 
\end{theorem}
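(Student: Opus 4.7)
The plan is to apply the Cl\'ement--Li theorem (Theorem \ref{CL}) with $X_0=\cH^{s,\gg}_p(\B)$, $X_1=\cD(\underline\gD_0^2)$, $g\equiv0$, and with $A(\cdot)$ and $F(\cdot)$ as in \eqref{defA}, \eqref{defF}. The required maximal $L^q$-regularity for $A(u_0)$ is the new ingredient on higher-order spaces and is obtained as follows: by Proposition \ref{p1} (which rests on Theorem \ref{hibip}), for every $\phi>0$ there is $c_0>0$ such that $A(u_0)+c_0 I\in\cP(\theta)\cap\mathcal{BIP}(\phi)$ on the UMD space $X_0$ for some $\theta>\pi/2$. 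Choosing $\phi<\pi/2$ and invoking Dore--Venni (equivalently, the remark after Weis' theorem), $A(u_0)+c_0 I$ has maximal $L^q$-regularity; subtracting the bounded perturbation $c_0I$ preserves maximal regularity on finite intervals.

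Next I verify hypothesis (H1): $v\mapsto A(v)$ is locally Lipschitz from a neighborhood of $u_0$ in $X_q$ into $\cL(X_1,X_0)$. Only the term $-3v^2\underline\gD_0$ depends on $v$, so the task reduces to a Lipschitz estimate for the map $v\mapsto (w\mapsto v^2 w)$ from $X_q$ into $\cL(X_1,X_0)$. By \eqref{embedding} we have the continuous embedding $X_q\hookrightarrow\cD(\underline\gD_{0,s})=\cH^{s+2,\gg+2}_p(\B)\oplus\C$. The hypotheses $p\ge n+1$ and $\gg+2\ge(n+1)/2$ (the latter from \eqref{fixgamma}) yield $s+2>s+(n+1)/p$, so by Corollary \ref{c2} every $v\in X_q$ acts as a bounded multiplier on $\cH^{s,\gg}_p(\B)$, and by Corollary \ref{c0} so does $v^2$, with norm controlled by $\|v\|_{X_q}^2$. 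Since $u\in X_1$ implies $\underline\gD_0 u\in\cD(\underline\gD_0)\subset X_0$, factoring $v_1^2-v_2^2=(v_1+v_2)(v_1-v_2)$ produces the estimate
\[
\|(A(v_1)-A(v_2))u\|_{X_0}=3\|(v_1^2-v_2^2)\underline\gD_0 u\|_{X_0}\le C\bigl(\|v_1\|_{X_q}+\|v_2\|_{X_q}\bigr)\|v_1-v_2\|_{X_q}\|u\|_{X_1}.
\]

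For hypothesis (H2), I show that $F(u)=-6u(\nabla u,\nabla u)_{g_0}$ defines a locally Lipschitz map from $X_q$ into $X_0$. For $u\in X_q\hookrightarrow\cH^{s+2,\gg+2}_p(\B)\oplus\C$, the vector fields $x\partial_x$ and $\partial_{y^i}$ from \eqref{defnabla} send $u$ into $\cH^{s+1,\gg+2}_p(\B)$. Using Corollary \ref{c0} to square, together with the $x^{-2}$ prefactor in $(\nabla u,\nabla u)_{g_0}$ (which shifts the weight by $-2$), one lands in $\cH^{s+1,\gg'}_p(\B)$ with $\gg'\ge\gg$ precisely because $\gg>(n-3)/2$ by \eqref{fixgamma}; a final multiplication by $u$ via Corollary \ref{c0} keeps the result in $X_0$. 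Splitting differences as $u_1 q_1-u_2 q_2=(u_1-u_2)q_1+u_2(q_1-q_2)$ with $q_i=(\nabla u_i,\nabla u_i)_{g_0}$ and using the same multilinear estimates turns this into the Lipschitz bound demanded by (H2); (H3) is trivial since $g\equiv 0$.

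Applying Theorem \ref{CL} then yields $T>0$ and a unique solution in $L^q(0,T;X_1)\cap W^1_q(0,T;X_0)\cap C([0,T];X_q)$. The main obstacle is conceptual rather than technical: showing that maximal regularity, which in \cite{RS} was only established on $\cH^{0,\gg}_p(\B)$, persists on $\cH^{s,\gg}_p(\B)$ for $s>0$. This is exactly the content imported from Theorem \ref{hibip} via Proposition \ref{p1}; once this is available, the nonlinear estimates are a routine combination of the embedding \eqref{embedding} with the multiplier results in Corollaries \ref{c0} and \ref{c2}, made possible by the choices $p\ge n+1$ and $\gg$ from \eqref{fixgamma}.
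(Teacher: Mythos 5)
Your overall route is the paper's: Cl\'ement--Li with $X_0=\cH^{s,\gg}_p(\B)$, $X_1=\cD(\underline\gD_0^2)$, maximal regularity of $A(u_0)$ from Proposition \ref{p1} via $\mathcal{BIP}$ on the UMD space $X_0$, and multiplier estimates from Corollaries \ref{c0} and \ref{c2} for (H1) and (H2). The treatment of (H1) and of maximal regularity is fine.

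There is, however, a genuine gap in your verification of (H2). You only use the embedding $X_q\hookrightarrow\cD(\underline\gD_{0,s})=\cH^{s+2,\gg+2}_p(\B)\oplus\C$, so that $x\partial_x u$ and $\partial_{y^i}u$ land in $\cH^{s+1,\gg+2}_p(\B)$ --- exactly one derivative above $s$. This is not enough for two reasons. First, your final multiplication ``via Corollary \ref{c0}'' to land in $X_0=\cH^{s,\gg}_p(\B)$ would require $s>(n+1)/p$, which is not assumed (the theorem allows $s=0$); the correct tool is Corollary \ref{c2}, i.e.\ multiplication by a factor of \emph{strictly higher} smoothness $s+(n+1)/p+\gve$. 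Second, with the factors only in $\cH^{s+1,\cdot}_p$, Corollary \ref{c2} needs $s+1\ge s+(n+1)/p+\gve$ for some $\gve>0$, i.e.\ $p>n+1$ strictly; the borderline case $p=n+1$ admitted by the hypothesis is lost. The paper closes both holes by exploiting the interpolation inequalities \eqref{e32} and \eqref{e62}: since $q>2$ one has $X_q\hookrightarrow\cD((c-\underline\gD_{0,s})^{2\eta})$ with $2\eta>1$, so the first-order operators actually map $X_q$ into $\cH^{s+1+\gve,\gg+2}_p(\B)$ for some $\gve>0$, and after rescaling by $\tilde x^{\gt}$ with $\gt=(n+1)/2-\gg-1$ (your weight bookkeeping via $\gg>(n-3)/2$ is the same idea, just stated loosely) the gradient factors lie in the algebra $\cH^{s+1+\gve,(n+1)/2}_p(\B)$ and act as bounded multipliers on $\cH^{s,\gg}_p(\B)$ by Corollary \ref{c2}. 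With that strengthened embedding your estimate for $F(u_1)-F(u_2)$ goes through verbatim.
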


\Proof{
We write the Cahn-Hilliard equation in the form $\partial_tu+A(u)u=F(u)$ with $A$ 
and $F$ defined in \eqref{defA}  and \eqref{defF}, respectively, and apply Theorem \ref{CL}. 
We have seen in Proposition \ref{p1} that $A(u_0)+cI$ has $\mathcal{BIP}(\phi)$ for arbitrary 
$\phi>0$ and hence maximal regularity, provided $c\ge0$ is large. 
It remains to check conditions (H1) and (H2); note that (H3) is not required.

Let $U $ be a bounded neighborhood of $u_0$ in  $X_q$ and 
$\tilde x$ a function  which equals $x$ near $\partial\B$, is strictly positive on 
$\B^\circ$ and is $\equiv 1$ outside a neighborhood of $\partial \B$.

Concerning (H1): Let $u_1, u_2\in U$. We have seen in the proof of Proposition \ref{p1}
that multiplication by an element in $\cD(\underline\gD_{0,s})$ defines a bounded operator
on $\cH^{s,\gg}_p(\B)$. Moreover, in view of our assumptions on $p$ and $\gg$, we
see from Corollary \ref{c1} that $\cD(\underline\gD_{0,s})$ is an algebra. Therefore
\begin{eqnarray*}
\lefteqn{\|A(u_{1})-A(u_{2})\|_{\cL(X_{1},X_{0})}
=3\|(u_{1}^{2}-u_{2}^{2})\underline{\Delta}_0\|_{\cL(X_{1},X_{0})}
}
\\
&\le& 
c\|(u_1+u_2)(u_1-u_2)I\|_{\mathcal{L}(\cD(\underline{\Delta}_{0,s}),X_0)}
\le c_1 \|u_{1}+u_{2}\|_{\cD(\underline\gD_{0,s})} \|u_{1}-u_{2}\|_{\cD(\underline\gD_{0,s})}\\
&\leq&
 c_{2}(\|u_1\|_{X_q}+\|u_2\|_{X_q})\|u_1-u_2\|_{X_q}
\le c_3\|u_1-u_2\|_{X_q} 
\end{eqnarray*}
for suitable constants $c, c_{1}, c_{2},$ and $c_{3}$, 
where the last inequality is a consequence of the boundedness of $U$. 

Concerning (H2):  
In view of \eqref{e32} and \eqref{e62} the operators $x\partial_x$ and $\partial_{y^j}$, defined near the 
boundary,  map $X_q$ to $\cH^{s+1+\gve, \gg+2}_p(\B)$ 
for some $\gve>0$. Hence we conclude from \eqref{defnabla} that 
$\nabla u\in \cH^{s+1+\gve, \gg+1}_p(\B)$ for $u\in X_q$.
We let $\theta = (n+1)/2-\gg-1$. 
Then  $\tilde x^\gt \nabla u \in \cH^{s+1+\gve, (n+1)/2}_p(\B)$
and $\tilde x^{-2\gt}u \in \cH^{s, \gg}_p(\B)$ for $u\in X_q$ since $\gg>(n-3)/2$. 

In the following estimate we use the facts that, by Corollary \ref{c0},  
$\cH^{s+1+\gve, (n+1)/2}_p(\B)$ is an algebra and, by Corollary \ref{c2},
multiplication by an element in $\cH^{s+1+\gve, (n+1)/2}_p(\B)$ defines a bounded operator
on $\cH^{s,\gg}_p(\B)$. 
\begin{eqnarray}
\lefteqn{\|F(u_{1})-F(u_{2})\|_{X_{0}} 
= \|6u_1(\nabla u_{1},\nabla u_{1})_{g_0}
   -6u_{2}(\nabla u_{2},\nabla u_{2})_{g_0}\|_{X_{0}}}\label{F}\\
 &\le&6\|u_{1}(\nabla u_{1},\nabla u_{1})_{g_0}
 -u_{2}(\nabla u_{1},\nabla u_{1})_{g_0}\|_{X_0}
 \nonumber
 +6\|u_{2}(\nabla u_{1},\nabla u_{1})_{g_0} 
 -u_{2}(\nabla u_{1},\nabla u_{2})_{g_0}\|_{X_0} 
\nonumber \\
&&   +6\|u_{2}(\nabla u_{1},\nabla u_{2})_{g_0}-u_{2}(\nabla u_{2},\nabla u_{2})_{g_0}\|_{X_{0}}
\nonumber\\
&\le&6\|(u_1-u_2)\tilde x^{-2\gt}\|_{X_0}\ \|\tilde x^\gt\nabla u_1\|_{\cH^{s+1+\gve,(n+1)/2}_p}
\ \|\tilde x^\gt\nabla u_1\|_{\cH^{s+1+\gve,(n+1)/2}_p}
\nonumber\\
&&+6\|u_2\tilde x^{-2\gt}\|_{X_0}\ \|\tilde x^\gt\nabla u_1\|_{\cH^{s+1+\gve,(n+1)/2}_p}
\ \|\tilde x^\gt\nabla (u_1-u_2)\|_{\cH^{s+1+\gve,(n+1)/2}_p}
\nonumber\\
&&+6\|u_2\tilde x^{-2\gt}\|_{X_0}\ \|\tilde x^\gt\nabla u_2\|_{\cH^{s+1+\gve,(n+1)/2}_p}
\ \|\tilde x^\gt\nabla (u_1-u_2)\|_{\cH^{s+1+\gve,(n+1)/2}_p}
\nonumber\\
&\le & c_4(\|u_1\|^2_{X_q}+ \|u_1\|_{X_q}\|u_2\|_{X_q} + \|u_2\|^2_{X_q})\|u_1-u_2\|_{X_q}\nonumber
\end{eqnarray}
for $u_1,u_2$ in $X_q$, with a suitable constant $c_4$. As $U$ is bounded, the Lipschitz continuity of $F$ follows.
 }

\section{Higher Regularity on Warped Cones}
\setcounter{equation}{0}

Let now $\Delta$ be as in \eqref{Delta}, the Laplacian on $\B$ induced by the warped cone metric \eqref{e2}. In this case,  the results in \cite{Sh} are no longer applicable. 
We will instead infer maximal regularity of the operator $A(u_0)$ in Equation \eqref{defA} from the results for straight cone Laplacians and perturbation theory for $R$-sectorial operators. 

In addition to $g$ we therefore choose a metric $g_0$ which coincides with $g$ outside 
the collar neighborhood $[0,1]\times \partial \B$ and is of the form 
$$g_0 = dx^2 + x^2 h(0)$$
on $[0,1/2]\times \partial\B$. 
As above we fix $\gg$ according to \eqref{fixgamma} and denote by $\gD_0$ the Laplace-Beltrami operator with respect to  $g_0$ and by
$\underline \gD_0$ the extension in $\cH^{s,\gg}_p(\B)$ with domain 
$$\cD(\underline \gD_{0})= \cH^{s+2,\gg+2}_p(\B)\oplus \C.$$

\subsection{The choice of the extension of $\gD$}\label{choice}

It is clear that the constant functions also belong to the maximal domain of $\gD$ on 
$\cH^{s,\gg}_p(\B)$. We can therefore study the extension $\underline \gD$ of $\gD$ 
with the domain 
\begin{gather}\label{d2} 
\cD(\underline\gD)=\cH^{s+2,\gg+2}_p(\B)\oplus \C.
\end{gather}

Next fix a cut-off function $\go_1$. 
For $0<\varepsilon\le 1$ let  $\omega_{\varepsilon}(t) =\go_1(t/\gve)$.
This is  a cutoff function with support in $[0,\varepsilon)\times \partial\B$.
We define
\begin{gather*}
\Delta_{\varepsilon}=\omega_{\varepsilon}\Delta_{0}+(1-\omega_{\varepsilon})\Delta
\end{gather*}
and consider the closed extension $\underline{\Delta}_{\varepsilon}$ of $\Delta_{\varepsilon}$ with domain given by \eqref{d2}.

Clearly, each of the operators $\underline{\Delta}_{0}$, $\underline{\Delta}$ and $\underline{\Delta}_{\varepsilon}$ induces (by pointwise action) a closed operator in $L^{q}(0,T;\mathcal{H}^{s,\gamma}_{p}(\B))$, $1<q<\infty$, with domain $L^{q}(0,T;\mathcal{H}_{p}^{s+2,\gamma+2}(\B)\oplus\C)$. 

\subsection{R-boundedness and maximal $L^{p}$-regularity}

It follows from \eqref{Delta} that 
\begin{gather}
\Delta=\Delta_{0}+\frac{\partial_{x}|h|}{2x|h|}(x\partial_{x})+\frac{1}{x^{2}}\big(\Delta_{h(x)}-\Delta_{h(0)}\big),
\end{gather}
where $|h|=\mathrm{det}(h_{ij}(x))$. Hence
\begin{gather*}
\Delta-\Delta_{\varepsilon}=B_{\varepsilon},
\end{gather*}
where
\begin{gather*}
B_{\varepsilon}=\omega_{\varepsilon}(x)\ \Big(\frac{\partial_{x}|h|}{2x|h|}(x\partial_{x})+\frac{1}{x^{2}}\big(\Delta_{h(x)}-\Delta_{h(0)}\big)\Big)
\in\mathcal{L}(\mathcal{D}(\underline{\Delta}),\mathcal{H}_{p}^{s,\gamma}(\mathbb{B})).
\end{gather*}

\begin{lemma}\label{bgve}
$B_\gve\to 0$ in $\cL(\cD(\underline \gD), \cH^{s,\gg}_p(\B))$ as $\gve\to 0^+$. 
\end{lemma}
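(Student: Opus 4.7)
My strategy is to factor $B_\varepsilon = \omega_\varepsilon\,\tilde B$, where the ``core operator''
$$\tilde B=\frac{\partial_{x}|h|}{2x|h|}(x\partial_{x})+\frac{1}{x^{2}}\bigl(\Delta_{h(x)}-\Delta_{h(0)}\bigr)$$
maps $\mathcal{D}(\underline{\Delta})$ boundedly \emph{into the smaller space} $\mathcal{H}^{s,\gamma+1}_p(\mathbb{B})$, and then use the localization $\omega_\varepsilon$ to pay one power of $x$ in exchange for the weight loss from $\gamma+1$ back down to $\gamma$, picking up a factor of order $\varepsilon$ in the process. The smoothness of the metric $h(x)$ up to $x=0$ is what makes the gain of one weight unit available.

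\textbf{Step 1 (boundedness of $\tilde B$ with a weight gain).} For the first summand, $q(x):=\partial_x|h|/(2|h|)$ is smooth up to $x=0$ and hence defines a multiplier on Mellin--Sobolev spaces by Corollary \ref{c2}; since $(x\partial_x):\mathcal{H}^{s+2,\gamma+2}_p\to\mathcal{H}^{s+1,\gamma+2}_p$ and division by $x$ shifts the weight by $-1$, we obtain $q(x)\partial_x:\mathcal{H}^{s+2,\gamma+2}_p\to \mathcal{H}^{s+1,\gamma+1}_p\hookrightarrow\mathcal{H}^{s,\gamma+1}_p$. For the second summand, smoothness of $x\mapsto h(x)$ at $x=0$ gives $h(x)-h(0)=x\,\tilde h(x)$ with $\tilde h$ smooth up to $x=0$; hence $\Delta_{h(x)}-\Delta_{h(0)}=x\,D(x)$ for a smooth family $D(x)$ of second-order differential operators in $y$ only. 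Consequently $x^{-2}(\Delta_{h(x)}-\Delta_{h(0)})=x^{-1}D(x)$ sends $\mathcal{H}^{s+2,\gamma+2}_p$ to $\mathcal{H}^{s,\gamma+2}_p$ and then, after multiplication by $x^{-1}$, into $\mathcal{H}^{s,\gamma+1}_p$. Both pieces annihilate the constants of $\mathcal{D}(\underline{\Delta})$, so $\tilde B:\mathcal{D}(\underline{\Delta})\to \mathcal{H}^{s,\gamma+1}_p(\mathbb{B})$ is bounded.

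\textbf{Step 2 (localization produces the factor $\varepsilon$).} I will show that, for any $v\in\mathcal{H}^{s,\gamma+1}_p(\mathbb{B})$,
$$\|\omega_\varepsilon v\|_{\mathcal{H}^{s,\gamma}_p(\mathbb{B})}\le C\varepsilon\,\|v\|_{\mathcal{H}^{s,\gamma+1}_p(\mathbb{B})}.$$
Since $\omega_\varepsilon$ is supported in the collar and depends only on $x$, it suffices to compare the two norms through the transform $\mathcal{S}_\gamma$. The identity $\mathcal{S}_\gamma[u]=e^{-t}\mathcal{S}_{\gamma+1}[u]$ gives
$$\mathcal{S}_\gamma[\omega_\varepsilon v](t,y) = m_\varepsilon(t)\,\mathcal{S}_{\gamma+1}[v](t,y),\qquad m_\varepsilon(t):=\omega_1(e^{-t}/\varepsilon)\,e^{-t}.$$
The substitution $t\mapsto t-\log(1/\varepsilon)$ yields $m_\varepsilon(t)=\varepsilon\,m_1(t-\log(1/\varepsilon))$, i.e.\ $m_\varepsilon$ is $\varepsilon$ times a translate of the fixed Schwartz function $m_1(t)=\omega_1(e^{-t})e^{-t}$. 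Translation invariance of Bessel-potential norms then gives $\|m_\varepsilon\|_{H^r_p(\mathbb{R})}=\varepsilon\|m_1\|_{H^r_p(\mathbb{R})}$ for every $r$, and the standard multiplier estimate (Corollary \ref{c2}, applied with $m_\varepsilon$ viewed as a $y$-independent symbol with the required Mellin--Sobolev smoothness) produces the claimed bound with $C=C\|m_1\|_{H^{s+(n+1)/p+\varepsilon'}_p}$.

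\textbf{Conclusion.} Combining Steps 1 and 2,
$$\|B_\varepsilon u\|_{\mathcal{H}^{s,\gamma}_p(\mathbb{B})}=\|\omega_\varepsilon \tilde B u\|_{\mathcal{H}^{s,\gamma}_p(\mathbb{B})}\le C\varepsilon\,\|\tilde Bu\|_{\mathcal{H}^{s,\gamma+1}_p(\mathbb{B})}\le C'\varepsilon\,\|u\|_{\mathcal{D}(\underline{\Delta})}$$
for all $u\in\mathcal{D}(\underline{\Delta})$, and the lemma follows by letting $\varepsilon\to 0^+$. The main subtlety is Step 2: one must resist the temptation to treat $\omega_\varepsilon$ as a multiplier on a single Mellin--Sobolev space (where its norm does not shrink), and instead exploit the fact that it goes between \emph{different} weights, so that the estimate inherits the small factor $\varepsilon$ from the sup-norm of $x\omega_\varepsilon(x)$.
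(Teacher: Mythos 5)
Your proof is correct and takes essentially the same route as the paper's: both arguments extract the factor $x$ hidden in $B_\gve$ (you by letting $\tilde B$ gain one unit of weight, the paper by factoring $B_\gve = x\go_\gve(x)\,C$ with $C$ a second order cone differential operator) and then convert that factor into an $O(\gve)$ bound on the support of $\go_\gve$; your translation-invariance computation for $m_\gve(t)=\gve\, m_1(t-\log(1/\gve))$ is exactly the paper's observation that $x\go_\gve(x)=\gve\,(x/\gve)\go(x/\gve)$ with $(x/\gve)\go(x/\gve)$ uniformly bounded as a multiplier. The only small inaccuracy is the appeal to Corollary \ref{c2} for multiplication by the smooth, generally non-vanishing coefficient $\partial_x|h|/(2|h|)$, which need not belong to $\cH^{s+(n+1)/p+\gve,(n+1)/2}_p(\B)$; however, multiplication by functions smooth up to $x=0$ is a standard bounded operation on Mellin--Sobolev spaces (it is implicitly used for the coefficients of $C$ in the paper as well), so nothing essential is lost.
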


\Proof{The smoothness of $h$ implies that $B_\gve = x\go_\gve(x)C$, where $C$ is a second 
order cone differential operator. It maps the constants to zero; hence it suffices to show 
that $B_\gve$ tends to zero in $\cL(\cH^{s+2,\gg+2}_p(\B), \cH^{s,\gg}_p(\B))$ or, even simpler,  that the norm of multiplication by $x\go_\gve(x)$ tends to zero in $\cL(\cH^{s,\gg}_p(\B))$. To see the latter, we write 
$$x\go_\gve(x)= \gve \ \frac x\gve \,\go\Big(\frac x\gve\Big),$$
note that $(x/\gve)\go(x/\gve)$ is uniformly bounded with respect to $\gve$ in $\cH^{k,(n+1)/2}_p(\B)$ for each $k\ge0$, and infer from Corollary \ref{c2}  that its norm as a multiplier is uniformly bounded. The factor $\gve$ then yields the assertion.  }

The extensions $\underline \gD,\underline \gD_0$ and $\underline\gD_\gve$ 
all have the same model cone operator, namely
$$\widehat \gD = \frac1{x^2}\left((x\partial_x)^2 -(n-1)(-x\partial_x) + \gD_{h(0)}\right).
$$
We consider the extension $\underline{\widehat \gD}$ in $\cK^{0,\gg}_p(\R\times\partial\B)$ 
with domain 
$$\cD(\underline{\widehat\gD}) = \cK^{2,\gg+2}_p(\R\times \partial \B)\oplus \C.$$
It was shown in \cite[Theorem 5.7]{Sh} that 
$$\gl-\underline{\widehat \gD}: 
\cK^{2,\gg+2}_p(\R\times \partial \B)\oplus \C \longrightarrow 
\cK^{0,\gg}_p(\R\times\partial\B)$$
is invertible for $\gl\notin \R_-$. According to  \cite[Theorem 4.2]{Sh} the invertibility is 
independent of $1<p<\infty$; it holds in particular for $p=2$. 
Moreover, it was pointed out shortly before Theorem 4.2 in \cite{Sh} that 
the inverse is given as the sum of two principal edge symbols. 
They are parameter-dependent operators, and hence
$$\|(\gl-\underline{\widehat \gD})^{-1}\|_{\cL(\cK^{0,\gg}_2(\R\times\partial\B))}
= O(|\gl|^{-1}).$$ 
We now apply \cite[Theorem 6.36]{GKM} and conclude that for $\gl\notin \R_-$, $|\gl|$ sufficiently large
\begin{eqnarray}\label{inverse2}
\gl -\underline\gD: \cH^{s+2,\gg+2}_2(\B) \oplus \C \to \cH^{s,\gg}_2(\B)\quad \text{is invertible.}
\end{eqnarray}
(Note that the authors explain in the beginning of Section 3 how to reduce the weight $\gg$ to the fixed choice they use in their formulation of the theorem). 


\begin{theorem}\label{si}For all $1<p<\infty$, $s\ge0$ and  $\gl\notin \R_-$
\begin{eqnarray}
\gl -\underline\gD: 
\cH^{s+2,\gg+2}_p(\B)\oplus \C\to \cH^{s,\gg}_p(\B)
\end{eqnarray}
is invertible.
\end{theorem}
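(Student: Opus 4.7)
My plan is to combine the $L^2$-invertibility for large $|\gl|$ given by \eqref{inverse2} with the Fredholm theory in the cone calculus to obtain invertibility for all $(s,p)$, and with a Friedrichs-extension argument, in the spirit of Theorem~\ref{spectrum}, to rule out spectrum in $\C \setminus \R_-$.

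First I would observe that the conormal symbol of $\gl - \gD$ is independent of $\gl$ (the zero-order operator $\gl$ contributes nothing as $x \to 0$ when $\gD$ is viewed as an order-two cone operator) and coincides with $-\gs_M(\gD)$, which by the choice \eqref{fixgamma} of $\gg$ is invertible on the line $\Re z = (n+1)/2 - \gg$. Together with the $\B$-ellipticity of $\gD$, this yields a parametrix in the cone calculus (cf.\ \cite[Corollaries 3.3 and 3.5]{ScSe}), so that
\[
\gl - \underline\gD:\cH^{s+2,\gg+2}_p(\B) \oplus \C \to \cH^{s,\gg}_p(\B)
\]
is Fredholm for every $\gl \in \C$, $s \ge 0$ and $1 < p < \infty$, with kernel and cokernel independent of $s$ and $p$ by the elliptic-regularity argument already used in the proof of Theorem~\ref{spectrum}. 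Applying this at a $\gl_0$ supplied by \eqref{inverse2}, where $\gl_0 - \underline\gD$ is invertible on $L^2$, I would conclude that $\gl_0 - \underline\gD$ is invertible on every $\cH^{s,\gg}_p(\B)$ and that the Fredholm index of $\gl - \underline\gD$ vanishes for all $\gl$ and all $(s,p)$. Hence invertibility reduces to showing that the kernel is trivial, and by the $(s,p)$-independence it suffices to establish this in $\cH^{0,\gg}_2(\B)$.

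Next I would adapt the Friedrichs-extension argument of Theorem~\ref{spectrum} to the warped setting. The Laplacian $\gD$ is symmetric and non-positive on $C_c^\infty(\B^\circ)$ with respect to the inner product of $\cH^{0,0}_2(\B) = L^2(\B, d\mu_g)$, so it admits a Friedrichs extension $\underline\gD_F$ with $\gs(\underline\gD_F) \subseteq \R_{\le 0}$. The near-tip description of $\cD(\underline\gD_F)$ is governed by the conormal symbol of $\gD$ at $x = 0$, which coincides with that of $\gD_0$; accordingly the inclusion $\cD(\gD_{\max}) \cap \cH^{2,1}_2(\B) \subseteq \cD(\underline\gD_F)$ from \cite[Corollary~5.4]{Sh} carries over. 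Now suppose $\gl \notin \R_-$ and $u = u_0 + \mu \in \cH^{2,\gg+2}_2(\B) \oplus \C$ with $(\gl - \gD)u = 0$. Then $\gD u_0 = \gl u_0 + \gl\mu \in \cH^{0,(n+1)/2 - \gd}_2(\B)$ for every $\gd > 0$, which together with $\gg + 2 > (n+1)/2 \ge 1$ forces $u_0 \in \cD(\gD_{\max}) \cap \cH^{2,1}_2(\B) \subseteq \cD(\underline\gD_F)$. Hence $u \in \cD(\underline\gD_F)$, and since $\gl \notin \R_{\le 0}$, this forces $u = 0$.

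Combining the two steps yields triviality of $\ker(\gl - \underline\gD)$ on $\cH^{0,\gg}_2(\B)$, hence on every $\cH^{s,\gg}_p(\B)$, which together with index zero gives the asserted invertibility. The main obstacle lies in the last step: transferring the Friedrichs-domain description \cite[Corollary~5.4]{Sh} from straight to warped cones. This is plausible because the Friedrichs extension is determined by the behavior at $x = 0$, and $g - g_0 = x^2(h(x) - h(0))$ vanishes to sufficient order there, but making the transfer rigorous requires care, and is likely where the precise choice of the weight $\gg$ in \eqref{fixgamma} plays its role.
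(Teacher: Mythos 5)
Your proposal follows essentially the same route as the paper: reduce to $s=0$, $p=2$ via the Fredholm theory of the cone calculus, use the large-$|\gl|$ invertibility \eqref{inverse2} to get index zero (equivalently, compact resolvent, so that every spectral point is an eigenvalue), and then exclude eigenvalues off $\R_{\le 0}$ by the Friedrichs-extension argument of Theorem \ref{spectrum}. The one genuine gap is precisely the point you flag yourself: the inclusion $\cD(\gD_{\max})\cap\cH^{2,1}_2(\B)\subseteq\cD(\gD_F)$ for the \emph{warped} Laplacian does not simply ``carry over'' from \cite[Corollary 5.4]{Sh}, which is the straight-cone statement used in Theorem \ref{spectrum}; your heuristic that $g-g_0=x^2(h(x)-h(0))$ vanishes to sufficient order at $x=0$ is a plausibility argument, not a proof, and the weight condition \eqref{fixgamma} alone does not supply the missing step. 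The paper closes this gap not by perturbation but by citing the general results of Gil and Mendoza: \cite[Theorem 8.12]{GM} determines the Friedrichs domain for arbitrary (in particular warped) cone Laplacians, and \cite[Lemma 8.1]{GM} gives directly that $\cD(\gD_{\max})\cap\cH^{2,1}_2(\B)\subseteq\cD(\gD_F)$; one then only has to note separately that the constants belong to $\cD(\gD_F)$. With that citation substituted for your transfer argument, your proof is complete and coincides with the paper's.
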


\Proof{We argue similarly as in the proof of Theorem \ref{spectrum}. 
On the domain where \eqref{inverse2} holds, $\gl-\underline\gD$, has compact resolvent.   
Hence every point in the spectrum necessarily is an eigenvalue.
In order to see that $\gl-\underline\gD$ is actually invertible for all $\gl\notin\R_{\le0}$, 
we first note that we may assume $s=0$ and $p=2$. 
We moreover observe that $\gD$ is symmetric and bounded from above by zero on $C^\infty_c(\B^\circ)$ with respect to the $\cH^{0,0}_2(\B)$-scalar product given by \eqref{measure}. It therefore has a Friedrichs extension $\gD_F$ with spectrum in $\R_{\le0}$. 

The domain of $\gD_F$ has been determined in \cite[Theorem 8.12]{GM}. 
For our  purposes it suffices to know that $\cD(\gD_F)$ contains the constants 
and, by \cite[Lemma 8.1]{GM}, the set $\cD(\gD_{\max})\cap \cH^{2,1}_2(\B)$, where $\gD_{\max}$ is the maximal extension of $\gD$ on $\cH^{0,0}_2(\B)$. 
With this information, the proof proceeds as in Theorem \ref{spectrum}.}
 
\begin{lemma}\label{invert}
Given $c>0$ 
there exist $\gd>0$ and $\gve_0>0$ such that 
\begin{eqnarray}
\label{lower}
\|(c-\underline{\Delta}_{\varepsilon})u\|_{\mathcal{H}_{p}^{s,\gamma}(\mathbb{B})}\geq \delta\|u\|_{\mathcal{D}(\underline{\Delta})},
\quad 0<\varepsilon<\gve_0, u\in\mathcal{D}(\underline{\Delta}).
\end{eqnarray}
\end{lemma}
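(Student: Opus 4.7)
The plan is to view $c-\underline{\Delta}_{\varepsilon}$ as a small perturbation of $c-\underline{\Delta}$ and combine the invertibility statement of Theorem \ref{si} with the convergence $B_{\varepsilon}\to 0$ of Lemma \ref{bgve}. No further technology is needed beyond these two results.

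First I would apply Theorem \ref{si}: since $c>0$, in particular $c\notin\mathbb{R}_{-}$, and therefore
$$c-\underline{\Delta}:\mathcal{D}(\underline{\Delta})=\mathcal{H}_{p}^{s+2,\gamma+2}(\mathbb{B})\oplus\mathbb{C}\longrightarrow \mathcal{H}_{p}^{s,\gamma}(\mathbb{B})$$
is a bounded linear bijection. The open mapping theorem then yields a constant $C=C(c)>0$ such that
$$\|(c-\underline{\Delta})u\|_{\mathcal{H}_{p}^{s,\gamma}(\mathbb{B})}\ \geq\ C\,\|u\|_{\mathcal{D}(\underline{\Delta})},\qquad u\in\mathcal{D}(\underline{\Delta}).$$

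Second, recall from Section \ref{choice} that $\Delta-\Delta_{\varepsilon}=B_{\varepsilon}$, so on the common domain $\mathcal{D}(\underline{\Delta})=\mathcal{D}(\underline{\Delta}_{\varepsilon})$ one has $c-\underline{\Delta}_{\varepsilon}=(c-\underline{\Delta})+B_{\varepsilon}$. By Lemma \ref{bgve} we may choose $\varepsilon_{0}>0$ so that
$$\|B_{\varepsilon}\|_{\mathcal{L}(\mathcal{D}(\underline{\Delta}),\mathcal{H}_{p}^{s,\gamma}(\mathbb{B}))}\ \leq\ \tfrac{C}{2},\qquad 0<\varepsilon<\varepsilon_{0}.$$
The reverse triangle inequality then gives
$$\|(c-\underline{\Delta}_{\varepsilon})u\|_{\mathcal{H}_{p}^{s,\gamma}(\mathbb{B})}\ \geq\ \|(c-\underline{\Delta})u\|_{\mathcal{H}_{p}^{s,\gamma}(\mathbb{B})}-\|B_{\varepsilon}u\|_{\mathcal{H}_{p}^{s,\gamma}(\mathbb{B})}\ \geq\ \tfrac{C}{2}\,\|u\|_{\mathcal{D}(\underline{\Delta})},$$
for all $u\in\mathcal{D}(\underline{\Delta})$ and $0<\varepsilon<\varepsilon_{0}$. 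Setting $\delta=C/2$ establishes the claim.

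There is really no serious obstacle in this argument, since both required ingredients have already been put in place: Theorem \ref{si} supplies the quantitative lower bound for the unperturbed operator via the open mapping theorem, and Lemma \ref{bgve} supplies the smallness of $B_{\varepsilon}$ in precisely the operator norm we need. The only subtlety worth noting is the implicit identification of the graph norm of $\underline{\Delta}$ with the direct sum norm on $\mathcal{H}_{p}^{s+2,\gamma+2}(\mathbb{B})\oplus\mathbb{C}$, which follows at once from the fact that $c-\underline{\Delta}$ is an isomorphism between these spaces.
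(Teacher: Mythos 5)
Your argument is correct and is essentially the paper's own proof: the invertibility of $c-\underline{\Delta}$ from Theorem \ref{si} yields the lower bound $\|(c-\underline{\Delta})u\|\geq 2\delta\|u\|_{\mathcal{D}(\underline{\Delta})}$, and Lemma \ref{bgve} then absorbs the perturbation $B_{\varepsilon}$ for small $\varepsilon$. You have merely spelled out the triangle-inequality step that the paper leaves implicit.
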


\Proof{The invertibility of  $c-\underline \gD$ implies that there is a constant 
$\gd>0$ such that
\begin{gather*}
\|(c-\underline{\Delta})u\|_{\cH_{p}^{s,\gamma}(\mathbb{B})}\geq 2\delta\|u\|_{\cD(\underline{\Delta})}.
\end{gather*}
Now the assertion follows from Lemma \ref{bgve}.
}

\begin{proposition}\label{bipbound}Let $\phi>0$ and $c>0$. 
By possibly increasing $c$ we can achieve that the $\mathcal{BIP}(\phi)$-bounds of $c-\underline\gD_\gve$ can be estimated uniformly with respect to $\gve$. 
\end{proposition}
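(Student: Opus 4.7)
\Proof{
\emph{Plan.} The plan is to show that, for sufficiently large $c$, the resolvent of $c-\underline{\gD}_\gve$ exists on a sector $\gL_\gt$ of any prescribed angle $\gt<\pi$ and admits a representation of the form \eqref{res} whose constituents -- the Mellin symbol $\tilde g$, the pseudodifferential parametrix $P(\gl)$, and the Green-type remainders $G(\gl), G_\infty(\gl)$ -- are bounded uniformly in $\gve\in(0,\gve_0]$. Once that is in place, the proof of Theorem \ref{hibip} goes through verbatim when applied to the Dunford integral
\[
(c-\underline{\gD}_\gve)^{it}=\frac{i}{2\pi}\int_\cC\gl^{it}(\gl+c-\underline{\gD}_\gve)^{-1}\,d\gl,
\]
because each of the four sub-estimates (i)--(iv) there only uses the bound on a single constituent of \eqref{res}. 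This yields $\|(c-\underline{\gD}_\gve)^{it}\|_{\cL(\cH^{s,\gg}_p(\B))}\le Me^{\phi|t|}$ with $M$ independent of $\gve$, and by the usual shift of the integration contour any $\phi>0$ is achievable upon further enlarging $c$.

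The key observation enabling the construction is that $\gD_\gve$ coincides with $\gD_0$ on a full neighborhood of the cone tip, namely on the set $\{\omega_\gve=1\}$. In particular, the conormal symbol $\gs_M(c-\gD_\gve)$ equals $\gs_M(c-\gD_0)$ and is invertible on the line $\Re z=(n+1)/2-\gg-2$ by the choice \eqref{fixgamma} of $\gg$. I would therefore run the cone-calculus parametrix construction of \cite[Theorem 5.7]{Sh} -- already invoked in the proof of Theorem \ref{elldomain3} -- directly for the operator $c-\gD_\gve$ in place of $c-\gD_0$. The Mellin piece $\tilde g$ is then the inverse of the common conormal symbol and is literally $\gve$-independent; the bulk parametrix $P(\gl)$ is associated with the principal symbol of $c-\gD_\gve$, which is the convex combination $\omega_\gve\,\gs_\psi(c-\gD_0)+(1-\omega_\gve)\gs_\psi(c-\gD)$ of two fixed parameter-elliptic symbols and is hence uniformly parameter-elliptic; the Green remainders $G(\gl)$ and $G_\infty(\gl)$ come from the standard iterative corrections to $\tilde g$ and $P(\gl)$, for $c$ chosen large enough that the correction series converges uniformly.

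The main obstacle I expect is controlling the $\gve$-dependence of the full symbol of $\gD_\gve$: the derivatives of the cutoff $\omega_\gve$ blow up like $\gve^{-k}$, which at first sight spoils the symbol estimates as $\gve\to 0$. In the spirit of Lemma \ref{bgve} and Corollary \ref{c2}, however, the appropriate norms are the weighted Mellin-Sobolev multiplier norms of weight $(n+1)/2$, and by the scale invariance $\omega_\gve(x)=\omega_1(x/\gve)$ the Mellin-type derivatives $(x\partial_x)^j\omega_\gve$ remain uniformly bounded in these classes -- which is precisely what the cone calculus requires. Propagating this uniformity through the iterative parametrix corrections delivers uniform bounds for all four constituents of \eqref{res} and closes the argument. (An alternative path would be to write $c-\underline\gD_\gve=(c-\underline\gD)-B_\gve$ and to perturb a $\mathcal{BIP}$-property of $c-\underline\gD$ by the small operator $B_\gve$ of Lemma \ref{bgve}; but $\mathcal{BIP}$ for the warped Laplacian is not \emph{a priori} available, so the resolvent-structure approach above is cleaner.)
}
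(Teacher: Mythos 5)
Your proposal follows essentially the same route as the paper: both establish that the resolvent of $c-\underline\gD_\gve$ admits a representation of the form \eqref{res} with all constituents bounded uniformly in $\gve$ (exploiting exactly the two points you identify, namely that the conormal symbol is $\gve$-independent and that the $x\partial_x$-derivatives of $\go_\gve$ are uniformly bounded by scale invariance), and then feed this uniform structure into the argument of \cite{CSS2} as in Theorem \ref{hibip}. The one place where the paper is substantially more careful than your sketch is the remainder: after composing with the parametrix one must invert $I+G_{\gve,\infty}(\gl)$ and verify that the correction $H_\gve(\gl)=(I+G_{\gve,\infty}(\gl))^{-1}-I$ is again a smoothing Green operator with kernel estimates uniform in $\gve$ (the paper checks this via the mapping properties of $\gl^k\partial_\gl^m H_\gve(\gl)$ and a kernel characterization from \cite{sek}), not merely that a Neumann series converges in operator norm; your phrase ``the correction series converges uniformly'' should be expanded to cover this class membership, since otherwise the exact resolvent is not known to retain the form \eqref{res}.
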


\Proof{We will show that, for sufficiently large $\gl\in\gL_\gt$, 
we can choose all components in the representation \eqref{res} of the resolvent of 
$c-\underline\gD_\gve$  to be uniformly bounded
with respect to $\gve$ in the respective seminorms.

Let $\gve_1$ be fixed and $\gve<\gve_1$. 
As already observed in the proof of Lemma \ref{bgve}, the difference  $\gD_{\gve_1}-\gD_{\gve}= B_\gve-B_{\gve_1}$ is of the form  $x(\go_\gve(x)-\go_{\gve_1}(x)) C$ with a second order cone differential operator $C$, independent of $\gve$. 
The support of the difference therefore lies in $\gve<x<\gve_1$. 

Next we note that  the conormal symbol of $\gD_\gve$ is independent of $\gve$.
Moreover, the symbol seminorms of the $x$-coefficient $(\go_\gve(x)-\go_{\gve_1}(x)) C$ are uniformly bounded. 
(For this we measure  $x\partial_x$-derivatives instead of the usual seminorms 
in $C^\infty(\R_{\ge0}, M_{\cO}^{2,2}(\partial\B,\gL_\gt))$, which is sufficient for our purposes, cf.\ \cite[Proposition 3]{CSS2}.)
Hence the symbol inversion process produces a Mellin symbol $g_\varepsilon$ for the parametrix to 
$c-\gD_\gve-\gl$ with uniformly bounded seminorms. 

This has an important consequence: As $\Delta_{\varepsilon_1}$ and 
$\Delta_{\varepsilon_2}$ coincide outside $-\varepsilon<x<\varepsilon_1$, the model cone operators of both coincide, and the pseudodifferential
parts agree for $x>\varepsilon_1$. Write, as in  \eqref{res},
$$(c-\underline\gD_{\varepsilon_1}-\lambda)^{-1} 
= \omega_1\left( \op_M^\gamma (g_{\varepsilon_1})(\gl)+G(\gl)\right)\go_2 
+ (1-\go_1)P(1-\go_3)+G_\infty(\gl)$$
with cut-off functions   
satisfying $\go_1\go_2 = \go_1$ and $\go_1\go_3 = \go_3$.  
In view of the parametrix construction process, see Section 3.2, in particular Eq.\ (3.10),
in \cite{Sh}, we then obtain a parametrix to $c-\underline\gD_{\varepsilon}-\lambda$ by simply replacing $g_{\varepsilon_1}$ with the corresponding Mellin symbol 
$g_{\varepsilon}$ as indicated above. Hence 
$$(c-\underline\gD_{\varepsilon}-\lambda)
\left( \omega_1\left( \op_M^\gamma (g_{\varepsilon})(\gl)+G(\gl)\right)\go_2 
+ (1-\go_1)P(1-\go_3)\right)= I+ G_{\infty,\varepsilon}(\lambda) $$
for a uniformly bounded family $G_{\gve,\infty}\in C^{-\infty}_G(\B,\gL_\gt,\gg)$. 

As $G_{\gve,\infty}(\gl)\to 0$,
$(I+G_{\gve,\infty}(\gl))^{-1}$ exists  as an operator in $\cL(\cH^{0,\gg}_2(\B))$ 
for sufficiently large $\gl$ in $\gL_\gt$, say $|\gl|\ge R$, 
where $R$ is independent of $\gve$. 
Writing 
$$(I+G_{\gve,\infty}(\gl))^{-1} = I+H_\gve(\gl)\ \ \text{with }\  H_\gve(\gl) = G_{\gve,\infty}(\gl)-G_{\gve,\infty}(\gl)(I+G_{\gve,\infty}(\gl))^{-1}G_{\gve,\infty}(\gl)
$$
and noting that 
$$\partial_\gl(I+G_{\gve,\infty}(\gl))^{-1}=-(I+G_{\gve,\infty}(\gl))^{-1}\partial_\gl G_{\gve,\infty}(\gl)(I+G_{\gve,\infty}(\gl))^{-1},$$
we see that, given $k,m\in \N_0$, the seminorms for
$$\gl^k\partial_\gl^m H_\gve(\gl): \cH^{0,\gg}_2(\B) \to C^{\infty,\gg+\gd}(\B)$$
(with suitably small $\gd>0$) are uniformly bounded in $\gve$. 
A similar consideration applies to the adjoints.
We infer from \cite[Corollary 4.3]{sek} that $H_{\gve}$ has an integral kernel in $\cS(\gL_\gt\cap \{|\gl|\ge R\}, C^{\infty,\gg+\gd}(\B)\widehat\otimes_\pi C^{\infty,\gg+\gd}(\B))$ which
satisfies the corresponding estimates uniformly in $\gve$.
As $(I+G_{\gve,\infty}(\gl))^{-1}= I+H_\gve(\gl)$ and 
$$(c-\underline\gD_\gve-\gl)^{-1} 
= (\go_1 \left(x^2\op_M^\gg(g_\varepsilon)(\gl)+G(\lambda)\right)\go_2 + (1-\go_1)P(1-\go_3))(I+G_{\gve,\infty}(\gl))^{-1},
$$
the symbol seminorms for the components of the resolvent to 
$c-\underline\gD_\gve$ with respect to the decomposition in \eqref{res} are independent of the choice of $\gve$ for  $\gl\in \gL_\gt$, $|\gl|\ge R$.
Replacing $c$ by $c+R$, the estimates will hold in all of $\gL_\gt$.
The argument in \cite{CSS2} then shows that the $\mathcal{BIP}(\phi)$-bound is 
uniformly bounded in $\gve$.    
  } 
 
\begin{corollary}\label{lowerbound}
Fix $c$ and $\gve_0$  as in Lemma $\ref{invert}$, with $c$ possibly increased according to
Proposition $\ref{bipbound}$. 
By Theorem $\ref{elldomain3}$, the operators $c-\underline{\Delta}_{\varepsilon}$ belong to $\mathcal{P}(\theta)\cap\mathcal{BIP}(\phi)$, for any 
$\theta\in[0,\pi[$ and $\phi>0$.  
Since $\mathcal{H}_{p}^{s,\gamma}(\mathbb{B})$ is UMD, we deduce from 
\cite[Theorem 4]{CP} that $c-\underline{\Delta}_{\varepsilon}$ 
is $R$-sectorial with  angle $\phi$. Moreover,  Proposition $\ref{bipbound}$ 
shows that the $\mathcal{BIP}(\phi)$-bounds and hence - see the proof of \cite[Theorem 4]{CP} - the $R$-bounds for $c-\underline
\gD_\gve$ are uniformly bounded in $0<\gve<\gve_0$ for sufficiently small $\gve_0$. 
\end{corollary}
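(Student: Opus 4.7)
The strategy is to bootstrap from results already in hand: Theorem \ref{elldomain3} establishes $\mathcal{BIP}(\phi)$ for the straight-cone Laplacian, and Proposition \ref{bipbound} upgrades this to the perturbed operators $c-\underline{\Delta}_{\varepsilon}$ with constants that are uniform in $\varepsilon$. The remaining task is to pass from $\mathcal{BIP}(\phi)$ to $R$-sectoriality with an $\varepsilon$-uniform $R$-bound.

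For each fixed $\varepsilon\in(0,\varepsilon_0)$, Proposition \ref{bipbound} produces a resolvent representation of $c-\underline{\Delta}_{\varepsilon}$ in the form \eqref{res} with seminorms controlled uniformly in $\varepsilon$. The Step~2 arguments of Theorem \ref{hibip} then give sectoriality of any angle $\theta\in[0,\pi[$ on $\cH^{s,\gamma}_{p}(\mathbb{B})$, while Proposition \ref{bipbound} itself supplies $\mathcal{BIP}(\phi)$ for any prescribed $\phi>0$. Hence $c-\underline{\Delta}_{\varepsilon}\in\mathcal{P}(\theta)\cap\mathcal{BIP}(\phi)$ for every admissible $\varepsilon$, which reproduces the first two sentences of the statement.

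Next, since each space $\cH^{s,\gamma}_{p}(\mathbb{B})$ is a UMD Banach space (noted in Section \ref{MSS}), I would invoke the Cl\'ement--Pr\"uss theorem \cite[Theorem 4]{CP}: in a UMD space, an operator with $\mathcal{BIP}(\phi)$ is automatically $R$-sectorial of angle $\phi$. Applied pointwise in $\varepsilon$, this yields $R$-sectoriality of $c-\underline{\Delta}_{\varepsilon}$ at any angle $\phi>0$.

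The only delicate point, and the main obstacle, is the uniformity of the $R$-bound as $\varepsilon\to0^{+}$. For this I would inspect the proof of \cite[Theorem 4]{CP} and verify that the $R$-constant is produced quantitatively from the $\mathcal{BIP}(\phi)$-bound and the sectoriality constant $K_{\theta}$; both are controlled uniformly in $\varepsilon\in(0,\varepsilon_0)$ by Proposition \ref{bipbound} (shrinking $\varepsilon_0$ if necessary so that the sectoriality constant stays close to that of the limiting operator). Consequently the $R$-bound inherits the same uniformity, which is exactly the content of the corollary.
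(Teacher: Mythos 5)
Your proposal is correct and follows essentially the same route as the paper: the corollary is proved exactly by combining Theorem \ref{elldomain3} (sectoriality and $\mathcal{BIP}(\phi)$), the UMD property of $\cH^{s,\gamma}_p(\B)$, the Cl\'ement--Pr\"uss theorem \cite[Theorem 4]{CP} to pass to $R$-sectoriality, and an inspection of that theorem's proof to see that the $R$-bound is controlled by the $\mathcal{BIP}(\phi)$-bound, which Proposition \ref{bipbound} makes uniform in $\varepsilon$. Your extra remark about tracking the sectoriality constant $K_\theta$ is a harmless refinement of the same argument.
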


\begin{theorem}\label{RB}
For any $\theta\in[0,\pi[$ and $c>0$, 
$c-\underline{\Delta}$ is $R$-sectorial of angle $\theta$. Hence $c-\underline{\Delta}$ generates an analytic semigroup and has maximal $L^{p}$-regularity.
\end{theorem}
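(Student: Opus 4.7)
The plan is a relatively bounded perturbation argument that transfers $R$-sectoriality from the cut-off operators $\underline\gD_\gve$ to $c-\underline\gD$ for suitably large $c$, followed by a shift in the spectral parameter to reach arbitrary $c>0$. First I would fix $\gt\in[0,\pi[$ and, invoking Proposition \ref{bipbound} together with Corollary \ref{lowerbound}, choose $c_0>0$ large enough so that the family $c_0-\underline\gD_\gve$, $\gve\in(0,\gve_0)$, is $R$-sectorial of angle $\gt$ on $\cH^{s,\gg}_p(\B)$ with an $R$-bound $M$ that is uniform in $\gve$. On the common domain $\cD(\underline\gD)=\cH^{s+2,\gg+2}_p(\B)\oplus\C$ the identity $c_0-\underline\gD=(c_0-\underline\gD_\gve)-B_\gve$ holds, and combining Lemma \ref{bgve} with the lower bound from Lemma \ref{invert} shows that $B_\gve$ is a relatively bounded perturbation of $c_0-\underline\gD_\gve$ whose relative bound tends to $0$ as $\gve\to 0^+$.

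Next I would apply the Kunstmann--Weis perturbation theorem for $R$-sectoriality \cite{KL}: choosing $\gve$ small enough so that the relative bound of $B_\gve$ is strictly less than $1/M$ yields $R$-sectoriality of $c_0-\underline\gD$ of angle $\gt$. The uniform control of $M$ in $\gve$ is exactly what makes this step work and is the main technical obstacle; it is precisely what Proposition \ref{bipbound} delivers. Without such uniformity one could not absorb $B_\gve$, since its relative bound only becomes small through $\gve$ while $M$ could a priori blow up in the same limit.

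Finally I would extend from $c_0$ to an arbitrary $c>0$ using the identity $c-\underline\gD=(c_0-\underline\gD)+(c-c_0)I$, a bounded scalar perturbation of an $R$-sectorial operator. By Theorem \ref{si}, $\sigma(\underline\gD)\subseteq\R_{\le 0}$, so the sector $c+S_\gt$ lies in $\rho(\underline\gD)$ for every $c>0$ and $\gt\in[0,\pi[$. A standard resolvent-identity argument combining the $R$-boundedness of $\{\gl(\gl+c_0-\underline\gD)^{-1}:\gl\in S_\gt\}$ from the previous step with the uniform operator-norm bound of $(\gl+c-\underline\gD)^{-1}$ on $S_\gt$ then yields $R$-sectoriality of $c-\underline\gD$ of angle $\gt$. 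Since $\cH^{s,\gg}_p(\B)$ is a UMD space and $\gt\in[0,\pi[$ may be chosen greater than $\pi/2$, Weis's theorem provides maximal $L^p$-regularity, and the generation of an analytic semigroup follows from sectoriality of angle greater than $\pi/2$.
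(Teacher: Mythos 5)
Your proposal is correct and follows essentially the same route as the paper: uniform $R$-bounds for $c-\underline\Delta_\varepsilon$ from Proposition \ref{bipbound} and Corollary \ref{lowerbound}, absorption of $B_\varepsilon$ via Lemma \ref{bgve}, Lemma \ref{invert} and the Kunstmann--Weis perturbation theorem, and then passage to arbitrary $c>0$ using that the spectrum lies in $\R_{\le 0}$ and $\theta$ is arbitrary. Your final step is merely spelled out in slightly more detail than the paper's one-line remark, but the argument is the same.
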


\Proof{We first assume that $c>0$ is sufficiently large and infer from \eqref{lower} and Lemma $\ref{bgve}$ that 
\begin{gather}\label{ee4}
\|B_{\varepsilon}u\|_{\mathcal{H}_{p}^{s,\gamma}(\mathbb{B})}\leq c_{\varepsilon}\|u\|_{\mathcal{D}(\underline{\Delta})}\leq \frac{c_{\varepsilon}}{\delta}\|(c-\underline{\Delta}_{\varepsilon})u\|_{\mathcal{H}_{p}^{s,\gamma}(\mathbb{B})},
\end{gather} 
where the constant $c_{\varepsilon}>0$ can be made arbitrarily small by taking $\varepsilon$ sufficiently close to zero. 
In particular, it can be made smaller than the inverse of the supremum of the $R$-bounds of the operators $c-\underline\gD_\gve$, cf.\ Corollary \ref{lowerbound}. From \cite[Theorem 1]{KL}, we infer that $c-\underline\gD$ is $R$-sectorial of angle $\gt$ for large $c>0$.

On the other hand, since the resolvent exists outside $\R_{\le0}$ and $\gt\in[0,\pi[$ is arbitrary, we obtain the $R$-sectoriality for all $c>0$.  
}

As in Section \ref{straight} we now study the linearized equation with $g_0$ replaced by $g$
and $\underline\gD_0$ by $\underline\gD$. 
We use Theorem \ref{RB} to infer maximal regularity for the operator
$A(v)$ given by 
$$A(v)u = \underline\gD^2u + \underline\gD u -3v^2 \underline\gD u ,$$
cf.\  \eqref{defA}:

\begin{proposition}\label{RBPA} 
Let $v\in \cH^{s+(n+1)/p+\gve,(n+1)/2}_p(\B)\oplus\C$ for some 
$\gve>0$. 
For every $\theta\in[0,\pi[$ the operator $A(v) +c_0I$
is $R$-sectorial of angle $\theta$,  provided $c_0>0$ is sufficiently large. 
Hence $A(v)$ has maximal $L^{q}$-regularity, $1<q<\infty$.
\end{proposition}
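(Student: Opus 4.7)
The plan is to parallel the argument of Proposition \ref{p1}, substituting bounded imaginary powers and Dore--Venni by the $R$-sectoriality of Theorem \ref{RB} and the Kunstmann--Weis perturbation theorem \cite[Theorem 1]{KL}. First I would set $B = c - \underline{\Delta}$ for some $c > 0$ to be fixed; by Theorem \ref{RB}, $B$ is $R$-sectorial of any angle $\theta' \in [0,\pi[$ on $\cH^{s,\gg}_p(\B)$. The elementary identity
\begin{equation*}
(B^2 + \mu^2)^{-1} = (B + i\mu)^{-1}(B - i\mu)^{-1}
\end{equation*}
combined with the fact that products of $R$-bounded families are $R$-bounded (an $R$-sectorial analog of Lemma \ref{l1}) then shows that $B^2$ is $R$-sectorial of every angle in $[0,\pi[$, with domain $\cD(B^2) = \cD(\underline{\Delta}^2)$, coinciding with the intended domain of $A(v)$.

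An algebraic rearrangement gives
\begin{equation*}
A(v) + c_0 I = B^2 + M\,\underline{\Delta} + N,
\end{equation*}
where $M$ is multiplication by $3v^2 - 2c - 1$ and $N$ is multiplication by $c_0 + c^2 + c - 3cv^2$. Since $s+(n+1)/p+\gve > (n+1)/p$ and $(n+1)/2 \ge (n+1)/2$, Corollary \ref{c0} shows that $\cH^{s+(n+1)/p+\gve,(n+1)/2}_p(\B) \oplus \C$ is a Banach algebra containing $v$ and hence $v^2$; by Corollary \ref{c2}, $M$ and $N$ then define bounded multiplication operators on $\cH^{s,\gg}_p(\B)$. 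Combining the standard sectorial interpolation inequality
\begin{equation*}
\|Bu\|_{\cH^{s,\gg}_p(\B)} \le \eta\|B^2 u\|_{\cH^{s,\gg}_p(\B)} + C_\eta \|u\|_{\cH^{s,\gg}_p(\B)}, \qquad u \in \cD(B^2),
\end{equation*}
with the identity $\underline{\Delta} = c - B$ yields
\begin{equation*}
\|M\underline{\Delta} u + Nu\|_{\cH^{s,\gg}_p(\B)} \le a\|B^2 u\|_{\cH^{s,\gg}_p(\B)} + b\|u\|_{\cH^{s,\gg}_p(\B)},
\end{equation*}
with $a$ as small as desired (at the cost of enlarging $b$). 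Once $a$ is forced below the reciprocal of the $R$-bound of $B^2$, \cite[Theorem 1]{KL} delivers $R$-sectoriality of $B^2 + M\underline{\Delta}$ of angle $\theta$, and enlarging $c_0$ absorbs the $b\|u\|$ term, producing the claim for $A(v) + c_0 I$. Maximal $L^q$-regularity then follows from Weis's theorem, since $\cH^{s,\gg}_p(\B)$ is UMD and the angle exceeds $\pi/2$.

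The main obstacle I anticipate is the quantitative matching between $a$ and the $R$-bound of $B^2$, which depends on the target angle $\theta$ and may deteriorate as $\theta \uparrow \pi$. This is handled by fixing $\theta$ first, reading off the resulting $R$-bound for $B^2$, then choosing $\eta$ small enough to make the perturbation inequality compatible, and finally selecting $c_0$ large enough to absorb the zeroth-order contribution and shift the sector off the origin. The structure of the argument is essentially the same as in the straight cone case; what is genuinely new is only the use of $R$-sectorial perturbation in place of the Dore--Venni calculus.
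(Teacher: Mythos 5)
Your proposal is correct and follows essentially the same route as the paper: $R$-sectoriality of $(c-\underline{\Delta})^2$ via the factorization through $(c-\underline{\Delta}\pm i\sqrt{\lambda})^{-1}$ and products of $R$-bounded families, followed by the Kunstmann--Weis perturbation theorem for the first-order term; the only mechanical difference is that the paper produces the small relative bound by first passing to $(c-\underline{\Delta})^2+\mu$ and estimating $\|f\underline{\Delta}((c-\underline{\Delta})^2+\mu)^{-1}\|$ directly for large $\mu$, whereas you invoke the abstract moment inequality $\|Bu\|\le\eta\|B^2u\|+C_\eta\|u\|$, which works equally well. Apart from harmless bookkeeping slips in the coefficients (one finds $M=2c+1-3v^2$ and $N=c_0-c^2$), the argument is sound.
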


\begin{proof}
We write
\begin{gather}\label{e12a}
\lambda\big((c-\underline{\Delta})^{2}+\lambda\big)^{-1}
=(i\sqrt{\lambda})(c-\underline{\Delta}+i\sqrt{\lambda})^{-1}
(-i\sqrt{\lambda})(c-\underline{\Delta}-i\sqrt{\lambda})^{-1}.
\end{gather}
As $\arg (\pm i\sqrt\gl) = \frac12\arg\gl \pm\frac12\pi$, this implies that  $(c-\underline{\Delta})^{2}\in\mathcal{P}(\theta)$ for any $\theta\in[0,\pi[$, 
whenever $c>0$ is large enough. Similarly,
$\pm i\sqrt{\lambda}$ will lie in the domain of $R$-boundedness of 
$z(c-\underline{\Delta}+z)^{-1}$. 
Thus, both families 
\begin{gather*}
(i\sqrt{\lambda})(c-\underline{\Delta}+i\sqrt{\lambda})^{-1} \text{ and }\ 
(-i\sqrt{\lambda})(c-\underline{\Delta}-i\sqrt{\lambda})^{-1}
\end{gather*}
are $R$-bounded.  
By the definition of $R$-boundedness, we deduce from \eqref{e12a} that 
$(c-\underline{\Delta})^{2}$ is $R$-sectorial with angle $\gt$.

Next let $\mu>0$. 
The sectoriality of $c-\underline{\Delta}$ implies that the norm of the bounded operator 
$\mu(c-\underline{\Delta})^{-2}$ in $\cL(\cH^{s,\gg}_p(\B))$ 
will become arbitrarily small, provided $c>0$ is chosen sufficiently large. 
Given any $\ga>0$ we will have 
$$\|\mu x\|_{\cH^{s,\gg}_p(\B)} <
\alpha\|(c-\underline{\Delta})^{2}x\|_{\cH^{s,\gg}_p(\B)}, 
\quad x\in \cD((c-\underline\gD)^2),$$
for sufficiently large $c>0$.
Thus, by Theorem 1 in \cite{KL}, $(c-\underline{\Delta})^{2}+\mu$ is $R$-sectorial with angle $\gt$ for large  $c>0$.

For any $f\in \cH^{s+(n+1)/p+\gve,(n+1)/2}_p(\B)\oplus\C$ 
we have (with norms taken in $\cL(\cH^{s,\gg}_p(\B))$)
\begin{eqnarray*}
\lefteqn{\|f\underline{\Delta}\big((c-\underline{\Delta})^{2} +\mu\big)^{-1}\|
=\|f\underline{\Delta}(c-\underline{\Delta}+i\sqrt{\mu})^{-1}
(c-\underline{\Delta}-i\sqrt{\mu})^{-1}\|}\\
&\leq&\|M_f\|\|(c-\underline{\Delta}+i\sqrt{\mu}-i\sqrt{\mu}-c)
(c-\underline{\Delta}+i\sqrt{\mu})^{-1}\|
\|(c-\underline{\Delta}-i\sqrt{\mu})^{-1}\|\\
&=&\|M_f\|\|I-(i\sqrt{\mu}+c)(c-\underline{\Delta}+i\sqrt{\mu})^{-1}\|
\|(c-\underline{\Delta}-i\sqrt{\mu})^{-1}\|.
\end{eqnarray*}
Here, we have written $M_f$ for the operator of multiplication by $f$. 
By Corollary \ref{c2}, it is bounded on $\cH^{s,\gg}_p(\B)$. 

We can make the last term in the above inequality arbitrarily small by taking $\mu$ large.
Hence, given an arbitrary $\ga>0$, we can take $c>0$ so large that
$$\|f\underline{\Delta}x\|<\alpha\|\big((c-\underline{\Delta})^{2}+\mu\big)x\|, \quad
x\in\mathcal{D}((c-\underline{\Delta})^{2}).$$  
Another application of  \cite[Theorem 1]{KL} then furnishes that 
$$\underline \gD^2 +(f-2c)\underline \gD + (c^2+\mu)$$ is $R$-sectorial with angle $\gt$ for
$c$ and $\mu$ large. 
Given $v\in  \cH^{s+(n+1)/p+\gve,(n+1)/2}_p(\B)\oplus\C$, the assertion follows for $f=2c+1-3v^2$, 
noting that the latter space is an algebra by Corollary \ref{c0}.
\end{proof}

\subsection{The domain of $\underline\gD^2$} 

Let $\cD(\underline\gD)$ be as in \eqref{d2} and
$$\cD(\underline\gD^2) = 
\{ u\in \cD(\underline\gD): \gD u \in \cD(\underline\gD)\}.$$
In view of \eqref{Dmax}, it is of the form
\begin{eqnarray*} 
\cD(\underline \gD^2) = 
\cD(\underline\gD_{\min,s}^2)\oplus \bigoplus \cF_\rho \oplus \C
\end{eqnarray*}
with $\cD(\underline\gD_{\min,s}^2) \subseteq \cH^{s+4,\gg+4-\gve}_p(\B)$ for every $\gve>0$,
and asymptotics spaces $\cF_\rho$, which can be determined explicitly from the
conormal symbol of $\gD^2$ as well as  the metric $h$ in \eqref{Delta} and its derivatives at $x=0$. 
For details see e.g.\ \cite[Section 6]{GKM},  \cite[Section 2.3]{Sh} or \cite{se}. 
Here it suffices to say that $\rho$ varies over the non-invertibility points of the conormal
symbol of $\gD^2$ in $](n+1)/2-\gg-4, (n+1)/2-\gg[$, 
and the elements in each $\cF_\rho$ are suitable linear combinations of 
functions of the form $x^{-\rho+j}\log^kx\go(x)e(y)$, where $j\in\{0,1,2,3\}$, $k\in\{0,1\}$ 
and $e\in C^\infty(\partial \B)$.  
The space $\C$ has been listed separately, because, as $\cD(\underline\gD^2)\subseteq 
\cD(\underline\gD)$, we may conclude that 
the $\cF_\rho$ are subsets
of $\cH^{s+2,\gg+2}_p(\B)$. 
In particular,  $\cD(\underline\gD^2) \subseteq \cH^{s+4,(n+1)/2-\gve}_p(\B)$ for every $\gve>0$.

\subsection{Short time solutions of the Cahn-Hilliard equation}\label{5.4}

In order to determine the interpolation space $X_q = (X_1,X_0)_{1/q,q}$ between 
$X_0 = \cH^{s,\gg}_p(\B)$ and $X_1 = \cD(\underline\gD^2)$ we invoke the following result
of Haase \cite[Corollary 7.3]{Haase}:
\begin{theorem}\label{HInt}
Let $A$ be a sectorial operator  on the Banach space $X$, 
and let $\ga,\gb,\gg\in \C$ with $ 0<\Re\gg<\Re\gb\le\Re\ga$, $\gs\in(0,1)$, 
$q\in[1,\infty]$,and $x\in X$. Then 
$$x \in (X,\cD(A^\ga))_{\gt,q} \Rightarrow x \in \cD(A^\gg) \text{ and } 
A^\gg x \in (X,\cD(A^{\gb-\gg}))_{\gs,q},$$
where $\gt=(1-\gs)\frac{\Re\gg}{\Re \ga} +\gs\frac{\Re\gb}{\Re \ga}$.
Moreover, 
\begin{eqnarray}\label{HReit}
(X,\cD(A^\ga))_{\gt,q} = (\cD(A^\gg),\cD(A^\gb))_{\gs,q}.
\end{eqnarray}
\end{theorem}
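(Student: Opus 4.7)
The plan is to reduce the theorem to the classical reiteration theorem for real interpolation applied to the scale of fractional-power domains of $A$. Throughout, I rely on the facts that for a sectorial operator $A$ the complex powers $A^z$ are defined (for $\Re z>0$) via a Dunford integral, extend by closure to arbitrary $z\in\C$, satisfy the composition law $A^{z_1}A^{z_2}=A^{z_1+z_2}$ on appropriate domains, and that $A^\gamma:\cD(A^{s+\gamma})\to\cD(A^s)$ is an isomorphism for every $\Re s\ge 0$ once the $\cD(A^s)$ are equipped with the graph norm (cf.\ Amann \cite{Am}, III.4).

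The first and main step is the reiteration identity \eqref{HReit}. The idea is to prove that, relative to the couple $(X,\cD(A^\alpha))$, both intermediate spaces $\cD(A^\gamma)$ and $\cD(A^\beta)$ are simultaneously of class $\mathcal{J}$ and $\mathcal{K}$ with indices $\Re\gamma/\Re\alpha$ and $\Re\beta/\Re\alpha$ respectively. The $\mathcal{J}$-part reduces to a moment inequality of the form
\[
\|A^\gamma x\|_X \le C\,\|x\|_X^{\,1-\Re\gamma/\Re\alpha}\,\|A^\alpha x\|_X^{\,\Re\gamma/\Re\alpha},\qquad x\in\cD(A^\alpha),
\]
together with its counterpart for $\beta$. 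The $\mathcal{K}$-part amounts to the estimate
\[
\|A^\gamma x\|_X \le C\,t^{-\Re\gamma/\Re\alpha}\,K\bigl(t,x;X,\cD(A^\alpha)\bigr),\qquad t>0,
\]
and again the analogue for $\beta$. Both families of estimates are obtained by writing $A^\gamma$ and $A^\beta$ as Dunford integrals against the resolvent of $A$ and splitting the contour at the appropriate radius, using the sectorial resolvent bound $(1+|\gl|)\|(\gl+A)^{-1}\|\le K_\gt$. Once $\cD(A^\gamma)$ and $\cD(A^\beta)$ are shown to lie simultaneously in $\mathcal{J}$ and $\mathcal{K}$ with the correct indices, the classical reiteration theorem of Bergh and L\"ofstr\"om yields
\[
(X,\cD(A^\alpha))_{\gt,q}=(\cD(A^\gamma),\cD(A^\beta))_{\gs,q},
\]
with $\gt=(1-\gs)\Re\gamma/\Re\alpha+\gs\Re\beta/\Re\alpha$, which is precisely \eqref{HReit}.

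Given the reiteration identity, the mapping statement is a one-line application of the interpolation functor. Indeed, by the composition law, $A^\gamma$ is a bounded operator $\cD(A^\gamma)\to X$ as well as $\cD(A^\beta)\to\cD(A^{\beta-\gamma})$. The real interpolation functor $(\cdot,\cdot)_{\gs,q}$ therefore produces a bounded map
\[
A^\gamma:(\cD(A^\gamma),\cD(A^\beta))_{\gs,q}\longrightarrow (X,\cD(A^{\beta-\gamma}))_{\gs,q},
\]
which combined with \eqref{HReit} gives the desired conclusion; the embedding $(\cD(A^\gamma),\cD(A^\beta))_{\gs,q}\hookrightarrow\cD(A^\gamma)$, which in particular shows that $x\in\cD(A^\gamma)$, is automatic.

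The technical heart, and the main obstacle, is the simultaneous verification of the $\mathcal{J}$- and $\mathcal{K}$-class estimates. The bookkeeping with the Dunford contour, especially when $\gamma$, $\beta$, $\alpha$ are allowed to be complex, is delicate: one has to control factors of the form $\gl^\gamma(1+\gl)^{-\alpha}$ uniformly along rays in the sector, and balance the contour split so that both the $X$-part and the $\cD(A^\alpha)$-part of a given decomposition of $x$ are seen by the powers. Everything else (the functor argument and the identification with $(X,\cD(A^{\beta-\gamma}))_{\gs,q}$) is routine once the intermediate spaces have been shown to be of the correct interpolation class.
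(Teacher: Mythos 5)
You should first be aware that the paper does not prove this statement at all: Theorem \ref{HInt} is quoted verbatim from Haase \cite{Haase} (Corollary 7.3), so there is no internal proof to compare with. Haase's argument rests on a functional-calculus (Komatsu-type) description of the real interpolation spaces $(X,\cD(A^{\alpha}))_{\theta,q}$ in terms of quantities like $\bigl(\int_0^\infty\|t^{-\theta\Re\alpha}\psi(tA)x\|^q\,\tfrac{dt}{t}\bigr)^{1/q}$, which shows in particular that these spaces depend only on $\theta\Re\alpha$; the reiteration identity \eqref{HReit} and the mapping property are then read off from that description. Your route through the classical $\mathcal{J}/\mathcal{K}$ reiteration theorem is genuinely different. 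For \emph{real} exponents -- which is all the paper actually uses in Section \ref{5.4}, where $\alpha=2$ and $1<\gamma<\beta\le2$ are real -- it is sound: the moment inequality gives the $\mathcal{J}$-class, a decomposition $x=(I-(1+sA)^{-n})x+(1+sA)^{-n}x$ with $s=t^{1/\Re\alpha}$ gives the $\mathcal{K}$-class, and the second assertion does follow from \eqref{HReit} by interpolating $A^{\gamma}:\cD(A^{\gamma})\to X$ and $A^{\gamma}:\cD(A^{\beta})\to\cD(A^{\beta-\gamma})$ exactly as you say.

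For the theorem as stated, however -- complex exponents with only $\Re\gamma<\Re\beta\le\Re\alpha$ -- there is a genuine gap. If $\Re\beta=\Re\alpha$ but $\beta\neq\alpha$, your scheme needs $\cD(A^{\beta})$ to be of class $\mathcal{K}(1)$ relative to $(X,\cD(A^{\alpha}))$, i.e. $K(t,x;X,\cD(A^{\alpha}))\le Ct\,\|x\|_{\cD(A^{\beta})}$ for all $t>0$; letting $t\to0$ this forces $\cD(A^{\beta})$ to embed into (the Gagliardo completion of) $\cD(A^{\alpha})$. For a general sectorial operator the purely imaginary power $A^{\beta-\alpha}$ is unbounded, so $\cD(A^{\beta})$ and $\cD(A^{\alpha})$ are in general incomparable and the $\mathcal{K}(1)$-estimate fails; this is precisely why the result is nontrivial and why Haase proves it through the functional-calculus characterization -- the interpolation spaces see only the real parts of the exponents even though the fractional domains themselves do not. (For $\Re\beta<\Re\alpha$ strictly your estimates do survive complex exponents, since $\|A^{\mu}(1+sA)^{-n}\|\le Cs^{-\Re\mu}$ for $0<\Re\mu<n$, but this should be said.) Finally, a slip in the write-up: the displayed ``$\mathcal{K}$-part'' inequality $\|A^{\gamma}x\|\le Ct^{-\Re\gamma/\Re\alpha}K(t,x;X,\cD(A^{\alpha}))$ is backwards -- taken literally for all $t>0$ it forces $A^{\gamma}x=0$ as $t\to\infty$; what the reiteration theorem requires is $K(t,x;X,\cD(A^{\alpha}))\le Ct^{\Re\gamma/\Re\alpha}\|x\|_{\cD(A^{\gamma})}$.
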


For $q>2$ we apply the above theorem for $A=\underline \gD$ on $X_0$ 
with $\ga=2$, $\gs=1/2$ and $1<\gg<\gb\le2$
so small that the resulting $\gt$ satisfies  $\frac12<\gt\le1-\frac1q$.
We conclude that 
\begin{eqnarray}\label{Hin}
X_q\hookrightarrow (X_0, \cD(\underline\gD^2))_{\gt,q}\hookrightarrow 
(\cD(\underline\gD^\gg), \cD(\underline\gD^\gb))_{\gs,q} \hookrightarrow
\cD(\underline\gD^\gg)\hookrightarrow \cD(\underline\gD)= \cH^{s+2,\gg+2}_p(\B)
\oplus \C.
\end{eqnarray}


We then obtain a complete analog of Theorem \ref{hireg}:
\begin{theorem}\label{hireg2}
Let $s\ge0$ and  $\underline \gD$ as explained in Section $\ref{choice}$. 
Choose $p\geq n+1$ and $q>2$.
Given any $u_0\in X_q$, there exists a $T>0$ and a 
unique solution in
$$u\in L^q(0,T; \cD(\underline\gD^2))\cap W^{1}_q(0,T; \cH^{s,\gg}_p(\B))
\cap C([0,T],X_q),$$
solving the Cahn-Hilliard equation \eqref{CH1} on $]0,T[$ with initial condition $u(0)=u_0$. 
\end{theorem}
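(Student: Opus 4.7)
The plan is to mirror exactly the proof of Theorem \ref{hireg} in the straight-cone setting, replacing $\underline \gD_0$ by $\underline \gD$ and $g_0$ by $g$ throughout. I would rewrite the Cahn-Hilliard equation as the quasilinear Cauchy problem $\partial_t u + A(u) u = F(u)$, $u(0) = u_0$, with
\[
A(v) u = \underline\gD^2 u + \underline\gD u - 3 v^2 \underline\gD u, \qquad F(u) = -6\, u (\nabla u, \nabla u)_g,
\]
and apply the Cl\'ement--Li Theorem \ref{CL}. The role of Proposition \ref{p1} is now taken by Proposition \ref{RBPA}, which, for any fixed $v \in \cH^{s+(n+1)/p+\gve,(n+1)/2}_p(\B)\oplus \C$, supplies maximal $L^q$-regularity of $A(v) + c_0 I$ for a sufficiently large $c_0 > 0$. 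The hypothesis on $v$ is fulfilled by the initial datum $u_0$ thanks to the embedding \eqref{Hin}, namely $X_q \hookrightarrow \cD(\underline\gD) = \cH^{s+2,\gg+2}_p(\B) \oplus \C$, together with the choice $p \ge n+1$ and the bound $\gg + 2 \ge (n+1)/2$ obtained from \eqref{fixgamma}.

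To verify (H1), I would fix a bounded neighborhood $U$ of $u_0$ in $X_q$ and use once more that, by \eqref{Hin}, $X_q \hookrightarrow \cD(\underline\gD)$. Corollary \ref{c0} shows that $\cH^{s+2,(n+1)/2}_p(\B)$ is a Banach algebra under the standing assumptions $s \ge 0$, $p \ge n+1$, and Corollary \ref{c2} gives that multiplication by any element of this space is bounded on $\cH^{s,\gg}_p(\B)$. Exactly as in the straight-cone proof, these two facts yield
\[
\|A(u_1) - A(u_2)\|_{\cL(X_1, X_0)} = 3 \|(u_1^2 - u_2^2) \underline\gD\|_{\cL(X_1, X_0)} \le c\, \|u_1 - u_2\|_{X_q}, \qquad u_1, u_2 \in U.
\]

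For (H2), the warped metric $g$ contributes $x$-dependent coefficients to
\[
(\nabla u, \nabla v)_g = \frac{1}{x^2}\Big((x\partial_x u)(x\partial_x v) + \sum h^{ij}(x)(\partial_{y^i} u)(\partial_{y^j} v)\Big),
\]
but the $h^{ij}(x)$ are smooth up to $x=0$ and hence act as bounded multipliers on every Mellin-Sobolev space, so they are harmless. With $\gt = (n+1)/2 - \gg - 1 > 0$ (which is positive by \eqref{fixgamma}), one then runs through the chain of Lipschitz estimates \eqref{F} verbatim, checking that $\tilde x^\gt \nabla u$ lies in the Banach algebra $\cH^{s+1+\gve,(n+1)/2}_p(\B)$ and that $\tilde x^{-2\gt} u \in \cH^{s,\gg}_p(\B)$. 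The trilinear bound $\|F(u_1) - F(u_2)\|_{X_0} \le c(\|u_1\|_{X_q}^2 + \|u_1\|_{X_q}\|u_2\|_{X_q}+ \|u_2\|_{X_q}^2)\|u_1 - u_2\|_{X_q}$ then follows exactly as before, giving Lipschitz continuity of $F$ on $U$. An application of Theorem \ref{CL} produces the desired short-time solution in the required regularity class.

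The main obstacle I expect is the verification of (H2): one needs just a little more regularity of $\nabla u$ than the inclusion $X_q \hookrightarrow \cD(\underline\gD)$ provides, so as to secure the extra $\gve$ of smoothness needed for the Banach algebra property. In the straight-cone case this was supplied by the bounded imaginary powers of $\underline\gD_0$, which are not available here. The remedy is to invoke Haase's reiteration identity \eqref{HReit}, which gives $X_q \hookrightarrow \cD(\underline\gD^\eta)$ for some $\eta > 1$ and hence yields precisely the required extra $\gve > 0$ of Mellin-Sobolev regularity. Once this point is handled, the rest is a routine adaptation of the straight-cone argument.
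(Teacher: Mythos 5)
Your proposal is correct and follows essentially the same route as the paper: Proposition \ref{RBPA} supplies maximal regularity, the embedding \eqref{Hin} obtained from Haase's Theorem \ref{HInt} replaces the BIP-based interpolation argument of the straight-cone case, and (H1), (H2) are then checked exactly as in Theorem \ref{hireg}. You correctly pinpoint the only delicate point---securing the extra $\gve$ of Mellin--Sobolev regularity needed for (H2) without bounded imaginary powers of $\underline\gD$---and resolve it the same way the paper does, via the reiteration identity \eqref{HReit} giving $X_q\hookrightarrow \cD(\underline\gD^{\eta})$ for some $\eta>1$.
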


\begin{proof} By Proposition \ref{RBPA}, $A(u_0)$ has maximal regularity. 
It remains to establish properties (H1) and (H2). 
 
As $p\ge n+1$ and $\gg+2>\frac{n+1}2$, $\cD(\underline\gD)$ is an algebra by Corollary \ref{c1}.
Moreover, \eqref{Hin} implies that $X_q$ embeds into $\cH^{s+(n+1)/p+\gve,(n+1)/2}_p(\B)\oplus \C$ for some $\gve>0$.
By Corollary \ref{c2}, multiplication by functions in $X_q$ defines continuous operators on $\cH^{s,\gg}_p(\B)$.
With this information, (H1) and (H2) can be verified as in the proof of Theorem \ref{hireg}.  
\end{proof}  

\subsection*{Acknowledgment} The authors thank G.\ Mendoza and J.\ Seiler for valuable 
discussions and the referee for very helpful comments which led to an improvement in 
Theorem \ref{hireg2}.


\end{document}